\documentclass[reqno]{amsart} 

\usepackage{amsmath} 
\usepackage{amssymb}  
\usepackage{amsthm}   
\usepackage{amsfonts}
\usepackage{mathtools}
\usepackage{latexsym}
\usepackage{yhmath}    

\usepackage[textwidth=15cm, centering]{geometry} 
\usepackage{setspace}                            
\usepackage{ragged2e}                            
\usepackage{enumitem}                            

\usepackage{graphicx}
\usepackage[dvipsnames]{xcolor}

\usepackage[english]{babel}
\usepackage{csquotes}  

\usepackage[colorlinks=true, citecolor=blue, urlcolor=cyan, linkcolor=blue]{hyperref}

\usepackage{booktabs} 

\numberwithin{equation}{section} 

\newcommand{\parentesis}[1]{\left( #1 \right)}
\newcommand{\corchetes}[1]{\left [ #1 \right ]}
\newcommand{\llaves}[1]{\left \{ #1 \right \}}
 
\newcommand{\abs}[1]{\left \lvert #1 \right \rvert}
\newcommand{\norm}[1]{\left \lVert #1 \right \rVert}
\newcommand{\escalar}[1]{ \big \langle #1 \big \rangle}

\newcommand{\grad}{\nabla}

\newcommand{\fourier}[1]{\widehat{#1}}

\newcommand{\N}{\mathbb{N}}

\newcommand{\RR}{\mathbb{R}}
\newcommand{\RRn}{\mathbb{R}^{n}}

\newtheoremstyle{estiloTeoremas}
  {3pt}                
  {3pt}                
  {\itshape}           
  {}                   
  {\bfseries}          
  {.}                  
  {.5em}               
  {}                   

\newtheoremstyle{estiloNotas}
  {3pt}                
  {3pt}                
  {\upshape}           
  {}                   
  {\bfseries}          
  {.}                  
  {.5em}               
  {}                   

 \theoremstyle{estiloTeoremas}
 \newtheorem{theorem}{Theorem} [section]
 \newtheorem{lemma}[theorem]{Lemma}

 \theoremstyle{estiloNotas}
\newtheorem{definition}[theorem]{Definition}

\newtheorem{remark}{Remark} 

\title[Stability of Stationary solutions to DPME]{Asymptotic stability of Stationary solutions to 3D incompressible flow in porous media with diffusion}

\author[J.S. Ángel-Echeverry]{Juan Sebastián Ángel-Echeverry}
\address[J.S. Ángel-Echeverry]{Instituto de Matem\'atica. Universidade Federal do Rio de Janeiro, CEP 21941-909, Rio de Janeiro - RJ, Brazil}
\email{jsangele@ufrj.br}
\thanks{The author was supported by FAPERJ Nota 10 grant 203.710/2025, and CAPES PROEX grant 88887.901672/2023-00}

\keywords{Fluid Dynamics, Complex Fluids, Darcy's Law}

\begin{document}

\begin{abstract}
 Under appropiate hypotheses on the external force acting on an incompressible flow diffusing through a porous medium, we show that there is a unique stationary solution to the diffusive porous media equation. Moreover, we show that this solution is asymptotically stable and estimate the decay rate of any perturbation towards this steady state.
\end{abstract}

\maketitle

\section{Introduction}

We are interested in investigating the asymptotic behavior of  stationary solutions to the Diffusive Porous Media Equation,  or DPME,  on $\RR^3$,

\begin{equation} \label{Parabolica}
            \begin{cases} 
         &\partial_{t} \theta+ V_\theta\cdot\nabla \theta = \Delta \theta + f, \\
         &V_\theta= -(\nabla P_{\theta} + \theta \ e_3),\\
         &\nabla \cdot V_\theta = 0,\\
         &\theta(x,0)= \theta_0(x),
            \end{cases}
 \end{equation}
where $\theta$ is flow temperature, $V_{\theta}$ is flow velocity and  $f$ is an external force. This type of equation models phenomena such as groundwater flows and oil recovery processes, see Whitaker \cite{whitaker1986}. 

The first studies regarding flow in porous media were performed by Henry Darcy in the mid 1850s as part of his studies in Hydrology. He discovered that there was a proportional relation between flow rate and the applied pressure gradient, which can be expressed as follows

\begin{equation*}
V_{\theta}= - \kappa(\nabla P_{\theta} + g \theta \, e_3 ),
\end{equation*}
where $\kappa$ is the matrix of medium permeabilities in the different directions divided by the viscosity. Here  $g$ is the acceleration due to gravity and  $e_3 = (0,0,1)$.  In our study, we choose $g= 1, \kappa = Id$. Regarding heat transfer, $\Delta \theta$ represents the net rate of heat conduction, while $ V_{\theta} \cdot \nabla \theta$ represents the net rate of change of temperature due to convection. For further details on the physical behavior of this phenomenon see  Bejan and Nield \cite{nield}

Different aspects of equation \eqref{Parabolica} have been studied. Castro, Córdoba, Gancedo and Orive \cite{Castro_2009} established the existence of strong and weak solutions to the equation with general fractional diffusion $\Lambda^{\alpha} = (- \Delta)^{\frac{\alpha}{2}}$, obtained the decay of solution in $L^{p}$ spaces for $p \geq 2$ and investigated their asymptotic behavior; also they proved the existence of attractors. Meanwhile, Niche and Orive \cite{Niche-Orive} extended the results of Castro, Córdoba, Gancedo and Orive \cite{Castro_2009} regarding the decay of the $L^2$ norm of weak solutions to \eqref{Parabolica}. They also obtained the decay of the $L^p$ norm of the solution and its derivatives for $1 \leq p \leq \infty$, as well as the first order decay of the solutions. Xue \cite{Xue_2009} proved, for $0< \alpha < 1$, global well-posedness for small initial data in a given Besov space. For other results about \eqref{Parabolica} see  Yuan and Yuan \cite{Yuan_Yuan_2009}; Yamazaki \cite{Yamazaki_2011, Yamazaki_2015}; Bie, Wang, and Yao \cite{Wang_Bie_Yao_2016}; Guo and Haifeng \cite{Guo_Haifeng_2019}; Yu, He, Tong and Wu  \cite{Yu_He_Tong_Wu_2013}; and Wu, Yu and Tang \cite{Wu_Yu_Tang_2016}. Regarding the inviscid case, see  Córdoba, Gancedo and Orive \cite{Cordoba_Gancedo_Orive_2007} and Yu and He \cite{Yu_He_2014}.

Our first main result concerns the   existence and uniqueness of stationary solutions to 

\begin{equation} \label{Estacionaria}
     \begin{cases} 
         &V\cdot\nabla \theta_{st} = \Delta  \theta_{st} + f, \\
         &V= -(\nabla P_{ \theta_{st}} +  \theta_{st} \ e_3),\\
         &\nabla \cdot V = 0,
     \end{cases}
\end{equation}
under the hypothesis that the external force $f$ has finite $L^2$ energy  and certain behavior at the small frequencies level. We recall that the  decay character $r^{\ast} = r^{\ast} (g)$ of  $g \in L^{2}(\RR^3)$ is a real number that describes the behavior of  $g$ near the origin in  frequency space, namely $r^{\ast} (g) = r^{\ast}$ provided $|\widehat{g} (\xi)| \sim |\xi|^{r^{\ast}}$ near $\xi = 0$,  for details see Section \ref{decay-character}.

\begin{theorem}  \label{existenciaestacionaria}
Let $f \in L^2(\RR^3)\cap \dot{H}^{-1}(\RR^3)$, with decay character $\gamma^{*} = \gamma^{*} (f) > 2$. Let

\begin{displaymath}
    \norm{f}_X = \max \left\{ \Vert f \Vert _{L^2}, \Vert f \Vert _{\dot{H}^{-1}} \right\}.
\end{displaymath}
Then, there exists a constant $\mu_{\gamma^{*}} > 0$ such that for any $M > \mu_{\gamma^{*}}$, there is a constant $\mathcal{C}_M = \mathcal{C} (M, \gamma^{*}) > 0$  such that if $\norm{f}_X \leq \mathcal{C}_M$, then there is a weak solution  $\theta_{st} \in \dot{H}^{1} (\RR^3)$ to \eqref{Estacionaria}. Moreover, $\lVert \theta_{st} \rVert_{L^2} \leq M$, and $\theta_{st}$ is unique amongst all solutions satisfying

 \begin{equation*}
            \norm{\theta_{st}}_{\dot{H}^1} \leq \norm{f}_{\dot{H}^{-1}}.
        \end{equation*}       
\end{theorem}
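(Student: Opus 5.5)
We construct $\theta_{st}$ by a Galerkin (or fixed-point) scheme built on the stationary energy identity. The velocity is recovered from $\theta$ by the Leray projection, $V = -\mathbb{P}(\theta e_3)$. Equivalently, $\widehat{V}(\xi) = -\widehat{\theta}(\xi)\bigl(e_3 - \xi\xi_3/|\xi|^2\bigr)$, so $\norm{V}_{L^2}\le\norm{\theta}_{L^2}$ and $\nabla\cdot V = 0$.

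\textbf{Step 1: uniform bounds for the approximate problems.} We work with approximations $\theta_n$, for instance Fourier truncations to $\{|\xi|\le n\}$ or the linearized problem $-\Delta\theta + V_\vartheta\cdot\nabla\theta = f$ with $\vartheta$ given. Testing with $\theta$ and using $\int (V\cdot\nabla\theta)\,\theta = 0$ gives
\[
\norm{\nabla\theta}_{L^2}^2 = \langle f,\theta\rangle \le \norm{f}_{\dot H^{-1}}\norm{\theta}_{\dot H^1},
\]
hence $\norm{\theta}_{\dot H^1}\le\norm{f}_{\dot H^{-1}}$. This is exactly the class in which uniqueness is asserted.

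The $L^2$ bound is harder, and it is where $\gamma^*>2$ enters. We split frequencies at a radius $\rho$. On $|\xi|\ge\rho$, we use $\norm{\theta}_{L^2(|\xi|\ge\rho)}\le \rho^{-1}\norm{\theta}_{\dot H^1}$. On $|\xi|\le\rho$, we use the equation in Fourier variables:
\[
\widehat\theta(\xi) = |\xi|^{-2}\bigl(\widehat f(\xi) - \widehat{\nabla\cdot(V\theta)}(\xi)\bigr).
\]
The two terms are controlled as follows.
\begin{itemize}
\item The force term satisfies $|\widehat f(\xi)|\lesssim |\xi|^{\gamma^*}$ near $0$, so $|\xi|^{-2}\widehat f\in L^2$ near the origin, precisely because $\gamma^*>2$ (indeed $\gamma^*>1/2$ would already suffice in $\RR^3$). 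Its size is controlled by a constant $\mu_{\gamma^*}$ times $\norm{f}_X$, via the decay-character estimates of Section \ref{decay-character}.
\item The nonlinear term satisfies $|\widehat{\nabla\cdot(V\theta)}|\le |\xi|\,\norm{V}_{L^2}\norm{\theta}_{L^2}\le|\xi|\,\norm{\theta}_{L^2}^2$. This gives $|\xi|^{-1}\norm{\theta}_{L^2}^2$, which is square integrable on the ball in $\RR^3$, yielding $C\rho^{1/2}\norm{\theta}_{L^2}^2$.
\end{itemize}
Collecting the pieces, we obtain an inequality of the form
\[
\norm{\theta}_{L^2}\le A(\norm{f}_X) + C\rho^{1/2}\norm{\theta}_{L^2}^2,
\]
with $A$ depending on $\mu_{\gamma^*}$ and $\norm{f}_X$. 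A standard continuity/bootstrap argument then keeps $\norm{\theta}_{L^2}\le M$, provided $\norm{f}_X\le\mathcal C_M$ and $M$ exceeds the threshold $\mu_{\gamma^*}$ coming from the constant in the low-frequency force estimate. Making this threshold structure precise, and choosing $\rho$ relative to $M$, is the main obstacle. If the bootstrap over the approximation parameter is awkward, I would instead run a Schauder/Leray–Schauder fixed point on the closed set $\{\norm{\vartheta}_{L^2}\le M,\ \norm{\vartheta}_{\dot H^1}\le\norm{f}_{\dot H^{-1}}\}$. The linear problem preserves this set by the same estimates, and the map is compact in $L^2_{loc}$.

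\textbf{Step 2: passage to the limit.} From the uniform $L^2\cap\dot H^1$ bounds we extract a weak limit with strong $L^2_{loc}$ convergence (Rellich). Since $V_n = -\mathbb P(\theta_n e_3)$ is linear and bounded in $\theta_n$, it also converges weakly in $L^2$. The weak formulation $\int \nabla\theta\cdot\nabla\varphi - \int \theta V\cdot\nabla\varphi = \langle f,\varphi\rangle$ for test functions $\varphi$ then passes to the limit, and both bounds are inherited by lower semicontinuity.

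\textbf{Step 3: uniqueness.} Let $\theta_1,\theta_2$ be two solutions with $\norm{\theta_i}_{\dot H^1}\le\norm{f}_{\dot H^{-1}}$, and set $w=\theta_1-\theta_2$. Then
\[
-\Delta w + V_1\cdot\nabla w + V_w\cdot\nabla\theta_2 = 0 .
\]
Testing with $w$ kills the $V_1$ term and leaves
\[
\norm{\nabla w}_{L^2}^2 \le \Bigl|\int (V_w\cdot\nabla\theta_2)\,w\Bigr| \le \norm{V_w}_{L^3}\norm{\nabla\theta_2}_{L^2}\norm{w}_{L^6}.
\]
By Riesz-transform boundedness, $\norm{V_w}_{L^3}\lesssim\norm{w}_{L^3}$, and by Sobolev/Gagliardo–Nirenberg, $\norm{w}_{L^3}\lesssim\norm{w}_{L^2}^{1/2}\norm{\nabla w}_{L^2}^{1/2}$ and $\norm{w}_{L^6}\lesssim\norm{\nabla w}_{L^2}$. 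Using $\norm{w}_{L^2}\le 2M$, this gives
\[
\norm{\nabla w}_{L^2}^2\le C M^{1/2}\norm{f}_{\dot H^{-1}}\,\norm{w}_{L^2}^{1/2}\norm{\nabla w}_{L^2}^{3/2}.
\]
To close, I would rather exploit the extra smallness of $\norm{\nabla\theta_2}_{L^2}$ via the alternative bound $\norm{V_w}_{L^2}\norm{\nabla\theta_2}_{L^3}\norm{w}_{L^6}$ combined with an $\dot H^{-1}$-type estimate of $w$. If that also fails, I would shrink $\mathcal C_M$ so that $C\norm{f}_{\dot H^{-1}}$ times the relevant norms is less than $1$, forcing $\nabla w=0$ and hence $w=0$.
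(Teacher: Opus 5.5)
\textbf{Uniqueness: the step fails as written.} Your split $\abs{\int (V_w\cdot\nabla\theta_2)\,w}\le\norm{V_w}_{L^3}\norm{\nabla\theta_2}_{L^2}\norm{w}_{L^6}$ only gives $\norm{\nabla w}_{L^2}^{2}\le\varepsilon\,\norm{w}_{L^2}^{1/2}\norm{\nabla w}_{L^2}^{3/2}$, i.e. $\norm{\nabla w}_{L^2}\le\varepsilon^2\norm{w}_{L^2}$. That does not force $w=0$ on $\RR^3$, and shrinking $\mathcal{C}_M$ cannot repair it. In fact no H\"older split that keeps $\nabla\theta_2$ in $L^2$ can close. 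The two $w$-factors would have to lie in $L^p,L^q$ with $\frac1p+\frac1q=\frac12$, while $\norm{w}_{L^p}\lesssim\norm{\nabla w}_{L^2}$ holds only for $p=6$. Your alternative needs $\nabla\theta_2\in L^3$, which you do not have.

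The missing idea, which the paper uses both for uniqueness and in its contraction estimate \eqref{existencia_estacionaria_eq2}, is to move the derivative onto $w$ using $\nabla\cdot V_w=0$:
\[
-\int (V_w\cdot\nabla\theta_2)\,w\,dx=\int (V_w\cdot\nabla w)\,\theta_2\,dx\le\norm{V_w}_{L^6}\norm{\nabla w}_{L^2}\norm{\theta_2}_{L^3}\le C\norm{\theta_2}_{L^2}^{1/2}\norm{\nabla\theta_2}_{L^2}^{1/2}\norm{\nabla w}_{L^2}^{2}.
\]
This is homogeneous in $\nabla w$, so it gives $w=0$ once $CM^{1/2}\norm{f}_{\dot{H}^{-1}}^{1/2}<1$. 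The difference equation can be written with either solution in the $\theta_2$ slot, so only the constructed solution needs the $L^2$ bound. Your use of $\norm{w}_{L^2}\le 2M$ presupposes an $L^2$ bound on the competitor, which the stated uniqueness class does not provide.

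\textbf{Existence: different route, but the force bound is unjustified.} Your route is a direct low-frequency analysis of the stationary equation plus a Schauder--Tychonoff fixed point in $L^2_{loc}$. For the linearized map the transport term contributes only $C\rho^{1/2}M\norm{\theta}_{L^2}$ and is absorbed, so no bootstrap is needed. This is genuinely different from, and more direct than, the paper's detour: the parabolic problem \eqref{auxiliar1}, Fourier-splitting decay of $u^{i+1}$, the representation $\theta_{st}^{i+1}=\int_0^\infty(u^{i+1}+\Phi)\,dt$, and a contraction in $\dot{H}^1$.

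However, the estimate it rests on, $\norm{\abs{\xi}^{-2}\fourier{f}}_{L^2(B(\rho))}\le\mu_{\gamma^*}\cdot(\text{norms of }f)$, is false; the decay character only makes $\norm{f}_{\dot H^{-2}}$ finite. The problem is invariant under $\theta\mapsto\theta_\lambda=\lambda\theta(\lambda x)$, $f\mapsto f_\lambda=\lambda^3 f(\lambda x)$. This preserves $\gamma^*$ and gives
\[
\norm{f_\lambda}_{L^2}=\lambda^{3/2}\norm{f}_{L^2},\qquad \norm{f_\lambda}_{\dot H^{-1}}=\lambda^{1/2}\norm{f}_{\dot H^{-1}},\qquad \norm{f_\lambda}_{\dot H^{-2}}=\lambda^{-1/2}\norm{f}_{\dot H^{-2}}.
\]
The paper relies on the same unjustified bound: Lemma~\ref{lema0.1} and $\int_0^\infty\norm{\Phi(t)}_{L^2}dt\le\mu_{\gamma^*}(1+\norm{f}_{L^2})$, whose left side dominates $\norm{f}_{\dot H^{-2}}$. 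So here you reproduce the paper rather than diverge from it.

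The scaling shows the conclusion cannot hold with $\mathcal{C}_M$ depending only on $(M,\gamma^*)$. For $\lambda\le 1$, $f_\lambda$ still satisfies $\norm{f_\lambda}_X\le\mathcal{C}_M$ and $\gamma^*(f_\lambda)=\gamma^*(f)$, and $\theta_\lambda$ stays in the uniqueness class. Yet $\norm{\theta_\lambda}_{L^2}=\lambda^{-1/2}\norm{\theta}_{L^2}\to\infty$. What your argument can prove is the version with an explicit hypothesis such as $\norm{f}_{\dot H^{-2}}\le M/4$ together with $M\norm{f}_{\dot H^{-1}}$ small. That amounts to a scale-invariant smallness of $\norm{f}_{\dot H^{-2}}\norm{f}_{\dot H^{-1}}$, and your route makes this dependence transparent.
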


In order to solve this problem, we use the approach developed by Bjorland and Schonbek in \cite{bjorlandschonbeck09} to prove  existence of a stationary weak solutions to the  Navier-Stokes equations. This method consists of first constructing a linear parabolic PDE with a structure analogous to \eqref{Parabolica}. Then, integrating the  solution to this linear parabolic PDE in time, we obtain a weak solution to a linear equation with a structure analogous to \eqref{Estacionaria}. We repeat this process iteratively to obtain a sequence  that converges to a weak solution to the nonlinear equation \eqref{Estacionaria}. This method is motivated by the well known  observation that integrating in time the solution to the heat equation yields a solution to the Poisson equation.

 In our second main result, we deal with the asymptotic stability of $\theta_{st}$, in the sense that, for large times, the solution $\theta$ to \eqref{Parabolica} converges to $\theta_{st}$. To that end, we set $\omega = \theta - \theta_{st}$, which satisfies 

\begin{equation} \label{eq_asintotica}
            \begin{cases} 
         &\partial_{t}\omega + V_\theta\cdot\nabla \omega + V_\omega \cdot\nabla \theta_{st}  = \Delta \omega,\\
         &  V_\omega= V_\theta - V_{\theta_{st}}, \\
         &\nabla \cdot V_\omega = 0,\\
         &\omega(x,0)=  \omega_0 = \theta_0 - \theta_{st}.
            \end{cases}
\end{equation}

\begin{theorem} \label{Teorema_Decay_Asintotica}
    Let $\theta$ be a weak solution to \eqref{Parabolica} with initial data $\theta_0 \in L^2(\RR^3)$ and $\theta_{st}$ the stationary solution obtained in Theorem \ref{existenciaestacionaria}. Then, if $\norm{f}_{\dot{H}^{-1}}$ is small enough, the weak solution $\omega$ to \eqref{eq_asintotica} with initial data $\omega_0 = \theta_0 - \theta_{st}$ is such that 

    \begin{equation*} 
       \norm{\omega(t)}_{L^2}^{2} \leq C (1+t)^{- \min \llaves{\frac 32, \frac 32 + \min\llaves{r^{*}(\theta_0), r^{*}(\theta_{st})}}},
    \end{equation*} 
 where $C=C(\norm{\omega_0}_{L^2})$ and $r^{*}(\theta_0)$, $r^{*}(\theta_{st})$ are the decay characters of $\theta_0$ and $\theta_{st}$. 
\end{theorem}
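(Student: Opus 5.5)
We will prove the decay estimate with the Fourier splitting method, combined with the decay character estimates for the linear heat flow from Section \ref{decay-character}.

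\emph{Energy inequality.} Multiply the first equation of \eqref{eq_asintotica} by $\omega$ and integrate. Since $\nabla\cdot V_\theta=0$, the term $\int V_\theta\cdot\nabla\omega\,\omega$ vanishes. The remaining term satisfies
\begin{equation*}
\abs{\int V_\omega\cdot\nabla\theta_{st}\,\omega\,dx}\le \norm{V_\omega}_{L^3}\norm{\nabla\theta_{st}}_{L^2}\norm{\omega}_{L^6}\le C\norm{\omega}_{\dot H^{1/2}}\norm{\theta_{st}}_{\dot H^1}\norm{\nabla\omega}_{L^2}.
\end{equation*}
Here we use that $V_\omega=-(\nabla P_\omega+\omega e_3)$ is a zeroth order Riesz-type operator applied to $\omega$, namely the Leray projection of $-\omega e_3$, so it is bounded on $L^p$ and $\dot H^s$.

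To close the estimate we write $\theta_{st}=\omega$-independent data and, for the simplest argument, use the $L^3$ bound in the form
\begin{equation*}
\norm{V_\omega}_{L^3}\norm{\omega}_{L^6}\lesssim \norm{V_\omega}_{L^3}\norm{\nabla\omega}_{L^2},
\end{equation*}
with $\nabla\theta_{st}\in L^2$. Alternatively, integrate by parts to move the derivative onto $\omega$:
\begin{equation*}
\int V_\omega\cdot\nabla\theta_{st}\,\omega=-\int V_\omega\cdot\nabla\omega\,\theta_{st}.
\end{equation*}
We then need $\theta_{st}\in L^\infty$ or $\dot H^{3/2}$, but the cleanest route is Hardy/Sobolev:
\begin{equation*}
\abs{\int V_\omega\cdot\nabla\theta_{st}\,\omega}\le\norm{V_\omega}_{L^6}\norm{\nabla\theta_{st}}_{L^{3/2}}\norm{\omega}_{L^6}.
\end{equation*}
I would pick the version that matches what Theorem \ref{existenciaestacionaria} gives. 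Since only $\theta_{st}\in\dot H^1$ is available with $\norm{\theta_{st}}_{\dot H^1}\le\norm{f}_{\dot H^{-1}}$, I would use the first bound together with $\norm{\omega}_{\dot H^{1/2}}\le\norm{\omega}^{1/2}_{L^2}\norm{\nabla\omega}^{1/2}_{L^2}$ and absorb.

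This is the main obstacle. A pure smallness absorption requires a bound of the form $C\norm{\theta_{st}}_{\dot H^1}\norm{\nabla\omega}_{L^2}^2$, which $\dot H^1$ control of $\theta_{st}$ alone does not provide in 3D. I would first try to show that the stationary solution lies in $\dot H^{1}\cap\dot H^{2}$ or $L^3$, either by bootstrapping from \eqref{Estacionaria} or by using $\gamma^*>2$. Then $\norm{V_\omega}_{L^3}\lesssim \norm{\omega}_{L^3}$ is not the relevant factor; instead we bound
\begin{equation*}
\abs{\int V_\omega\cdot\nabla\omega\,\theta_{st}}\le\norm{\theta_{st}}_{L^3}\norm{V_\omega}_{L^6}\norm{\nabla\omega}_{L^2}\le C\norm{\theta_{st}}_{L^3}\norm{\nabla\omega}^2_{L^2}.
\end{equation*}
Since $\norm{\theta_{st}}_{L^3}\lesssim\norm{\theta_{st}}_{L^2}^{1/2}\norm{\theta_{st}}_{\dot H^1}^{1/2}\le M^{1/2}\norm{f}_{\dot H^{-1}}^{1/2}$, smallness of $\norm{f}_{\dot H^{-1}}$ makes this at most $\tfrac12\norm{\nabla\omega}^2_{L^2}$. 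This yields
\begin{equation*}
\frac{d}{dt}\norm{\omega}_{L^2}^2+\norm{\nabla\omega}_{L^2}^2\le0,
\end{equation*}
justified for weak solutions by the usual approximation argument.

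\emph{Fourier splitting.} Let $B(t)=\{\abs{\xi}\le (k/(1+t))^{1/2}\}$. From the energy inequality,
\begin{equation*}
\frac{d}{dt}\big((1+t)^k\norm{\omega}_{L^2}^2\big)\le Ck(1+t)^{k-1}\int_{B(t)}\abs{\fourier\omega(\xi,t)}^2d\xi.
\end{equation*}
To estimate the low frequencies, write $\omega$ via Duhamel:
\begin{equation*}
\fourier\omega(\xi,t)=e^{-\abs\xi^2t}\fourier{\omega_0}-\int_0^t e^{-\abs\xi^2(t-s)}\big(\widehat{V_\theta\cdot\nabla\omega}+\widehat{V_\omega\cdot\nabla\theta_{st}}\big)ds.
\end{equation*}
Both nonlinear terms are in divergence form, $\nabla\cdot(V_\theta\omega)$ and $\nabla\cdot(V_\omega\theta_{st})$. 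Hence their Fourier transforms are bounded by
\begin{equation*}
\abs\xi\big(\norm{\theta}_{L^2}\norm{\omega}_{L^2}+\norm{\omega}_{L^2}\norm{\theta_{st}}_{L^2}\big)\le C\abs\xi,
\end{equation*}
using the uniform $L^2$ bounds for $\theta$, $\omega$ and $\theta_{st}$. The nonlinear part therefore contributes $C\,r^{5}t^2$ integrated over the ball, of order $(1+t)^{-5/2+2}$. The crude bound is then improved by a bootstrap with the first decay $(1+t)^{-1/2}$, which gives the rate $(1+t)^{-3/2}$.

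For the linear part we have $\omega_0=\theta_0-\theta_{st}$, and $r^*(\omega_0)\ge\min\{r^*(\theta_0),r^*(\theta_{st})\}$. The decay character theorem then gives
\begin{equation*}
\int_{B(t)}\abs{e^{-\abs\xi^2t}\fourier{\omega_0}}^2\,d\xi\le C(1+t)^{-(\frac32+r^*(\omega_0))}.
\end{equation*}
Combining the two contributions and taking the minimum yields the stated rate.
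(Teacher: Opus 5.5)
Your argument is correct and uses the same skeleton as the paper's proof: an energy inequality, Fourier splitting on $\abs{\xi}^2\le k/(1+t)$, Duhamel's formula at low frequencies with the divergence-form bound $\abs{\xi}\,\norm{V_\theta\omega+V_\omega\theta_{st}}_{L^1}$, and heat decay of $\omega_0$ via the decay character. The estimate you finally settle on for the $\theta_{st}$-term is exactly \eqref{FS3}. The detours through $\dot H^{1/2}$, $L^{3/2}$ and extra regularity of $\theta_{st}$ can be deleted, since $\theta_{st}\in L^3$ already follows by interpolating $L^2$ and $\dot H^1$.

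Your proof differs from the paper's at three points, and in each your version is the sounder or stronger one.

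(i) You remove $\int V_\theta\cdot\nabla\omega\,\omega$ by the divergence-free cancellation. The paper instead bounds this term by $C\norm{\theta(t)}_{L^3}\norm{\nabla\omega}_{L^2}^2$ and asserts $1-C\norm{\theta(t)}_{L^3}-CM^{1/2}\norm{f}_{\dot H^{-1}}^{1/2}>0$ for small $f$. That does not follow, because smallness of $f$ does not make $\norm{\theta(t)}_{L^3}$ small. If you justify the energy inequality by approximation, regularize $V_\theta$ rather than $\omega$: the scheme of Section 4 mollifies $\omega$ in $V_\theta\cdot\nabla\psi_\delta[\omega_m]$, which loses the cancellation.

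(ii) The paper closes with Cauchy--Schwarz in time, $\bigl(\int_0^t\norm{\omega}_{L^2}\bigr)^2\le t\int_0^t\norm{\omega}_{L^2}^2$, followed by Gronwall-type lemmas split into $r^*\le 0$ and $r^*>0$. Your bootstrap $\frac12\to1\to\frac32$ is more elementary, provided each step keeps the factor $\norm{\omega(s)}_{L^2}$, i.e. uses $\abs{\xi}\,\norm{(V_\theta\omega+V_\omega\theta_{st})(s)}_{L^1}\le C\abs{\xi}\norm{\omega(s)}_{L^2}$ rather than the crude $C\abs{\xi}$. Here $\sup_s\norm{\theta(s)}_{L^2}\le\norm{\omega_0}_{L^2}+M$ should come from $\theta=\omega+\theta_{st}$; the forced energy inequality for $\theta$ alone only gives linear growth. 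Your iteration also need not stop at $\frac32$. Each step gains $\frac12$ until $\int_0^\infty\norm{\omega}_{L^2}<\infty$, which yields $\min\{\frac52,\frac32+r^*\}$, whereas the paper's Cauchy--Schwarz step caps the rate at $\frac32$.

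(iii) Your inequality $r^*(\omega_0)\ge\min\{r^*(\theta_0),r^*(\theta_{st})\}$ is the direction the argument actually needs. Lemma~\ref{suma_decay_character} states the reverse, although its proof establishes yours. Since $r^*(\omega_0)$ need not exist, it is cleanest to bound $\norm{e^{t\Delta}\omega_0}_{L^2}\le\norm{e^{t\Delta}\theta_0}_{L^2}+\norm{e^{t\Delta}\theta_{st}}_{L^2}$ and apply Theorem~\ref{theorem-decay-heat} to each term.
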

Notice that the initial perturbation $\omega_0$ need not be small. We prove this Theorem using the Fourier Splitting method  developed by Schonbek \cite{ME1980, ME1985}. The method consists of obtaining a differential inequality for the $L^2$ norm of $\omega$, such that, after considering a time-dependent ball in frequency space, its  behavior will  depend  on the small frequencies  of $\fourier{\omega}$ for large t. We prove existence of weak solutions to \eqref{eq_asintotica} in Theorem \ref{teorema_soluciones_debiles_decay}.

The structure of this article is the following. In Section 2, we introduce tools important for the development of the proofs, such as the definition of the Decay Character and the Fourier Splitting method. In Section 3, we focus on proving the existence of weak solutions to \eqref{Estacionaria}, following the method developed by Bjorland and Schonbek \cite{bjorlandschonbeck09}. Finally, in Section 4, we use the retarded mollifiers method developed by Caffarelli, Kohn and Nirenberg \cite{caffarelli1982partial} to prove the existence of weak solutions to \eqref{eq_asintotica}. Then, we address the asymptotic behavior of  \eqref{Parabolica},
applying the Fourier splitting method to prove Theorem \ref{Teorema_Decay_Asintotica}.

\section{Preliminaries}

We adopt the following notations,

\begin{align*}
    & L^{p}(\RR^3) = \llaves{u: \RR^3 \to \RR  \mid \norm{u}_{L^p} < \infty}, \ \text{where} \ \norm{u}_{L^p} = \parentesis{\int_{\RR^3} \abs{ u}^p}^{\frac 1p} \, dx, \, \, \text{for} \ 1 \leq p < \infty, \\ 
     & L^2(\RR^3) \ \text{is the Hilbert space with inner product} \ \escalar{u,v}_{L^2} = \int_{\RR^3} uv  \ dx,\\
    &\dot{H}^{1}(\RR^3) \ \text{is the closure of the space $C^{\infty}_{c}(\RR^3)$ in the norm $\norm{\nabla u}_{L^2}$},\\
    &\dot{H}^{-1}(\RR^3) = (\dot{H}^{1}(\RR^3))^{'},\\
    & \escalar{g, u}_{\dot{H}^{-1}, \dot{H}^{1}} \ \text{is the action of the functional $g \in H^{-1}(\RR^3)$ over the element $u \in \dot{H}^{1}(\RR^3)$},\\
    & \norm{f}_X= \max \llaves{\norm{f}_{L^2}, \norm{f}_{\dot{H}^{-1}}} \ \forall f\in L^{2}(\RR^3)\cap\dot{H}^{-1}(\RR^3), 
\end{align*}
where $V^{'}$ denotes the dual space of $V$. Also, $C$ denotes general positive constants that may change from line to line.

\subsection{Decay Character} \label{decay-character} Given a function  $u_0 \in L^2(\RR^n)$ the decay character associates to $u_0$ a number $\gamma = \gamma(u_0)$  which measures the algebraic order of $u_{0}$ near the origin, in the sense that $\gamma = \gamma(u_0)$ if $|\widehat{u_0} (\xi)| \approx |\xi| ^{\gamma}$, for $|\xi| \approx 0$. This concept was first introduce Bjorland and Schonbek in \cite{bjorland2009}; it was later studied in more detail by Niche and Schonbek \cite{CESARELENA2015}, and Brandolese \cite{brandolese2016}. We now introduce some important definitions and properties regarding the decay character.

\begin{definition} \label{decayind}
 The decay indicator of $u_0 \in L^2(\RRn)$ is $\gamma \in \parentesis{- \frac n2, \infty}$ such that

\begin{equation} \label{indicator}
    P_\gamma(u_0) = \lim_{\rho \to 0^{+}}\rho^{-2\gamma}\rho^{-n}\int_{B(\rho)}\abs{\fourier{u_0}(\rho)}^2 d\rho, 
\end{equation}
provided the limit exists. Here,  $B (\rho)$ is the ball of radius $\rho$ centered at the origin. 
\end{definition}

\begin{definition} \label{decaych}
The decay character of $u_0 \in L^2(\RRn)$, denoted by $\gamma ^{\ast} = \gamma^{*}(u_0)$, is the only number $\gamma \in \parentesis{-\frac n2, \infty}$ such that $0 < P_{\gamma}(u_0) < \infty$.
\end{definition}

\begin{remark}
It is possible to explicitly calculate the decay character when $u_0 \in L^p(\RRn) \cap L^{2}(\RRn)$ with $1\leq p \leq 2$, see Ferreira, Niche and Planas \cite{ferreiranicheplanas2017}.  
\end{remark}

A useful application concerning estimates for the decay of solutions to the heat equation through the use of the decay character concept was established by Bjorland and Schonbek \cite{bjorland2009}. 

\begin{theorem}
\label{theorem-decay-heat}
Let $u_0 \in L^2(\RR^n)$ with decay character $\gamma^{*}=\gamma^{*}(u_{0})$ and $u$ be a solution to the heat equation with initial condition $u_0$. Then, there exist constants $C_1, C_2 >0$ such that

\begin{equation*}
    C_1(1+t)^{-\parentesis{\frac n2+ \gamma^{*}}} \leq \norm{u(t)}_{2}^{2}\leq C_2(1+t)^{-\parentesis{\frac n2+ \gamma^{*}}}.
\end{equation*}
\end{theorem}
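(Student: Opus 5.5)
The plan is to work entirely on the Fourier side, where the heat flow is explicit: $\fourier{u}(\xi,t) = e^{-t|\xi|^2}\fourier{u_0}(\xi)$. By Plancherel,
\[
\norm{u(t)}_{2}^{2} = \int_{\RR^n} e^{-2t|\xi|^2}\abs{\fourier{u_0}(\xi)}^2\, d\xi .
\]
Since $\norm{u(t)}_2 \le \norm{u_0}_2$, both bounds are trivial for $t\le 1$, and we only need to treat $t\ge 1$, where $(1+t)\sim t$. Definition \ref{decaych} gives $0<P_{\gamma^*}(u_0)<\infty$. Consequently there is $\rho_0>0$ such that
\[
\tfrac12 P\,\rho^{n+2\gamma^*} \le \int_{B(\rho)} \abs{\fourier{u_0}}^2 \le 2P\,\rho^{n+2\gamma^*} \quad \text{for } 0<\rho\le\rho_0,
\]
where $P = P_{\gamma^*}(u_0)$. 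Note that $n+2\gamma^*>0$, since $\gamma^*>-n/2$.

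For the \emph{lower bound}, restrict the integral to the ball $B(\rho)$ with $\rho = t^{-1/2}$. On this ball $e^{-2t|\xi|^2}\ge e^{-2}$, so
\[
\norm{u(t)}_2^2 \ge e^{-2}\int_{B(t^{-1/2})}\abs{\fourier{u_0}}^2 \ge \tfrac12 e^{-2} P\, t^{-(n/2+\gamma^*)}
\]
once $t\ge \rho_0^{-2}$. The remaining compact interval $[0,\rho_0^{-2}]$ is handled by positivity and continuity of $\norm{u(t)}_2$. It is positive because $\fourier{u_0}\not\equiv 0$, which follows from $P>0$. This yields $C_1$.

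For the \emph{upper bound}, split frequency space into $B(\rho_0)$ and its complement. On the complement,
\[
\int_{|\xi|\ge \rho_0} e^{-2t|\xi|^2}\abs{\fourier{u_0}}^2 \le e^{-2t\rho_0^2}\norm{u_0}_2^2,
\]
which decays exponentially and is therefore dominated by any power of $t$. On the ball, write $F(r) = \int_{B(r)}\abs{\fourier{u_0}}^2$, which is nondecreasing with $F(r)\le 2P r^{n+2\gamma^*}$. Using polar coordinates and a Stieltjes integration by parts,
\[
\int_{B(\rho_0)} e^{-2t|\xi|^2}\, dF = e^{-2t\rho_0^2}F(\rho_0) + \int_0^{\rho_0} 4tr\, e^{-2tr^2} F(r)\, dr .
\]
The first term again decays exponentially. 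For the second, bound $F(r)\le 2Pr^{n+2\gamma^*}$ and substitute $s = t^{1/2} r$:
\[
\int_0^{\rho_0} 4tr\, e^{-2tr^2} F(r)\, dr \le C P\, t^{-(n/2+\gamma^*)} \int_0^\infty s^{\,n+2\gamma^*+1} e^{-2s^2}\, ds .
\]
The last integral is finite precisely because $n+2\gamma^*>0$. Combining the pieces gives $C_2$.

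The main obstacle is the Stieltjes integration by parts. The decay character only controls the averaged quantity $F(\rho)$, not pointwise values of $\abs{\fourier{u_0}}$, so one cannot simply insert $\abs{\fourier{u_0}(\xi)}\lesssim|\xi|^{\gamma^*}$ into the integral. Passing to the monotone function $F$ and exploiting that $e^{-2tr^2}$ is decreasing in $r$ resolves this. A dyadic decomposition into shells $2^{-k-1}\rho_0\le|\xi|<2^{-k}\rho_0$, summing $e^{-2t4^{-k-1}\rho_0^2}F(2^{-k}\rho_0)$, is an equivalent alternative if the Stieltjes formalism is to be avoided.
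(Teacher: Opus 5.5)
The paper does not prove this statement; it quotes it from Bjorland and Schonbek \cite{bjorland2009}, and Niche and Schonbek \cite{CESARELENA2015} for the generalization in Theorem \ref{th2.5}. Your argument is a correct, self-contained proof.

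Your lower bound is the standard one: restrict to $B(t^{-1/2})$, where the multiplier is at least $e^{-2}$, and use the lower bound on $\int_{B(\rho)}\abs{\widehat{u_0}}^2$ coming from $P>0$.

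Your upper bound takes a different route from the cited works, which use Fourier splitting. In that approach one starts from $\frac{d}{dt}\norm{u(t)}_2^2=-2\norm{\nabla u(t)}_2^2$ and discards the frequencies outside a ball of radius $(m/(1+t))^{1/2}$. On that ball one uses $\abs{\widehat{u}(\xi,t)}\le\abs{\widehat{u_0}(\xi)}$ together with $\int_{B(\rho)}\abs{\widehat{u_0}}^2\le C\rho^{n+2\gamma^*}$. One then multiplies by $(1+t)^m$ with $m>\frac n2+\gamma^*$ and integrates in time. You instead evaluate the explicit Fourier integral through a layer-cake identity for the monotone function $F$.

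What your route gains:
\begin{itemize}
\item Explicit large-time constants, which are multiples of $P$.
\item No integrating factor and no choice of $m$.
\item An immediate adaptation to symbols $e^{-c\abs{\xi}^{2\alpha}t}$ via the substitution $s=t^{1/(2\alpha)}r$.
\end{itemize}

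What it costs: it relies on the exact solution formula and on the radial monotonicity of the multiplier, so it is confined to linear problems. Fourier splitting needs only an energy inequality plus a low-frequency bound on $\widehat{u}$. That robustness is why the paper uses it for the nonlinear problems in Lemma \ref{decay_u_lema_2} and in the proof of Theorem \ref{Teorema_Decay_Asintotica}.

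A few minor points:
\begin{itemize}
\item The step you flag as the main obstacle needs no Stieltjes formalism. On $B(\rho_0)$ one has $e^{-2t\abs{\xi}^2}=e^{-2t\rho_0^2}+\int_{\abs{\xi}}^{\rho_0}4tr\,e^{-2tr^2}\,dr$, and Tonelli gives your identity directly.
\item Your claim that both bounds are trivial for $t\le 1$ holds only for the upper bound. On a bounded interval $[0,T]$, the lower bound follows at once from monotonicity, $\norm{u(t)}_2\ge\norm{u(T)}_2>0$.
\item The Gaussian moment $\int_0^\infty s^{n+2\gamma^*+1}e^{-2s^2}\,ds$ is finite whenever $n+2\gamma^*>-2$. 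Its convergence is therefore not ``precisely'' tied to $\gamma^*>-\frac n2$.
\end{itemize}
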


The following Lemma is an immediate consequence of Theorem \ref{theorem-decay-heat}.

\begin{lemma} \label{lema0.1} Let $f \in L^2(\RRn)$ such that $\gamma^{*} (f) > 1 - \frac{n}{2}$ and  let $\Phi = e^{\Delta t}f$. Then 
    
\begin{equation} \label{lema2.1.1}
        \int_0^{\infty} \norm{\Phi (t)}_{L^2}^2  dt \leq \mu_{\gamma^{*}} \norm{f}_{L^2}^2 \leq  \mu_{\gamma^{*}} (1 + \norm{f}_{L^2}^2) = \mathcal{M}_1, 
\end{equation}
where $\mu_{\gamma^{*}} > 0$ is a constant that depends only on $\gamma^{*}$ and $n$.
\end{lemma}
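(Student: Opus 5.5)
We apply Theorem \ref{theorem-decay-heat} to $u_0 = f$. Since $\Phi(t) = e^{\Delta t} f$ solves the heat equation with initial datum $f$, and $f$ has decay character $\gamma^{*} = \gamma^{*}(f)$, the theorem gives
\begin{equation*}
\norm{\Phi(t)}_{L^2}^2 \leq C_2 (1+t)^{-\parentesis{\frac n2 + \gamma^{*}}}.
\end{equation*}
The hypothesis $\gamma^{*} > 1 - \frac n2$ is exactly the condition $\frac n2 + \gamma^{*} > 1$. Under it the right-hand side is integrable on $(0,\infty)$, and
\begin{equation*}
\int_0^\infty \norm{\Phi(t)}_{L^2}^2 \, dt \leq \frac{C_2}{\frac n2 + \gamma^{*} - 1}.
\end{equation*}

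The only real issue is that this bound must be proportional to $\norm{f}_{L^2}^2$, with a constant depending only on $\gamma^{*}$ and $n$. The bare statement of Theorem \ref{theorem-decay-heat} hides how $C_2$ depends on $f$. I would therefore redo the upper bound through Plancherel:
\begin{equation*}
\norm{\Phi(t)}_{L^2}^2 = \int_{\RR^n} e^{-2|\xi|^2 t} \abs{\fourier{f}(\xi)}^2 \, d\xi .
\end{equation*}
Fix the radius $\rho_0 = 1$ and split this integral into the ball $B(1)$ and its complement. We keep the ball of fixed radius rather than taking a time-dependent one. The two regions are then handled separately.

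\textbf{High frequencies.} On $|\xi| \geq 1$ we have $e^{-2|\xi|^2 t} \leq e^{-2t}$. So this part is bounded by $e^{-2t}\norm{f}_{L^2}^2$, whose time integral is at most $\frac12 \norm{f}_{L^2}^2$.

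\textbf{Low frequencies.} Alternatively, integrate in time first:
\begin{equation*}
\int_0^\infty e^{-2|\xi|^2 t} \, dt = \frac{1}{2|\xi|^2}.
\end{equation*}
This shows that
\begin{equation*}
\int_0^\infty \norm{\Phi(t)}_{L^2}^2 \, dt = \frac12 \int_{\RR^n} |\xi|^{-2} \abs{\fourier{f}(\xi)}^2 \, d\xi = \frac12 \norm{f}_{\dot H^{-1}}^2,
\end{equation*}
which is finite precisely because $\gamma^{*} > 1 - \frac n2$. Finiteness follows by decomposing $B(1)$ dyadically and using $\int_{B(\rho)} \abs{\fourier f}^2 \lesssim \rho^{n+2\gamma^{*}}$ for small $\rho$. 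That estimate comes from $0 < P_{\gamma^{*}}(f) < \infty$ in Definition \ref{decaych}.

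The constant $\mu_{\gamma^{*}}$ should be read in the sense used in the paper. It is generated by the decay-character bound, namely the constant $C_2$ of Theorem \ref{theorem-decay-heat} normalized against $\norm{f}_{L^2}^2$, together with the factor $(\frac n2 + \gamma^{*} - 1)^{-1}$. The second inequality in \eqref{lema2.1.1} is trivial, since $\norm{f}_{L^2}^2 \leq 1 + \norm{f}_{L^2}^2$.

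The main obstacle is the low-frequency normalization. The decay character controls the mass of $\abs{\fourier f}^2$ in small balls only asymptotically, so converting it into a bound that is uniform in terms of $\norm{f}_{L^2}^2$ is the delicate point. I would handle this exactly as in the proof of Theorem \ref{theorem-decay-heat} in \cite{bjorland2009}. There, $\rho_0$ is chosen so that $\int_{B(\rho)} \abs{\fourier f}^2 \leq 2P_{\gamma^{*}}(f)\rho^{n+2\gamma^{*}}$ for $\rho \le \rho_0$. The remaining dependence on $f$ is then absorbed into the constant, as the statement allows.
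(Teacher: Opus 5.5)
\textbf{The gap.} Your first paragraph is exactly the paper's argument. The paper gives no separate proof and calls the lemma an immediate consequence of Theorem \ref{theorem-decay-heat}: one integrates $C_2(1+t)^{-(\frac n2+\gamma^*)}$, which is integrable precisely because $\gamma^*>1-\frac n2$. The gap is the step you flag yourself and then dismiss with ``the remaining dependence on $f$ is then absorbed into the constant, as the statement allows.'' The statement does not allow it: it asserts that $\mu_{\gamma^*}$ depends only on $\gamma^*$ and $n$. But $C_2$ in Theorem \ref{theorem-decay-heat}, and your low-frequency bound, depend on $f$ through $P_{\gamma^*}(f)$ and through the radius $\rho_0$ below which $\int_{B(\rho)}\abs{\fourier f}^2\le 2P_{\gamma^*}(f)\rho^{n+2\gamma^*}$. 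Once $f$-dependence is permitted, the inequality $\le\mu\norm{f}_{L^2}^2$ says nothing beyond finiteness. The $f$-independence is also not cosmetic: Lemma \ref{lema4.1} forces $\mathcal M_1<M$ by shrinking $\norm f_X$ while holding $\mu_{\gamma^*}$ fixed.

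\textbf{Why it cannot be repaired.} No choice of $\rho_0$ fixes this, because the claim is false, and your own identity shows it. Take $f_\lambda(x)=\lambda^{n/2}f(\lambda x)$ with $f\neq0$. Then $\norm{f_\lambda}_{L^2}=\norm f_{L^2}$. Since $\fourier{f_\lambda}(\xi)=\lambda^{-n/2}\fourier f(\xi/\lambda)$, one gets $P_\gamma(f_\lambda)=\lambda^{-n-2\gamma}P_\gamma(f)$, so $\gamma^*(f_\lambda)=\gamma^*(f)$. Since $e^{t\Delta}f_\lambda(x)=\lambda^{n/2}(e^{\lambda^2t\Delta}f)(\lambda x)$,
\[
\int_0^\infty\norm{e^{t\Delta}f_\lambda}_{L^2}^2\,dt=\lambda^{-2}\int_0^\infty\norm{e^{t\Delta}f}_{L^2}^2\,dt=\frac{\norm{f}_{\dot H^{-1}}^2}{2\lambda^2}\longrightarrow\infty\qquad(\lambda\to0^+),
\]
while $\gamma^*$ and $\norm{f_\lambda}_{L^2}$ stay fixed. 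So your Plancherel computation actually proves the uniform identity $\int_0^\infty\norm{\Phi(t)}_{L^2}^2\,dt=\frac12\norm f_{\dot H^{-1}}^2\le\frac12\norm f_X^2$, valid for every $f\in L^2\cap\dot H^{-1}$. You should have stated that, rather than reinterpreting $\mu_{\gamma^*}$. The hypothesis $\gamma^*>1-\frac n2$ only serves, via your dyadic argument, to guarantee $f\in\dot H^{-1}$. The paper's one-line derivation from Theorem \ref{theorem-decay-heat} has exactly the same defect.
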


Niche and Schonbek \cite{CESARELENA2015} generalized Theorem \ref{theorem-decay-heat} as follows. Let $X$ be a Hilbert space and $\mathcal{L}$ be a linear  diagonalizable  pseudodifferential operator $\mathcal{L}:X^n \to (L^2(\RRn))^n$ with  symbol $\mathcal{M}(\xi)$ such that

\begin{equation*}
    \mathcal{M}(\xi) = P^{-1}(\xi)D(\xi)P(\xi), \hspace{0.5 cm} \xi-\text{a.e.},
\end{equation*}
where $P(\xi) \in O(n)$ and $D(\xi)=-c_i\abs{\xi}^{2\alpha}\delta_{ij}$ with $c_i>C>0$ and $0< \alpha \leq 1$. Then, the following holds.

\begin{theorem} \label{th2.5}
Let $u_0 \in L^2(\RR^n)$ with decay character $\gamma^{*}=\gamma^{*}(u_0)$ and $u$ a solution to

\begin{equation} \label{lin}
    u_t= \mathcal{L}u,
\end{equation}
with initial condition $u_0$. Then there exist constants $C_1, C_2 >0$ such that

\begin{equation*}
    C_1(1+t)^{-\parentesis{\frac{1}{\alpha}}\parentesis{\frac n2+ \gamma^{*}}} \leq \norm{u(t)}_{2}^{2}\leq C_2(1+t)^{-\parentesis{\frac{1}{\alpha}}\parentesis{\frac n2+ \gamma^{*}}}.
\end{equation*}
\end{theorem}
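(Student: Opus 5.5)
The approach is to reduce everything to Fourier space, where $\mathcal{L}$ diagonalizes, and then argue exactly as for the heat equation (Theorem \ref{theorem-decay-heat}), with $|\xi|^2$ replaced by $|\xi|^{2\alpha}$. Write $\fourier{u}(\xi,t) = e^{t\mathcal{M}(\xi)}\fourier{u_0}(\xi) = P^{-1}(\xi)e^{tD(\xi)}P(\xi)\fourier{u_0}(\xi)$. Since $P(\xi)\in O(n)$ is an isometry, Plancherel gives
\begin{equation*}
\norm{u(t)}_2^2 = \int_{\RRn} \abs{e^{tD(\xi)}P(\xi)\fourier{u_0}(\xi)}^2\,d\xi .
\end{equation*}
The diagonal entries $c_i$ lie between $C$ and some upper bound $C'$, which I assume is implicit in $\mathcal{L}$ being a pseudodifferential operator of order $2\alpha$. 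Hence this integral is squeezed between
\begin{equation*}
\int_{\RRn} e^{-2C'|\xi|^{2\alpha}t}\abs{\fourier{u_0}}^2\,d\xi
\quad\text{and}\quad
\int_{\RRn} e^{-2C|\xi|^{2\alpha}t}\abs{\fourier{u_0}}^2\,d\xi .
\end{equation*}
This uses $\abs{P\fourier{u_0}} = \abs{\fourier{u_0}}$ again. So the problem becomes two-sided estimates of $I_c(t) = \int e^{-2c|\xi|^{2\alpha}t}\abs{\fourier{u_0}}^2\,d\xi$.

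\emph{Upper bound.} Set $\rho(t) = t^{-1/(2\alpha)}$ and split $\RRn$ into $B(\rho(t))$ and its complement. On the ball, the decay character gives $\int_{B(\rho)}\abs{\fourier{u_0}}^2 \le C\rho^{n+2\gamma^*}$ for small $\rho$, since $P_{\gamma^*}(u_0)<\infty$. This yields $C t^{-(n/2+\gamma^*)/\alpha}$. Off the ball I would use a dyadic decomposition into shells $2^k\rho\le|\xi|\le 2^{k+1}\rho$. On each shell, $e^{-2c|\xi|^{2\alpha}t}\le e^{-2c2^{2\alpha k}}$, while the mass is at most $C(2^{k+1}\rho)^{n+2\gamma^*}$ as long as $2^{k+1}\rho\le\rho_0$ (a fixed radius where the limit bound holds). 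For shells beyond $\rho_0$, the mass is at most $\norm{u_0}_2^2$ and the weight is $e^{-2c\rho_0^{2\alpha}t}$, which decays exponentially. Summing the geometric-times-superexponential series gives $I_c(t)\le C t^{-(n/2+\gamma^*)/\alpha}$ for $t\ge1$. Combined with $I_c\le\norm{u_0}_2^2$, this yields the $(1+t)$ form. One could alternatively use Fourier splitting with the energy inequality $\frac{d}{dt}\norm{u}^2\le -2C\int|\xi|^{2\alpha}\abs{\fourier{u}}^2$, but the direct splitting is cleaner.

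\emph{Lower bound.} I restrict to the ball: $I_{C'}(t)\ge e^{-2C'}\int_{B(\rho(t))}\abs{\fourier{u_0}}^2\,d\xi$. Since $P_{\gamma^*}(u_0)>0$, for $\rho$ small this is $\ge \tfrac12 P_{\gamma^*}\rho^{n+2\gamma^*} = c\,t^{-(n/2+\gamma^*)/\alpha}$. For bounded $t$, positivity and continuity of $\norm{u(t)}$ handle the rest, with $u_0\ne0$ guaranteed because $P_{\gamma^*}>0$.

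The main obstacle is the upper-bound tail: the decay character only controls mass in small balls asymptotically, so the dyadic summation must be organized to use the scaling bound up to $\rho_0$ and the crude $L^2$ bound beyond it. The eigenvalue lower bound $c_i>C$ is what makes the exponential factor in the tail uniform. I would also check the case $\gamma^*\in(-n/2,0)$, where $n+2\gamma^*>0$ still keeps the series convergent.
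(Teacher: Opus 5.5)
The paper does not prove this statement. It quotes it from Niche and Schonbek \cite{CESARELENA2015}, who generalize Bjorland and Schonbek \cite{bjorland2009}, so there is no in-paper argument to compare with, and your proof has to stand on its own. It does.

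Your key steps all hold:
\begin{itemize}
\item The reduction through $P(\xi)\in O(n)$ to the scalar integrals $I_c(t)$ is valid, since a real orthogonal matrix is unitary on $\mathbb{C}^n$.
\item The dyadic sum converges, because the geometric factor $2^{k(n+2\gamma^*)}$ is beaten by $e^{-2c\,2^{2\alpha k}}$.
\item The region $\abs{\xi}\ge\rho_0$ contributes at most $e^{-2c\rho_0^{2\alpha}t}\norm{u_0}_2^2$.
\item The lower bound from the ball $B(t^{-1/(2\alpha)})$, where the weight is at least $e^{-2C'}$, combined with positivity for bounded $t$, is exactly right.
\end{itemize}
One small correction: you need not assume an upper bound on the $c_i$. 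They are $n$ constants, so $C'=\max_i c_i$ exists automatically, and it is needed only for the lower bound.

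The standard proof in the cited literature gets the upper bound differently, by Fourier splitting:
\begin{itemize}
\item start from $\frac{d}{dt}\norm{u(t)}_2^2\le -2C\int\abs{\xi}^{2\alpha}\abs{\fourier{u}(\xi,t)}^2\,d\xi$;
\item split at a radius with $\rho(t)^{2\alpha}\sim m/(1+t)$;
\item bound the low frequencies by $\abs{\fourier{u}(\xi,t)}\le\abs{\fourier{u_0}(\xi)}$ together with the decay-character estimate on the ball;
\item integrate with the factor $(1+t)^m$.
\end{itemize}
The lower bound there is the same ball estimate you use.

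Fourier splitting never has to sum the high-frequency tail, because the dissipation absorbs it. It needs only an energy inequality plus low-frequency control. That is why the paper runs the same mechanism for the nonlinear problems (Lemma \ref{decay_u_lema_2} and the proof of Theorem \ref{Teorema_Decay_Asintotica}). There no explicit semigroup is available, and the low-frequency bound comes from Duhamel's formula instead.

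Your direct route depends on the explicit solution formula, so it does not extend to those settings. In exchange it is fully elementary, and it derives both bounds from a single representation of $\fourier{u}(\xi,t)$.
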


 In Definitions \ref{decayind} and \ref{decaych}, we assumed that the limit \eqref{indicator} exists. However, Brandolese \cite{brandolese2016} proved  that there exist functions in $L^2(\RR^n)$ such that \eqref{indicator} is not well-defined. Such functions can be constructed by introducing fast oscillations  around the origin in the frequency space, see Example 6.1 in Brandolese \cite{brandolese2016}. To circumvent this problem, he gave generalized definitions of upper and lower decay character for functions such that \eqref{indicator} does not exist. These definitions are characterized in terms of  subsets of homogeneous Besov spaces ${\dot B}_{2, \infty}^{-\sigma}$, see Brandolese \cite{brandolese2016}. 

We now prove a simple, but important, Lemma concerning the decay character of $f \pm g$, where $f, g \in L^2(\RR^n)$.

\begin{lemma} \label{suma_decay_character}
    Let $f, g \in L^2(\RR^n)$, with decay characters $r^{\ast} (f), r^{\ast} (g)$ respectively. Then

    \begin{displaymath}
        r^{\ast} (f \pm g) \leq \min \left\{ r^{\ast} (f), r^{\ast} (g)\right\}.
    \end{displaymath}
\end{lemma}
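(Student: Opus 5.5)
We argue by contradiction, working directly with the decay indicator of Definition \ref{decayind}. Let $h = f \pm g$. By symmetry we may assume $r := r^{\ast}(f) \leq r^{\ast}(g)$, so the claim becomes $r^{\ast}(h) \leq r$. Since Definition \ref{decaych} presupposes that $r^{\ast}(h)$ exists, suppose instead that $r^{\ast}(h) > r$. The first step is the monotonicity of the indicator in $\gamma$. If $0<P_{\gamma_0}(u)<\infty$ and $\gamma < \gamma_0$, then
\begin{equation*}
\rho^{-2\gamma-n}\int_{B(\rho)} |\fourier{u}|^2 \, d\xi = \rho^{2(\gamma_0-\gamma)} \, \rho^{-2\gamma_0-n}\int_{B(\rho)} |\fourier{u}|^2 \, d\xi \longrightarrow 0 .
\end{equation*}
Hence $P_\gamma(u) = 0$ for every $\gamma < r^{\ast}(u)$, and likewise $P_\gamma(u)=\infty$ for $\gamma > r^{\ast}(u)$. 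Applied to $u=h$, the assumption $r^{\ast}(h)>r$ gives $P_r(h) = 0$.

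The second step writes $f = h \mp g$ and uses $|a+b|^2 \leq 2|a|^2 + 2|b|^2$ pointwise in frequency:
\begin{equation*}
\rho^{-2r-n}\int_{B(\rho)} |\fourier{f}|^2 \, d\xi \leq 2\rho^{-2r-n}\int_{B(\rho)} |\fourier{h}|^2 \, d\xi + 2\rho^{-2r-n}\int_{B(\rho)} |\fourier{g}|^2 \, d\xi .
\end{equation*}
As $\rho \to 0^{+}$, the first term on the right tends to $0$. In the case $r^{\ast}(g) > r$, the first step also gives $P_r(g)=0$, so the second term tends to $0$ as well. Therefore $P_r(f)=0$, which contradicts $0<P_r(f)<\infty$. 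This proves $r^{\ast}(h) \leq r = \min\{r^{\ast}(f),r^{\ast}(g)\}$ whenever $r^{\ast}(f)\neq r^{\ast}(g)$. The same estimate, with the roles of $f$ and $h$ exchanged, also yields $P_r(h)<\infty$, so in this case $r^{\ast}(h)$ in fact equals the minimum.

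The main obstacle is the case $r^{\ast}(f) = r^{\ast}(g) = r$. There the inequality above only gives $P_r(f) \leq 2P_r(g)$, which yields no contradiction. Indeed, cancellation of the low frequencies of $\fourier{f} \pm \fourier{g}$ near $\xi = 0$ can genuinely raise the decay character; the extreme example is $g = f$ with the minus sign. We would therefore handle this case by adding the non-cancellation condition the statement implicitly relies on, namely $P_r(f \pm g) > 0$ (for instance $\liminf_{\rho\to0}\rho^{-2r-n}\int_{B(\rho)}|\fourier{f}\pm\fourier{g}|^2 \, d\xi>0$). Under that condition, the first step immediately rules out $r^{\ast}(h) > r$. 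In the applications later in the paper (to $\omega_0 = \theta_0 - \theta_{st}$), we would check that such cancellation does not occur, or simply record it as an assumption on $\theta_0$.
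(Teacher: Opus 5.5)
Your argument is correct as far as it goes, and it takes a different route from the paper. You argue by contradiction through $f=h\mp g$. The paper instead bounds $\rho^{-2\gamma-n}\int_{B(\rho)}|\widehat{f\pm g}|^2\,d\xi$ above by $C$ times the same quantities for $f$ and $g$ at $\gamma=\min\{r^*(f),r^*(g)\}$ and passes to the limit.

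As your exchanged-roles remark shows, that upper bound only yields $P_\gamma(f\pm g)<\infty$, i.e. $r^*(f\pm g)\ge\min\{r^*(f),r^*(g)\}$. This is the reverse of the stated inequality, so the paper's proof does not establish the lemma as written. Your contradiction argument is what actually gives $\le$ (indeed equality) when $r^*(f)\neq r^*(g)$.

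You are also right that $\le$ fails in general when $r^*(f)=r^*(g)$. However, $g=f$ is not a valid witness: $P_\gamma(0)=0$ for every $\gamma$, so the zero function has no decay character. A genuine counterexample: for $r>-\frac n2$ take $\widehat{f}=(|\xi|^{r}+|\xi|^{r+1})\chi_{B(1)}$ and $\widehat{g}=|\xi|^{r}\chi_{B(1)}$, which are real, radial and in $L^2$. Then $r^*(f)=r^*(g)=r$, but $\widehat{f-g}=|\xi|^{r+1}\chi_{B(1)}$, so $r^*(f-g)=r+1$.

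Where you go wrong is the remedy you propose for the application. In the proof of Theorem \ref{Teorema_Decay_Asintotica}, Theorem \ref{th2.5} is applied to $\omega_0$, so Fourier splitting gives $\|\omega(t)\|_{L^2}^2\le C(1+t)^{-\min\{\frac32,\frac32+r^*(\omega_0)\}}$. To replace $r^*(\omega_0)$ by $m=\min\{r^*(\theta_0),r^*(\theta_{st})\}$ one needs $r^*(\omega_0)\ge m$, not $r^*(\omega_0)\le m$. That lower bound is exactly what the paper's estimate proves, equivalently your bound $P_r(h)\le 2P_r(f)+2P_r(g)<\infty$, assuming $r^*(\omega_0)$ exists. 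It needs no non-cancellation hypothesis: cancellation in $\widehat{\theta_0}-\widehat{\theta_{st}}$ only raises $r^*(\omega_0)$ and speeds up the decay.

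So rather than imposing an assumption on $\theta_0$, the correct fix is to restate the lemma as $r^*(f\pm g)\ge\min\{r^*(f),r^*(g)\}$, with equality when $r^*(f)\neq r^*(g)$. Your two estimates together prove exactly this.
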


\begin{proof}[Proof.] We have 

\begin{displaymath}
    \rho^{-2\gamma}\rho^{-n}\int_{B(\rho)}\abs{\fourier{(f \pm g)}(\rho)}^2 d\rho    \leq C \rho^{-2 \gamma}\rho^{-n}\int_{B(\rho)}\abs{\fourier{f}(\rho)}^2 d \rho + C \rho^{-2 \gamma}\rho^{-n}\int_{B(\rho)}\abs{\fourier{g}(\rho)}^2 d \rho.
\end{displaymath}
If $\gamma = \min \left\{ r^{\ast} (f), r^{\ast} (g)\right\}$, the result follows after taking limit when $\rho$ goes to zero and using the definition of decay character. 
\end{proof}

\section{Existence of solution to the Stationary DPM equation \eqref{Estacionaria}}

In this section we will prove the existence of a unique weak solution to \eqref{Estacionaria} under some constraints related to the size of the external force $f$. To address this problem, we first construct the following  linear parabolic PDE, whose structure is  analogous to \eqref{Parabolica}

\begin{equation} \label{auxiliar1}
 \begin{split}
        u_{t}^{i+1}(t) + V^i \cdot \nabla  u^{i+1}(t) &= \Delta  u^{i+1}(t) - V^i \cdot \nabla \Phi (t),  \\
        V^i & = -(\nabla P + \theta_{st}^{i} \ e_3),\\
        u^{i+1}(0)&=0,
\end{split}    
\end{equation}
for given stationary scalar $\theta_{st}^{i}$ and vector field $V^i$. In Theorem \ref{th2}, we prove existence of weak solutions to \eqref{auxiliar1}.

We also consider the following PDE  

\begin{equation} \label{auxiliar2}
            \begin{cases} 
         &V^{i}\cdot\nabla \theta_{st}^{i+1} = \Delta \theta_{st}^{i+1} + f, \\
         &V^{i}= -(\nabla P + \theta_{st}^{i} \ e_3),\\
         &\nabla \cdot V^{i} = 0,
            \end{cases}
\end{equation}
 where again $\theta_{st}^{i}$ is a known stationary scalar. In Theorem \ref{th1}, we prove the existence of a weak solution to \eqref{auxiliar2}. Then, to the solution to \eqref{auxiliar1} we add a corrector term given by the solution to the heat equation with initial datum $f$, which integrated in time provide another solution to \eqref{auxiliar2}. Uniqueness of solution given by Theorem \ref{th1} implies that the solution to \eqref{auxiliar2} is  actually  this integral.
Therefore, for each $i \in \N$ we obtain a solution $\theta_{st}^{i+1}$ to \eqref{auxiliar2} and with this we construct a sequence ${\theta_{st}^{i+1}}$ that converges to a solution to \eqref{Estacionaria}. This method is motivated by the well known  observation that integrating in time the solution to the heat equation yields a solution to the Poisson equation.

A useful result regarding the $L^p$ norm of the term $V_{\theta}$, proved by Niche and Orive \cite{Niche-Orive}, states that 

    \begin{equation} \label{remark2}
        \norm{V_{\theta}(t)}_{L^p} \leq C  \norm{\theta(t)}_{L^p}, \hspace{0,5 cm} 1< p < \infty.
    \end{equation}
Analogously,

\begin{equation*} 
        \norm{V^{i}}_{L^p} \leq C  \norm{\theta^{i}_{st}(t)}_{L^p}, \hspace{0,5 cm} 1< p < \infty.
    \end{equation*}

\begin{theorem} \label{th1}
Let $f\in \dot{H}^{-1}(\RR^3)$ and $\theta_{st}^{i}\in \dot{H}^{1}(\RR^3) \cap L^2(\RR^3)$ be a fixed scalar field. Then there exists a unique weak solution $\theta_{st}^{i+1}\in \dot{H}^{1}(\RR^3)$ to \eqref{auxiliar2}, in the sense that for any $ \varphi \in \dot{H}^1(\RR^3)$

\begin{equation*}
    \escalar{ V^{i}\cdot\nabla{\theta_{st}^{i+1}}, \varphi} = - \escalar{ \nabla  \theta_{st}^{i+1}, \nabla \varphi} + \escalar{f, \varphi}.
\end{equation*}
Moreover, this solution satisfies

\begin{equation} \label{E3.1}
   \norm{\nabla \theta_{st}^{i+1}}_{L^2} \leq \norm{f}_X. 
\end{equation}
\end{theorem}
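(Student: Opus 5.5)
This is a Lax–Milgram argument on $\dot{H}^1(\RR^3)$. The key point is that the convective term is antisymmetric because $\nabla\cdot V^i=0$.

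We define the bilinear form
\begin{equation*}
B(u,\varphi) = \escalar{\nabla u, \nabla \varphi} + \escalar{V^i\cdot\nabla u, \varphi}, \qquad u,\varphi \in \dot{H}^1(\RR^3),
\end{equation*}
so that a weak solution of \eqref{auxiliar2} is exactly a $u$ with $B(u,\varphi)=\escalar{f,\varphi}_{\dot{H}^{-1},\dot{H}^1}$ for all $\varphi$. The map $\varphi\mapsto \escalar{f,\varphi}$ is bounded on $\dot H^1$ with norm $\norm{f}_{\dot H^{-1}}\le \norm{f}_X$.

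\textbf{Continuity.} We show $B$ is continuous on $\dot H^1\times\dot H^1$. By H\"older,
\begin{equation*}
\abs{\escalar{V^i\cdot\nabla u,\varphi}}\le \norm{V^i}_{L^3}\norm{\nabla u}_{L^2}\norm{\varphi}_{L^6}.
\end{equation*}
The Sobolev embedding $\dot H^1(\RR^3)\hookrightarrow L^6$ gives $\norm{\varphi}_{L^6}\le C\norm{\nabla\varphi}_{L^2}$. By \eqref{remark2} with $p=3$, $\norm{V^i}_{L^3}\le C\norm{\theta^i_{st}}_{L^3}$. Interpolating between $L^2$ and $L^6$ (the latter controlled by $\dot H^1$) gives
\begin{equation*}
\norm{\theta^i_{st}}_{L^3}\le C\norm{\theta^i_{st}}_{L^2}^{1/2}\norm{\nabla\theta^i_{st}}_{L^2}^{1/2}<\infty.
\end{equation*}
This is where the hypothesis $\theta^i_{st}\in L^2\cap\dot H^1$ is used.

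\textbf{Coercivity.} We show $B(u,u)=\norm{\nabla u}_{L^2}^2$, i.e.\ $\escalar{V^i\cdot\nabla u,u}=0$. For $u\in C_c^\infty$ this holds because
\begin{equation*}
\escalar{V^i\cdot\nabla u,u}=\tfrac12\int V^i\cdot\nabla(u^2)=-\tfrac12\int(\nabla\cdot V^i)\,u^2=0.
\end{equation*}
It extends to all of $\dot H^1$ by density, since $B$ is continuous by the previous step. This density step is the only delicate point. The trilinear form $(V,u,\varphi)\mapsto\escalar{V\cdot\nabla u,\varphi}$ is continuous in $L^3\times\dot H^1\times\dot H^1$, so approximating $u$ by $u_n\in C_c^\infty$ in $\dot H^1$ passes the identity to the limit. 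We do not need to approximate $V^i$, since the integration by parts only uses $\nabla\cdot V^i=0$ in the distributional sense together with $u_n^2\in W^{1,1}$ having compact support.

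\textbf{Existence, uniqueness and the bound.} Lax--Milgram, which does not require symmetry, now yields a unique $\theta^{i+1}_{st}\in\dot H^1$ with $B(\theta^{i+1}_{st},\varphi)=\escalar{f,\varphi}$ for all $\varphi$. Testing with $\varphi=\theta^{i+1}_{st}$ and using coercivity gives
\begin{equation*}
\norm{\nabla\theta^{i+1}_{st}}_{L^2}^2=\escalar{f,\theta^{i+1}_{st}}\le\norm{f}_{\dot H^{-1}}\norm{\nabla\theta^{i+1}_{st}}_{L^2}.
\end{equation*}
Hence $\norm{\nabla\theta^{i+1}_{st}}_{L^2}\le\norm{f}_{\dot H^{-1}}\le\norm{f}_X$, which is \eqref{E3.1}.

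Uniqueness also follows directly. The difference $w$ of two solutions satisfies $B(w,\varphi)=0$ for all $\varphi$, and taking $\varphi=w$ gives $\norm{\nabla w}_{L^2}=0$, so $w=0$ in $\dot H^1$.
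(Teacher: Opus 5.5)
Your proof is correct, but it takes a different route from the paper's.

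\textbf{The paper's route.} The paper builds the solution by a Galerkin scheme using Dirichlet eigenfunctions on bounded smooth domains. It gets a uniform $\dot{H}^{1}_0$ bound from the cancellation of the convective term. It then exhausts $\RR^3$ by balls $B_n$ and extends by zero. To pass to the limit in $\escalar{V^i\cdot\nabla\theta_n,\varphi}$ for $\varphi\in C^{\infty}_c(\RR^3)$, it uses weak $\dot{H}^{1}$ compactness together with a diagonal argument giving strong $L^2_{\mathrm{loc}}$ convergence. Uniqueness is the energy argument, and \eqref{E3.1} comes from weak lower semicontinuity.

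\textbf{Your route.} You apply Lax--Milgram directly on $\dot{H}^{1}(\RR^3)$. This reduces everything to two facts:
\begin{itemize}
\item boundedness of the trilinear form on $L^3\times\dot{H}^{1}\times\dot{H}^{1}$, via $\theta^i_{st}\in L^2\cap\dot{H}^{1}\subset L^3$ and \eqref{remark2};
\item the identity $\escalar{V^i\cdot\nabla u,u}=0$ on all of $\dot{H}^{1}$, which you justify by density. The paper uses this identity in its uniqueness step without proving it.
\end{itemize}

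\textbf{What each approach buys.} Your route is much shorter and needs no compactness or exhaustion. It gives existence, uniqueness and the estimate in one stroke. It also yields the natural bound $\norm{\nabla\theta^{i+1}_{st}}_{L^2}\le\norm{f}_{\dot{H}^{-1}}$, which makes sense under the stated hypothesis $f\in\dot{H}^{-1}$ alone, whereas $\norm{f}_X$ also involves $\norm{f}_{L^2}$.

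The Galerkin/exhaustion approach is heavier here. Its finite-dimensional system is a linear algebraic system, not an ODE system, and its unique solvability itself rests on exactly the coercivity you exploit. Its advantage is that it is the template the paper reuses for the parabolic problem \eqref{auxiliar1} in Theorem \ref{th2}, where a stationary Lax--Milgram argument does not apply directly.
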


\begin{proof}
We use the Galerkin approximation method to prove the existence of weak solutions in smooth bounded domains $\Omega \subset \RR^3$ with Dirichlet boundary conditions. In order to prove the existence of weak solutions  in the whole space we construct a sequence of solutions defined on  balls with increasing radii and show its convergence.  

Let $\mathcal{W}= \llaves{w_k}_{k \in \N}$ be the set of eigenfunction associated to the Laplace Operator in $L^2(\Omega)$ with Dirichlet boundary conditions. We recall it  is an orthonormal basis for $L^2 (\Omega)$; an orthogonal basis  for  $\dot{H}^{1}_0(\Omega)$ and $w_k \in C^{\infty}(\Omega)$ for all $k \in \N$. For every $N \in \N$ we define

\begin{equation*}
    W_N = span \llaves{w_k}_{k=1,\dots, N},
\end{equation*}
and consider the projection of  \eqref{auxiliar2} onto the subspace $W_N$. Let us define, for  fixed  $N \in \N$,

\begin{equation*}
   (\theta_{st})^{i+1}_{N}(x)= \sum_{k=1}^{N} c_k w_k(x),
\end{equation*}
then for each $s=1,\dots, N$ define

\begin{equation} \label{3.1.2}
    \escalar{ V^{i}\cdot\nabla{(\theta_{st})^{i+1}_{N}, w_s }}  + \escalar{ \nabla  \parentesis{\theta_{st}}_N^{i+1}, \nabla w_s} = \escalar{f, w_s}.
\end{equation}
This is a system of ordinary differential equations in the finite-dimensional space $W_N$  which has a unique solution $(\theta_{st})^{i+1}_{N}$. Since $\nabla  \cdot V^{i}=0$ it follows that sequence $\llaves{(\theta_{st})^{i+1}_{N}}_{N \in \N}$ satisfies the following  inequality

\begin{equation} \label{energy_N}
\norm{\nabla (\theta_{st})^{i+1}_{N}}_{L^2} \leq C\norm{f}_{X}.
\end{equation}
Then $\llaves{(\theta_{st})^{i+1}_{N}}_{N \in \N}$ is uniformly bounded in $\dot{H}_{0}^{1}(\Omega)$, and  we obtain a subsequence $\llaves{(\theta_{st})^{i+1}_{N}}_{N \in \N}$, that we continue denoting in the same way\footnote{From now on we omit saying that we rename the subsequence.}, such that

\begin{equation} \label{conv_debil}
(\theta_{st})^{i+1}_{N} \to \theta_{st}^{i+1} \hspace{0.5 cm} \text{strongly in $L^2(\Omega)$}, \hspace{1 cm} (\theta_{st})^{i+1}_{N} \to \theta_{st}^{i+1} \hspace{0.5 cm} \text{weakly in $\dot{H}^1_{0}(\Omega)$}.
\end{equation}
Notice that $(\theta_{st})^{i+1}_{N}$ satisfies

\begin{equation*}
   \escalar{ V^{i}\cdot\nabla{(\theta_{st})_N^{i+1}}, \varphi_m }  + \escalar{ \nabla  (\theta_{st})_N^{i+1}, \nabla \varphi_m} = \escalar{f, \varphi_m}
\end{equation*} 
for all $\varphi_m \in  W_m$. Then, due to \eqref{conv_debil}, we have that

\begin{equation} \label{form_debil}
   \escalar{ V^{i}\cdot\nabla \theta_{st}^{i+1}, \varphi_m }  + \escalar{ \nabla  \theta_{st}^{i+1}, \nabla \varphi_m} = \escalar{f, \varphi_m}
\end{equation}
for all $\varphi_m \in  W_m$.

Since $\mathcal{W}$ is an orthogonal basis for $\dot{H}^{1}_{0}(\Omega)$, each $\varphi \in \dot{H}^{1}_{0}(\Omega)$ can be written as

\begin{equation*}
\varphi (x) = \lim_{m \to \infty} \sum_{s=1}^{m} b_s w_s
\end{equation*}
Using this strong convergence in \eqref{form_debil} we obtain

\begin{equation*} 
   \escalar{ V^{i}\cdot\nabla \theta_{st}^{i+1}, \varphi }  + \escalar{ \nabla  \theta_{st}^{i+1}, \nabla \varphi} = \escalar{f, \varphi}
\end{equation*}
for all $\varphi \in \dot{H}^{1}_{0}(\Omega)$.

We now extend the result to the whole space as follows. For $n \in \N$ define $B_n= B(0,R_n)$, where $\llaves{R_n}$ is an increasing sequence of positive real numbers such that $R_n \to \infty$, when $n \to \infty$.

Since $B_n$ is a smooth bounded domain of $\RR^3$  there is a unique solution $(\hat{{\theta}}_{st})^{i+1}_n$ of the Dirichlet problem over $B_n$. We can extend this function to all $\RR^3$ as follows

\begin{equation*}
   (\theta_{st})^{i+1}_{n}(x)= \begin{cases}
        (\hat{\theta}_{st})^{i+1}_{n}(x), \hspace{1 cm} &\text{in} \hspace{0,1 cm} B_n,\\
        0, \hspace{1 cm} &\text{elsewhere}.
    \end{cases}
\end{equation*}
Since we have Dirichlet boundary conditions, it follows that

\begin{equation*}
\begin{split}
 \norm{ (\theta_{st})^{i+1}_{n}}_{\dot{H}^{1}(\RR^3)}^2 = \norm{ (\hat{\theta}_{st})^{i+1}_{n}}_{\dot{H}^{1}(B_n)}^2,
 \end{split}   
\end{equation*}
then $ (\theta_{st})^{i+1}_{n} \in \dot{H}^{1}(\RR^3)$, for all $n \in \N$. As a result

\begin{equation} \label{1.2.3}
    \norm{\nabla   (\theta_{st})^{i+1}_{n}}_{L^2(\RR^3)} = \norm{ (\nabla \hat{\theta}_{st})^{i+1}_{n}}_{L^2(B_n)}^2
  \leq \norm{f}_X, \hspace{0,5 cm} n\in \N.
\end{equation}
So $\llaves{ (\theta_{st})^{i+1}_{n}}_{n \in \N}$ is a uniformly bounded sequence in $\dot{H}^1(\RR^3)$ and there is a subsequence such that

\begin{equation} \label{T1E2}
    (\theta_{st})^{i+1}_{n} \rightharpoonup (\theta_{st})^{i+1} \hspace{0,1 cm} \text{weakly in } \hspace{0,1 cm} \dot{H}^1(\RR^3).  
\end{equation}
Now, for each $M \in \N$ $\llaves{ (\theta_{st})^{i+1}_{n} |_{B_M}}_{n \in \N}$ is uniformly bounded. Then, there is a subsequence such that 

\begin{equation*}
    (\theta_{st})^{i+1}_{n} \to \theta_{st}^{i+1} \hspace{0,1 cm} \text{strongly in } \hspace{0,1 cm} L^2(B_M).  
\end{equation*}
So, for $M=1$ there is a subsequence $\llaves{ (\theta_{st})_{n,1}^{i+1} |_{B_1}}$ of $\llaves{ (\theta_{st})^{i+1}_{n} |_{B_1}}$  such that 

\begin{equation*}
    (\theta_{st})^{i+1}_{n,1} \to (\theta_{st})_{1}^{i+1} \hspace{0,1 cm} \text{strongly in } \hspace{0,1 cm} L^2(B_1).  
\end{equation*}
Using a diagonal argument we have that there is a subsequence $\llaves{ (\theta_{st})_{n}^{i+1}}_{n \in \N}$ that converges  strongly in $L^2(B_M)$, for all $M\in \N$. From \eqref{T1E2} it also follows, taking a subsequence if necessary, that  $\llaves{ (\theta_{st})_{n}^{i+1}}_{n \in \N}$ converges weakly in $\dot{H}^1(\RR^3)$ to $(\theta_{st})^{i+1}$.

Now, let $\varphi_{n_0} \in C^{\infty}_c (\RR^3)$ such that $\text{supp} (\varphi_{n_0}) \subset B_{n_0}$ for some sufficiently large $n_0 \in \N$. For $m > n_0$ we have that $B_{n_0} \subset B_m$, so $\varphi_{n_0} \in C^{\infty}_c (B_m)$. Then, since $(\theta_{st})^{i+1}_{n}$ is a weak solution to the equation in $B_m$, we get

\begin{equation*}
    \escalar{ V^{i}\cdot\nabla{(\theta_{st})^{i+1}_{n}}, \varphi_{n_0} }  + \escalar{ \nabla  (\theta_{st})^{i+1}_{n}, \nabla \varphi_{n_0}} = \escalar{f, \varphi_{n_0}}.
\end{equation*}
Now, since $\llaves{(\theta_{st})^{i+1}_{n}}_{n \in \N}$  converges weakly to $(\theta_{st})^{i+1}$ in $\dot{H}^1(\RR^3)$ and $\varphi_{n_0} \in C^{\infty}_c (\RR^3)$, we obtain  

\begin{equation*}
    \escalar{ \nabla  (\theta_{st})^{i+1}_{n}, \nabla \varphi_{n_0}} \to \escalar{ \nabla  (\theta_{st})^{i+1}, \nabla \varphi_{n_0}}, \hspace{0,2 cm} \text{when} \hspace{0,2 cm} n \to \infty.
\end{equation*}
On the other hand, we notice that 

\begin{align*}
     \abs{\escalar{ V^{i}\cdot\nabla\parentesis{(\theta_{st})^{i+1}_{n}-(\theta_{st})^{i+1}}, \varphi_{n_0} }} &= \abs{\escalar{ V^{i}\cdot\nabla \varphi_{n_0}, (\theta_{st})^{i+1}_{n}-(\theta_{st})^{i+1}}}\\
     &\leq C\norm{V^i}_{L^2(B_m)} \norm{ (\theta_{st})^{i+1}_{n}-(\theta_{st})^{i+1}}_{L^2(B_m)}\\ 
     &\leq C\norm{(\theta_{st})^i}_{L^2(B_m)} \norm{ (\theta_{st})^{i+1}_{n}-(\theta_{st})^{i+1}}_{L^2(B_m)},  
\end{align*}
where $C= \sup_{\text{supp}(\varphi_{n_0})} \abs{\nabla \varphi_{n_0}}$. Since $\llaves{(\theta_{st})^{i+1}_{n}}_{n \in \N}$  converges strongly to $(\theta_{st})^{i+1}$ in $L^2(B_m)$ for all $m \in \N$, it follows that 

\begin{equation*}
    \lim_{n \to \infty} \escalar{ V^{i}\cdot\nabla\parentesis{(\theta_{st})^{i+1}_{n}-(\theta_{st})^{i+1}}, \varphi_{n_0} }_{L^2} = 0 \hspace{0,5 cm} \text{in $B_m$},
\end{equation*}
for all $m \in \N$. So we proved that for any $\varphi_{n_0} \in C^{\infty}_{c}(\RR^3)$
\begin{equation*}
\begin{split}
  \escalar{ V^{i}\cdot\nabla{(\theta_{st})^{i+1}}, \varphi_{n_0}}  + \escalar{ \nabla  (\theta_{st})^{i+1}, \nabla \varphi_{n_0}}&= \lim_{n \to \infty} \parentesis{\escalar{ V^{i}\cdot\nabla{(\theta_{st})^{i+1}_{n}}, \varphi_{n_0} }  + \escalar{ \nabla  (\theta_{st})^{i+1}_{n}, \nabla \varphi_{n_0}}}\\
  &= \escalar{f, \varphi_{n_0}}.
\end{split} 
\end{equation*}
Now, since $\dot{H}^{1}(\RR^3) = \overline{C^{\infty}_c(\RR^3)}$, for any $\varphi \in \dot{H}^{1}(\RR^3)$ we have that

\begin{equation} \label{3.1.5}
  \escalar{ V^{i}\cdot\nabla{(\theta_{st})^{i+1}}, \varphi}  + \escalar{ \nabla  (\theta_{st})^{i+1}, \nabla \varphi} = \escalar{f, \varphi}.
\end{equation}
Then $(\theta_{st})^{i+1}$ is a weak solution.

Now, we prove uniqueness of this solution. Let us suppose that there are two weak solutions $(\theta_{st})_1$ and $(\theta_{st})_2$  and define $\tilde{\theta}_{st}= (\theta_{st})_1 -(\theta_{st})_2$. Since $\tilde{\theta}_{st} \in \dot{H}^{1}(\RR^3)$, from \eqref{3.1.5} we have

\begin{equation*}
    \escalar{ V^{i}\cdot\nabla{\tilde{\theta}_{st}}, \tilde{\theta}_{st}}  + \escalar{ \nabla  \tilde{\theta}_{st}, \nabla \tilde{\theta}_{st}} = 0,
\end{equation*}
and, since $ \escalar{ V^{i}\cdot\nabla{\tilde{\theta}_{st}}, \tilde{\theta}_{st}}= 0$, it follows

\begin{equation*}
     \norm{\tilde{\theta}_{st}}_{\dot{H}^1(\RR^3)}=\norm{\nabla \tilde{\theta}_{st}}_{L^2}^2=\escalar{ \nabla  \tilde{\theta}_{st}, \nabla \tilde{\theta}_{st}} = 0.
\end{equation*}
This implies $\tilde{\theta}_{st}=0$ and $(\theta_{st})_1 = (\theta_{st})_2$.

To prove the estimate \eqref{E3.1} we take $\liminf$ in \eqref{1.2.3} and by  the semi-continuity of the norm we get

\begin{equation*}
    \norm{\nabla (\theta_{st})^{i+1}}_{L^2} \leq \norm{f}_X.
\end{equation*}
\end{proof}

\begin{theorem} \label{th2}

Let $\theta_{st}^{i} \in \dot{H}^{1}(\RR^3)\cap L^2(\RR^3)$, satisfy 

\begin{equation*}
  \norm{\nabla \theta_{st}^{i}}_{L^2} \leq \norm{f}_X,
\end{equation*}
where $f \in L^2(\RR^3)$, $\gamma^{*}= \gamma^{*}(f) > 2$, and $ \Phi (t) = e^{\Delta t}f$. There exist a unique weak solution $u^{i+1} \in L^{\infty}(\RR^+,L^2(\RR^{3}))\cap L^{2}(\RR^+,\dot{H}^1(\RR^{3}))$ to \eqref{auxiliar1}, in the sense that  for any  $\varphi \in \dot{H}^{1}_0(\RR^3)$ and a.e $t \in \RR^{+}$,

\begin{equation*}
    \escalar{ u_{t}^{i+1}(t), \varphi}_{L^2} + \escalar{ V^i \cdot \nabla  u^{i+1}(t), \varphi}_{L^2}= - \escalar{ \nabla  u^{i+1}(t), \nabla \varphi}_{L^2} - \escalar{ V^i \cdot \nabla \Phi (t), \varphi}_{L^2}.
\end{equation*}
Also, this solution verifies 

\begin{equation} \label{EA2.1}
    \sup_t \norm{u^{i+1}(t)}_{L^2}^2+ \int_{0}^{\infty} \norm{ \nabla u^{i+1}(s)}_{L^2}^2 ds \leq C \norm{f}_X^3 \corchetes{\mu_{\gamma^{*}} (1+\norm{f}_X^2)}^\frac12,
\end{equation}
where $\mu_{\gamma^{*}}>0$ is a constant.
   
\end{theorem}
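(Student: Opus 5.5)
Our plan is to prove Theorem \ref{th2} with a Galerkin scheme plus an energy estimate, in the spirit of the proof of Theorem \ref{th1}, but now with time dependence. First we work on bounded balls $B_n$ with the Dirichlet eigenbasis $\mathcal{W}$. We look for $u^{i+1}_N(t)=\sum_{k\le N} c_k(t) w_k$ solving the projection of \eqref{auxiliar1} onto $W_N$ with $c_k(0)=0$. This is a linear ODE system whose coefficients are bounded, because $V^i$ is stationary and lies in $L^2$ by \eqref{remark2}, and whose forcing $-\escalar{V^i\cdot\nabla\Phi(t),w_s}$ is locally integrable in time. Hence the system has a global solution on $\RR^+$.

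The core step is the a priori estimate \eqref{EA2.1}, uniform in $N$ and $n$. We test with $u^{i+1}_N$ itself. Since $\nabla\cdot V^i=0$, the transport term vanishes, which gives
\begin{equation*}
\frac12\frac{d}{dt}\norm{u^{i+1}_N}_{L^2}^2+\norm{\nabla u^{i+1}_N}_{L^2}^2=-\escalar{V^i\cdot\nabla\Phi,u^{i+1}_N}=\escalar{V^i\Phi,\nabla u^{i+1}_N},
\end{equation*}
where the last equality again uses incompressibility. We then bound the right-hand side by $\norm{V^i}_{L^6}\norm{\Phi}_{L^3}\norm{\nabla u}_{L^2}$. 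By \eqref{remark2} and the Sobolev embedding, $\norm{V^i}_{L^6}\le C\norm{\theta^i_{st}}_{L^6}\le C\norm{\nabla\theta^i_{st}}_{L^2}\le C\norm{f}_X$. For $\Phi$ we interpolate, $\norm{\Phi}_{L^3}\le \norm{\Phi}_{L^2}^{1/2}\norm{\Phi}_{L^6}^{1/2}\le C\norm{\Phi}_{L^2}^{1/2}\norm{\nabla\Phi}_{L^2}^{1/2}$, and use $\norm{\nabla\Phi(t)}_{L^2}\le\norm{\nabla f}$ or, better, $\norm{\Phi}_{L^2}\le\norm{f}_{L^2}$.

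After Young's inequality we absorb $\frac12\norm{\nabla u}^2$ and integrate in time. This leaves the task of bounding $\norm{f}_X^2\int_0^\infty\norm{\Phi}_{L^3}^2\,dt$. We plan to handle this time integral with Cauchy--Schwarz in time, splitting it as $\parentesis{\int\norm{\Phi}_{L^2}^2}^{1/2}\parentesis{\int\norm{\nabla\Phi}_{L^2}^2}^{1/2}$. The first factor is controlled by Lemma \ref{lema0.1}, giving $\mathcal{M}_1^{1/2}=[\mu_{\gamma^*}(1+\norm{f}_X^2)]^{1/2}$; this is where the assumption $\gamma^*>2$ enters. The second factor is $\le C\norm{f}_{L^2}$ by the energy identity for the heat equation. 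Altogether this produces $C\norm{f}_X^3[\mu_{\gamma^*}(1+\norm{f}_X^2)]^{1/2}$, which is exactly \eqref{EA2.1}. \textbf{This bookkeeping of exponents is the step we expect to be the main obstacle}: the fine point is choosing the Hölder pairing so that exactly one factor of Lemma \ref{lema0.1} appears, and we would adjust the interpolation if the powers of $\norm{f}_X$ do not match.

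With the uniform bounds in $L^\infty L^2\cap L^2\dot H^1$ in hand, we bound $\partial_t u_N$ in $L^2_{loc}(H^{-1})$ and apply Aubin--Lions on each ball. This gives strong local $L^2$ convergence, which is enough to pass to the limit in the term $\escalar{V^i\cdot\nabla u,\varphi}=-\escalar{V^i u,\nabla\varphi}$. We then extend from $B_n$ to $\RR^3$ by the same diagonal argument as in Theorem \ref{th1}, and obtain \eqref{EA2.1} in the limit by lower semicontinuity. For uniqueness, the difference of two solutions solves the homogeneous linear equation with zero initial data; the energy identity, again using $\nabla\cdot V^i=0$, gives $\norm{w(t)}_{L^2}^2\le 0$.
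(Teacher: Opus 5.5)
Your proposal is correct and follows the same route the paper indicates. The paper's proof only states that the result follows from a Galerkin scheme as in Theorem \ref{th1} (balls $B_n$, Dirichlet eigenbasis, diagonal extension to $\RR^3$) adapted to the parabolic setting via Salsa--Verzini, and your energy inequality is exactly the one the paper later writes as \eqref{ec1}. The exponent bookkeeping you flagged as the main obstacle already closes: $\norm{f}_X^2\cdot\mathcal{M}_1^{1/2}\cdot C\norm{f}_{L^2}$ is precisely the right-hand side of \eqref{EA2.1}, and Lemma \ref{lema0.1} only needs $\gamma^*>-\frac12$ here, so the hypothesis $\gamma^*>2$ is more than enough.
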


\begin{proof}

The proof is based on the Galerkin approximation method and is similar to the proof of Theorem \ref{th1} with some modifications due to the parabolic nature of the equation, for details we refer the reader to Chapter 9, Salsa and Verzini \cite{salsa_libro}.

\end{proof}

Having proved the existence of weak solutions to  \eqref{auxiliar1} and \eqref{auxiliar2}, we now will describe and prove some decay properties of  $u^{i+1}$  that will be useful when we study the relation between them.

\subsection{Decay properties}

Our goal now is to prove that the decay of the solution $u^{i+1}$ to \eqref{auxiliar1} is given by

\begin{equation*} 
       \norm{u^{i+1}(t)}_{L^2}^{2} \leq \overline{C} \parentesis{\norm{\theta_{st}^{i}}_{L^{2}}^2+1}^5\parentesis{1+\norm{f}_{X}^2}^2\norm{f}_{X}^{3} (1+t)^{-\frac52},
\end{equation*}
where $\overline{C}=\overline{C}(\gamma^{*}(f))$.  To this end, we use the Fourier Splitting Method. We first prove some Lemmas.

\begin{lemma}  \label{lema1}
 Let $u^{i+1}$ be the solution to equation \eqref{auxiliar1} given by Theorem \ref{th2}. Then,

\begin{equation*} 
    \abs{\fourier{u^{i+1}}(\xi,t)} \leq C\abs{\xi} \norm{\theta_{st}^{i}}_{L^{2}} \parentesis{ \int_{0}^{t} \norm{u^{i+1}(s)}_{L^{2}} ds +\mu_{\gamma^{*}}(1+\norm{f}_{X})}.
\end{equation*}    
\end{lemma}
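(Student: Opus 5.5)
We take the Fourier transform of \eqref{auxiliar1} and use Duhamel's formula. Because $u^{i+1}(0)=0$, we get
\begin{equation*}
\fourier{u^{i+1}}(\xi,t) = -\int_0^t e^{-\abs{\xi}^2(t-s)} \parentesis{\fourier{V^i\cdot\nabla u^{i+1}}(\xi,s) + \fourier{V^i\cdot \nabla \Phi}(\xi,s)}\,ds .
\end{equation*}
Since $\nabla\cdot V^i=0$, both nonlinear terms are in divergence form: $V^i\cdot\nabla u^{i+1} = \nabla\cdot(V^i u^{i+1})$ and $V^i\cdot\nabla\Phi = \nabla\cdot(V^i\Phi)$. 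Hence
\begin{equation*}
\abs{\fourier{\nabla\cdot(V^i g)}(\xi)} \leq \abs{\xi}\,\abs{\fourier{V^i g}(\xi)} \leq \abs{\xi}\,\norm{V^i g}_{L^1} \leq \abs{\xi}\,\norm{V^i}_{L^2}\norm{g}_{L^2}.
\end{equation*}
This is the source of the factor $\abs{\xi}$. Applying \eqref{remark2} with $p=2$ gives $\norm{V^i}_{L^2}\leq C\norm{\theta^i_{st}}_{L^2}$. We bound $e^{-\abs{\xi}^2(t-s)}\leq 1$. At this stage,
\begin{equation*}
\abs{\fourier{u^{i+1}}(\xi,t)} \leq C\abs{\xi}\norm{\theta^i_{st}}_{L^2}\parentesis{\int_0^t\norm{u^{i+1}(s)}_{L^2}ds + \int_0^t \norm{\Phi(s)}_{L^2}ds}.
\end{equation*}
To keep this rigorous at the level of weak solutions, we test the weak formulation against $e^{-ix\cdot\xi}$, or more carefully against truncations of it, and run the argument on the Galerkin approximations before passing to the limit.

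The main obstacle is the forcing term $\int_0^\infty\norm{\Phi(s)}_{L^2}ds$. Lemma \ref{lema0.1} controls only the time integral of $\norm{\Phi}_{L^2}^2$, not of $\norm{\Phi}_{L^2}$. We plan to use Theorem \ref{theorem-decay-heat} with $n=3$, which gives $\norm{\Phi(s)}_{L^2}\leq C(1+s)^{-(3/2+\gamma^*)/2}\norm{f}_{L^2}$. Because $\gamma^*>2$, the exponent exceeds $1$, so the integral is finite and bounded by $C_{\gamma^*}\norm{f}_{L^2}$. Equivalently, we can split the integral on $[0,1]$ and $[1,\infty)$ and apply Cauchy–Schwarz with weight $(1+s)^{\pm}$ together with Lemma \ref{lema0.1}-type bounds.

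Finally, we absorb the constants into $\mu_{\gamma^*}$ and use $\norm{f}_{L^2}\leq \norm{f}_X\leq 1+\norm{f}_X$. This bounds the forcing contribution by $\mu_{\gamma^*}(1+\norm{f}_X)$ and yields the stated estimate. A constant depending on $f$ only through its decay character is implicit here, as in Theorem \ref{theorem-decay-heat}.
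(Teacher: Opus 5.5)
Your proposal is correct and is essentially the paper's own proof: Duhamel's formula with $u^{i+1}(0)=0$, the divergence-form bound $\abs{\xi}\,\norm{V^i}_{L^2}\norm{g}_{L^2}$ obtained from $\norm{\fourier{h}}_{L^\infty}\leq\norm{h}_{L^1}$ and H\"older, and \eqref{remark2} with $p=2$. Your treatment of $\int_0^\infty\norm{\Phi(s)}_{L^2}\,ds$ through the heat decay of Theorem \ref{theorem-decay-heat} (integrable because $\frac12(\frac32+\gamma^*)>1$) is more accurate than the paper's appeal to Lemma \ref{lema0.1}, which only controls $\int_0^\infty\norm{\Phi}_{L^2}^2\,dt$; note, though, that in your version as in the paper's the resulting constant depends on $f$ itself and not merely on $\gamma^*$, since the dilation $f(\lambda x)$ keeps $\gamma^*$ fixed while $\int_0^\infty\norm{\Phi}_{L^2}dt/\norm{f}_{L^2}$ grows like $\lambda^{-2}$ as $\lambda\to0$, so $\mu_{\gamma^*}$ has to be read as $f$-dependent.
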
 

\begin{proof}

Taking Fourier Transform in \eqref{auxiliar1} we get

\begin{equation*}
   \frac{d}{dt} \fourier{u^{i+1}}(\xi,t) + \xi\fourier{V^{i}(\xi)u^{i+1}(\xi,t)} = \abs{\xi}^2 \fourier{u^{i+1}}(\xi,t) - \xi \fourier{V^{i}(\xi)\Phi(\xi,t)}.
\end{equation*}
Solving this ordinary differential equation, and taking into account that $u^{i+1}(0)=0$, we obtain

\begin{equation*}
\begin{split}
    \fourier{u^{i+1}}(\xi,t)&= e^{-\abs{\xi}^2 t} \fourier{u^{i+1}}(\xi,0) - \int_{0}^{t}e^{-\abs{\xi}^2 (t-s)} (\xi\fourier{V^{i}(\xi)u^{i+1}(\xi,s)}+\xi \fourier{V^{i}(\xi)\Phi(\xi,s)}) ds\\
    &=- \int_{0}^{t}e^{-\abs{\xi}^2 (t-s)} (\xi\fourier{V^{i}(\xi)u^{i+1}(\xi,s)}+\xi \fourier{V^{i}(\xi)\Phi(\xi,s)}) \ ds.\\
\end{split}    
\end{equation*}
Then, 

\begin{align*}
    \abs{ \fourier{u^{i+1}}(\xi,t)}  \leq \abs{\xi} \int_{0}^{t} \abs{ \fourier{V^{i}(\xi)u^{i+1}(\xi,s)}+ \fourier{V^{i}(\xi)\Phi(\xi,s)} } ds  \leq \abs{\xi} \int_{0}^{t} \parentesis{\abs{ \fourier{V^{i}(\xi)u^{i+1}(\xi,s)}}+ \abs{\fourier{V^{i}(\xi)\Phi(\xi,s)} }} ds. 
\end{align*}
Estimating each term of the right hand side in the last inequality using that $\norm{\fourier{\varphi}}_{L^{\infty}} \leq \norm{\varphi}_{L^{1}}$, together with the fact that $\norm{V^{i}}_{L^{2}} \leq C \norm{\theta_{st}^{i}}_{L^{2}}$ (see \eqref{remark2}) and the Hölder Inequality we get

\begin{equation*}
\begin{split}
        \abs{ \fourier{V^{i}(\xi)u^{i+1}(\xi,s)}} &\leq \norm{\fourier{V^{i}(\xi)u^{i+1}(\xi,s)}}_{L^{\infty}} \leq \norm{V^{i}u^{i+1}(s)}_{L^{1}}\\
        &\leq \norm{V^{i}}_{L^{2}}\norm{u^{i+1}(s)}_{L^{2}}
        \leq C \norm{\theta_{st}^{i}}_{L^{2}}\norm{u^{i+1}(s)}_{L^{2}},
\end{split}
\end{equation*}
and 

\begin{equation*}
    \abs{ \fourier{V^{i}(\xi)\Phi(\xi,s)}} \leq \norm{\fourier{V^{i}(\xi)\Phi(\xi,s)}}_{L^{\infty}} \leq \norm{V^{i}\Phi (s)}_{L^{1}} \leq \norm{V^{i}}_{L^{2}}\norm{\Phi (s)}_{L^{2}} \leq C\norm{\theta_{st}^{i}}_{L^{2}}\norm{\Phi (s)}_{L^{2}}.
\end{equation*}
Now, from Lemma \ref{lema0.1} and $\gamma^* > 2$,  we have that

\begin{equation*}
     \int_0^{\infty} \norm{\Phi (t)}_{L^2}  dt \leq \mu_{\gamma^{*}}(1+\norm{f}_{L^2}).
\end{equation*}
Then 

\begin{align*}
    \abs{\fourier{u^{i+1}}(\xi,t)}  \leq C \abs{\xi} \norm{\theta_{st}^{i}}_{L^{2}} \parentesis{ \int_{0}^{t} \norm{u^{i+1}(s)}_{L^{2}} ds +\mu_{\gamma^{*}}(1+\norm{f}_{X})} \ ds.
\end{align*}
\end{proof}

Now, using Lemma \ref{lema1}, we establish  a differential inequality for $\norm{u^{i+1}(t)}_{L^2}^{2}$. 

\begin{lemma} \label{decay_u_lema_2} 
    Let $u^{i+1}$ be the solution to equation \eqref{auxiliar1}  given by Theorem \ref{th2}. Then, for $m\geq 3$ we have that

    \begin{equation} \label{estimativa2}
    \begin{split}
        \frac{d}{dt}  \parentesis{\norm{u^{i+1}(t)}_{L^2}^{2} (1+t)^m }   
      &\leq C \norm{f}_{X}^2 \norm{\Phi (t)}_{L^3}^2 (1+t)^m\\ & +  C  \norm{\theta_{st}^{i}}_{L^{2}}^2 \parentesis{ \int_{0}^{t} \norm{u^{i+1}(s)}_{L^{2}} ds +\mu_{\gamma^{*}}(1+\norm{f}_{X})}^2 (1+t)^{m-\frac72}.
    \end{split}        
    \end{equation}
\end{lemma}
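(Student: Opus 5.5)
Our plan is to run the standard Fourier splitting argument, starting from the energy inequality for \eqref{auxiliar1}. Testing \eqref{auxiliar1} with $u^{i+1}$ and using $\nabla\cdot V^i=0$ makes the convective term $\escalar{V^i\cdot\nabla u^{i+1},u^{i+1}}$ vanish, which leaves
\begin{equation*}
\frac12\frac{d}{dt}\norm{u^{i+1}(t)}_{L^2}^2+\norm{\nabla u^{i+1}(t)}_{L^2}^2 = -\escalar{V^i\cdot\nabla\Phi(t),u^{i+1}(t)}=\escalar{V^i\Phi(t),\nabla u^{i+1}(t)}.
\end{equation*}
We then bound the right-hand side by H\"older with exponents $(6,3,2)$ and apply \eqref{remark2} with $p=6$:
\begin{equation*}
\norm{V^i}_{L^6}\norm{\Phi}_{L^3}\norm{\nabla u^{i+1}}_{L^2}\le C\norm{\theta^i_{st}}_{L^6}\norm{\Phi}_{L^3}\norm{\nabla u^{i+1}}_{L^2}.
\end{equation*}
Sobolev gives $\norm{\theta_{st}^i}_{L^6}\le C\norm{\nabla\theta_{st}^i}_{L^2}\le C\norm{f}_X$ by the hypothesis of Theorem \ref{th2}. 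Young's inequality then absorbs half of the dissipation, and we obtain
\begin{equation*}
\frac{d}{dt}\norm{u^{i+1}}_{L^2}^2+\norm{\nabla u^{i+1}}_{L^2}^2\le C\norm{f}_X^2\norm{\Phi(t)}_{L^3}^2.
\end{equation*}
This step produces the first term of \eqref{estimativa2}.

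Next comes the splitting itself. We set $B(t)=\{\xi:\abs{\xi}\le g(t)\}$ with $g(t)^2=\frac{m}{1+t}$. By Plancherel,
\begin{equation*}
\norm{\nabla u^{i+1}}_{L^2}^2\ge g(t)^2\norm{u^{i+1}}_{L^2}^2-g(t)^2\int_{B(t)}\abs{\fourier{u^{i+1}}(\xi,t)}^2d\xi.
\end{equation*}
Substituting this and multiplying by $(1+t)^m$ turns the left side into $\frac{d}{dt}\big(\norm{u^{i+1}}_{L^2}^2(1+t)^m\big)$, because $g^2(1+t)^m=m(1+t)^{m-1}$. On the right we gain the extra term $m(1+t)^{m-1}\int_{B(t)}\abs{\fourier{u^{i+1}}}^2d\xi$. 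The condition $m\ge 3$ is only needed so that this choice of $g$ is admissible in the later integration steps. Here it just fixes the constant $C=C(m)$.

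The last step estimates the low-frequency integral with Lemma \ref{lema1}. We have $\abs{\fourier{u^{i+1}}(\xi,t)}^2\le C\abs{\xi}^2\norm{\theta^i_{st}}_{L^2}^2 A(t)^2$, where $A(t)=\int_0^t\norm{u^{i+1}(s)}_{L^2}ds+\mu_{\gamma^*}(1+\norm{f}_X)$. Integrating over the ball in $\RR^3$,
\begin{equation*}
\int_{B(t)}\abs{\xi}^2d\xi = C g(t)^5 = C(1+t)^{-5/2}.
\end{equation*}
Multiplying by $m(1+t)^{m-1}$ gives $C\norm{\theta^i_{st}}_{L^2}^2A(t)^2(1+t)^{m-\frac72}$, which is exactly the second term of \eqref{estimativa2}.

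We expect the only delicate point to be the rigor of the energy equality for weak solutions. We would argue it at the Galerkin level, or assume it as given by Theorem \ref{th2}. Everything else is routine bookkeeping of exponents: $\frac52+1=\frac72$.
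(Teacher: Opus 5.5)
Your proposal is correct and follows the paper's proof essentially step for step. Both arguments run the energy estimate (H\"older, Sobolev with the hypothesis $\norm{\nabla\theta^i_{st}}_{L^2}\le\norm{f}_X$, and Young to absorb half the dissipation), then apply Fourier splitting with $R(t)^2=m/(1+t)$, where Lemma~\ref{lema1} on the low-frequency ball gives the $R(t)^7$ factor and the integrating factor $(1+t)^m$ turns it into $(1+t)^{m-\frac72}$. Your explicit rewriting as $\escalar{V^i\Phi,\nabla u^{i+1}}$, the $(6,3,2)$ H\"older split, and your remark that $m\ge 3$ plays no role in this lemma simply spell out what the paper does tersely.
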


\begin{proof}
     We start by multiplying equation \eqref{auxiliar1} by $u^{i+1}$. Then, we obtain

\begin{equation*}
     \frac12 \frac{d}{dt} \norm{u^{i+1}(t)}_{L^2}^{2} + \norm{ \nabla u^{i+1}(t)}_{L^2}^{2}  = \escalar{V^{i} \cdot \nabla u^{i+1}(t), \Phi (t)}.
\end{equation*}
By Hölder inequality,  Sobolev Embedding Theorem  and  our hypothesis on $\theta_{st}^i$, we have that

\begin{equation*}
   \frac12 \frac{d}{dt} \norm{u^{i+1}(t)}_{L^2}^{2} +  \norm{ \nabla u^{i+1}(t)}_{L^2}^{2} \leq \frac{C}2 \norm{f}_{X}^2 \norm{\Phi (t)}_{L^3}^2  + \frac12\norm{ \nabla u^{i+1}(t)}_{L^2}^2.
\end{equation*}
Rewriting this last inequality we get

 \begin{equation} \label{ec1}
   \frac{d}{dt} \norm{u^{i+1}(t)}_{L^2}^{2} +  \norm{ \nabla u^{i+1}(t)}_{L^2}^{2} \leq C \norm{f}_{X}^2 \norm{\Phi (t)}_{L^3}^2 . 
 \end{equation}
 At this point we are going to use the Fourier Splitting Method. We start as usual by using Plancherel Theorem and splitting the space into a ball $B(R)$ and its complement, where $R=R(t)$ will be a time dependent radius. Namely,

  \begin{align*}
     -\norm{ \nabla u^{i+1}(t)}_{L^2}^{2}  & =  -\norm{ \fourier{\nabla u^{i+1}(t)}}_{L^2}^{2}      =  -\norm{  \abs{\xi} \fourier{u^{i+1}}(\xi,t)}_{L^2}^{2}\\
      &= -\int_{B(R)} \abs{\xi}^2 \abs{\fourier{u^{i+1}}(\xi,t)}^2 \hspace{0,1 cm} d\xi - \int_{B(R) ^c} \abs{\xi}^2 \abs{\fourier{u^{i+1}}(\xi,t)}^2 \hspace{0,1 cm} d\xi\\
      & \leq  - R(t)^2 \int_{B(R) ^c} \abs{\fourier{u^{i+1}}(\xi,t)}^2 \hspace{0,1 cm} d\xi.
 \end{align*}
 Then, 

\begin{equation*}
    -\norm{ \nabla u^{i+1}(t)}_{L^2}^{2}  + R(t)^2 \norm{\fourier{u^{i+1}(t)}}_{L^2}^2    
    \leq   R(t)^2 \int_{B(R)}  \abs{\fourier{u^{i+1}}(\xi,t)}^2 \hspace{0,1 cm} d\xi. 
\end{equation*}
Using this last inequality in \eqref{ec1} we obtain

\begin{equation*}
     \frac{d}{dt} \norm{u^{i+1}(t)}_{L^2}^{2} + R(t)^2 \norm{u^{i+1}(t)}_{L^2}^2   \leq C \norm{f}_{X}^2 \norm{\Phi (t)}_{L^3}^2  + R(t)^2 \int_{B(R)}  \abs{\fourier{u^{i+1}}(\xi,t)}^2 \hspace{0,1 cm} d\xi . 
\end{equation*}
Now, using Lemma \ref{lema1} in the second term of the right hand side we have that

\begin{align*}
      \frac{d}{dt} &\norm{u^{i+1}(t)}_{L^2}^{2} + R(t)^2 \norm{u^{i+1}(t)}_{L^2}^2\\ 
      & \leq C \norm{f}_{X}^2 \norm{\Phi (t)}_{L^3}^2 + C \norm{\theta_{st}^{i}}_{L^{2}}^2 R(t)^2 \int_{B(R)}  \abs{\xi}^2  \parentesis{ \int_{0}^{t} \norm{u^{i+1}(s)}_{L^{2}} ds +\mu_{\gamma^{*}}(1+\norm{f}_{X})}^2 \hspace{0,1 cm} d\xi\\      
       & \leq  C \norm{f}_{X}^2 \norm{\Phi (t)}_{L^3}^2 +   C R(t)^7  \norm{\theta_{st}^{i}}_{L^{2}}^2 \parentesis{ \int_{0}^{t} \norm{u^{i+1}(s)}_{L^{2}} ds +\mu_{\gamma^{*}}(1+\norm{f}_{X})}^2.
\end{align*}
Finally, setting $R(t)^2 = \dfrac{m}{(1+t)}$ and using  $h(t) = (1+t)^m$ as an integrating factor it follows that

\begin{align*}
    \frac{d}{dt}  \parentesis{\norm{u^{i+1}(t)}_{L^2}^{2} (1+t)^m }  
      & \leq C \norm{f}_{X}^2 \norm{\Phi (t)}_{L^3}^2 (1+t)^m\\
      &+  C  \norm{\theta_{st}^{i}}_{L^{2}}^2 \parentesis{ \int_{0}^{t} \norm{u^{i+1}(s)}_{L^{2}} ds +\mu_{\gamma^{*}}(1+\norm{f}_{X})}^2 (1+t)^{m-\frac72},
\end{align*}
where $C = C(m) > 0$.
\end{proof}

In the following Theorem we will use Lemma \ref{decay_u_lema_2} to estimate the decay of $u^{i+1}$.

\begin{theorem}
    Let $u^{i+1}$ be the solution to \eqref{auxiliar1} given by Theorem \ref{th2}. Then $u^{i+1}$ satisfies the following bound

   \begin{equation} \label{ec3.1}
       \norm{u^{i+1}(t)}_{L^2}^{2} \leq \overline{C}\parentesis{\norm{\theta_{st}^{i}}_{L^{2}}^2+1}^5\parentesis{1+\norm{f}_{X}^2}^2\norm{f}_{X}^{3} (1+t)^{-\frac52},
\end{equation}
where $\overline{C}=\overline{C}(m, \gamma^{*})>0$.
\end{theorem}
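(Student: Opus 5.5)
We integrate the differential inequality \eqref{estimativa2} of Lemma \ref{decay_u_lema_2} from $0$ to $t$, using $u^{i+1}(0)=0$. This gives
\begin{equation*}
\norm{u^{i+1}(t)}_{L^2}^2 (1+t)^m \leq C\norm{f}_X^2\int_0^t \norm{\Phi(s)}_{L^3}^2(1+s)^m\,ds + C\norm{\theta_{st}^i}_{L^2}^2\int_0^t \parentesis{\int_0^s \norm{u^{i+1}(\tau)}_{L^2}d\tau + \mu_{\gamma^{*}}(1+\norm{f}_X)}^2 (1+s)^{m-\frac72}\,ds .
\end{equation*}
The two terms on the right are handled separately. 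Then we divide by $(1+t)^m$ with $m\geq 3$ fixed, which keeps every exponent positive.

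\textbf{The $\Phi$ term.} We interpolate $\norm{\Phi}_{L^3}^2\leq \norm{\Phi}_{L^2}\norm{\nabla\Phi}_{L^2}$ (Gagliardo–Nirenberg in $\RR^3$). Theorem \ref{theorem-decay-heat} gives $\norm{\Phi(s)}_{L^2}^2\leq C(1+s)^{-(\frac32+\gamma^*)}$. For the gradient, $\nabla\Phi$ is again a heat flow, with datum whose decay character is $\gamma^*+1$, so $\norm{\nabla\Phi(s)}_{L^2}^2\leq C(1+s)^{-(\frac52+\gamma^*)}$. Both constants are proportional to $\norm{f}_{L^2}^2$; since $\gamma^*>2$, they could alternatively be obtained by a Fourier splitting computation with the $\dot H^{-1}$ norm. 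Hence $\norm{\Phi(s)}_{L^3}^2\lesssim \norm{f}_X^2(1+s)^{-2-\gamma^*}$. Integrated against $(1+s)^m$, this term is bounded by $C\norm{f}_X^4(1+t)^{m-1-\gamma^*}$, or by $\log$ factors in borderline cases. Because $\gamma^*>2$, this gives at least $(1+t)^{m-5/2}$ after absorbing constants. This is the only place where $\gamma^*>2$ is used essentially.

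\textbf{The self-referential term.} We use the crude bound $\int_0^s\norm{u^{i+1}}_{L^2}\leq s\sup\norm{u^{i+1}}_{L^2}$. By \eqref{EA2.1},
\begin{equation*}
\sup_t\norm{u^{i+1}}_{L^2}\leq C\norm{f}_X^{3/2}\corchetes{\mu_{\gamma^*}(1+\norm{f}_X^2)}^{1/4}.
\end{equation*}
The bracket is then $\lesssim (1+s)\,K$, with $K$ polynomial in $\norm{f}_X$. The integral is therefore $\lesssim \norm{\theta_{st}^i}_{L^2}^2K^2(1+t)^{m-\frac12}$. This only yields $(1+t)^{1/2}$ growth after dividing by $(1+t)^m$, which is useless.

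So we bootstrap. First, a preliminary decay $\norm{u^{i+1}(t)}_{L^2}^2\lesssim (1+t)^{-1}$ gives $\int_0^s\norm{u}_{L^2}\lesssim (1+s)^{1/2}$. Feeding this into the bracket gives $\lesssim(1+s)\cdot(1+s)^{m-7/2}$, hence decay $(1+t)^{-3/2}$. Then $\int_0^\infty \norm{u}_{L^2}<\infty$ fails only logarithmically. Iterating once more gives $\int_0^s\norm{u}_{L^2}\lesssim(1+s)^{1/4}$. Repeating until $\int_0^\infty\norm{u}_{L^2}\,ds$ is finite bounds the bracket by a constant. The second term then decays like $(1+t)^{-5/2}$. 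Each pass multiplies by a factor $(\norm{\theta_{st}^i}_{L^2}^2+1)$, and counting passes should produce the power $5$ in \eqref{ec3.1}. The factors $(1+\norm{f}_X^2)^2$ and $\norm{f}_X^3$ come from $\mu_{\gamma^*}$ and \eqref{EA2.1}.

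\textbf{Main obstacle.} The hardest step is starting the bootstrap: obtaining the first nontrivial decay from the $L^\infty_tL^2$ bound alone. I would try a Gronwall-type argument on $y(t)=\int_0^t\norm{u}_{L^2}$, or use a different radius $R(t)^2=m/((1+t)\log(e+t))$ in the first pass. A further difficulty is tracking the constants so that they stay exactly of the stated polynomial form.
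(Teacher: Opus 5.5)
Your strategy is exactly the paper's:
\begin{itemize}
\item integrate \eqref{estimativa2} from $0$ using $u^{i+1}(0)=0$;
\item bound the $\Phi$ term via $\norm{\Phi}_{L^3}^2\le C\norm{\Phi}_{L^2}\norm{\nabla\Phi}_{L^2}\lesssim(1+s)^{-2-\gamma^*}$;
\item start from the crude bound $\int_0^s\norm{u^{i+1}}_{L^2}\le s\sup_\tau\norm{u^{i+1}(\tau)}_{L^2}$ supplied by \eqref{EA2.1}, and iterate.
\end{itemize}
As written, however, the proof is not closed. You declare the first pass useless and postulate a ``preliminary decay'' $(1+t)^{-1}$ that you never prove. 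You then leave the start of the bootstrap as an open obstacle, with only tentative remedies (Gronwall on $y(t)$, a logarithmically corrected radius). The gap comes from a sign slip. Your own computation bounds the second term by $C\norm{\theta^i_{st}}_{L^2}^2K^2(1+t)^{m-\frac12}$, and dividing by $(1+t)^m$ gives $(1+t)^{-\frac12}$: decay, not growth. Together with the $\Phi$ contribution $(1+t)^{-1-\gamma^*}$, this gives $\norm{u^{i+1}(t)}_{L^2}^2\lesssim(\norm{\theta^i_{st}}_{L^2}^2+1)(1+t)^{-\frac12}$ directly. That is precisely the paper's first step, and none of your remedies is needed.

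From there the iteration is mechanical. Suppose $\norm{u^{i+1}(s)}_{L^2}^2\lesssim(1+s)^{-k/2}$ with $k<4$. Then $\int_0^s\norm{u^{i+1}}_{L^2}\lesssim(1+s)^{1-k/4}$ and the squared bracket is $\lesssim(1+s)^{2-k/2}$. The second term therefore integrates to $(1+t)^{m-\frac{k+1}{2}}$, so $\norm{u^{i+1}(t)}_{L^2}^2\lesssim(1+t)^{-\frac{k+1}{2}}$, with one more factor $(\norm{\theta^i_{st}}_{L^2}^2+1)$ in the constant. The bound from \eqref{EA2.1} is just the case $k=0$, so the rates run $\frac12,1,\frac32,2,\frac52$. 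This matches the paper's count: two explicit passes, then ``repeated three times''.

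Your remark that integrability of $\norm{u^{i+1}}_{L^2}$ ``fails only logarithmically'' at rate $(1+t)^{-3/2}$ is also off. At that rate $\int_0^s\norm{u^{i+1}}_{L^2}$ still grows like $(1+s)^{1/4}$. The logarithm appears only at rate $(1+t)^{-2}$, where the next pass gives $(1+t)^{-5/2}\log^2(2+t)$. Removing it costs one further pass, so strictly the exponent $5$ in \eqref{ec3.1} becomes $6$. That is harmless, since Lemma \ref{lema4.1} only needs some fixed power, and the paper's own count glosses over the same point.
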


\begin{proof}
    Let us recall that by \eqref{EA2.1}, we have  
\begin{align*}
    \norm{u^{i+1}(t)}_{L^2}^{2} &  \leq C \norm{f}_X^3 \corchetes{\mu_{\gamma^{*}} (1+\norm{f}_X^2)}^\frac12 \leq C \norm{f}_X^3 (1+\norm{f}_X^2),
\end{align*}
so

\begin{align*}
     \parentesis{ \int_{0}^{t} \norm{u^{i+1}(s)}_{L^{2}} ds +\mu_{\gamma^{*}}(1+\norm{f}_{X})}^2
     \leq C \norm{f}_X^{3} (1+\norm{f}_{X}^2) (1+t)^2.
 \end{align*}
 Using this last inequality in \eqref{estimativa2} we get

\begin{equation} \label{decay_u_eq_1}
\begin{split}
     \frac{d}{dt}  &\parentesis{\norm{u^{i+1}(t)}_{L^2}^{2} (1+t)^m }
      \leq C \norm{f}_{X}^2 \norm{\Phi (t)}_{L^3}^2 (1+t)^m\\  & \hspace{3.5 cm}+  C  \norm{\theta_{st}^{i}}_{L^{2}}^2 \parentesis{ \int_{0}^{t} \norm{u^{i+1}(s)}_{L^{2}} ds +\mu_{\gamma^{*}}(1+\norm{f}_{X})}^2 (1+t)^{m-\frac72}\\ 
      & \leq C \norm{f}_{X}^2 \norm{\Phi (t)}_{L^3}^2 (1+t)^m  +   \overline{C} \norm{\theta_{st}^{i}}_{L^{2}}^2\norm{f}_{X}^{3}(1+\norm{f}_{X}^2)(1+t)^{m-\frac32}\\
      & \leq C \norm{f}_{X}^2 \norm{\Phi (t)}_{L^3}^2 (1+t)^m  +   \overline{C} \parentesis{\norm{\theta_{st}^{i}}_{L^{2}}^2 +1 }\norm{f}_{X}^{3}(1+\norm{f}_{X}^2)  (1+t)^{m-\frac32}.
\end{split}     
\end{equation}
We now integrate \eqref{decay_u_eq_1} in time. First, using the decay of the $L^2$ norm of the heat equation and its gradient we get

\begin{equation*}
    \norm{\Phi (t)}_{L^2}  \norm{\nabla \Phi (t)}_{L^2} \leq (1+t)^{- \parentesis{\frac32 + \gamma^*+\frac12}} = (1+t)^{- \parentesis{2 + \gamma^*}}
\end{equation*}
Then, we can use this estimate in combination with Lebesgue Interpolation  and Sobolev Embedding Theorem in the first term on the right hand side of \eqref{decay_u_eq_1} to get

\begin{align*}
      \int_{0}^{t}C \norm{f}_{X}^2 \norm{\Phi (s)}_{L^3}^2 (1+s)^m ds & =  C \norm{f}_{X}^2 \int_{0}^{t}  \norm{\Phi (s)}_{L^3}^2 (1+s)^m ds\\     
      & \leq  C \norm{f}_{X}^2 \int_{0}^{t}  \norm{\Phi (s)}_{L^2}\norm{ \nabla \Phi (s)}_{L^2} (1+s)^m ds \\
      & \leq  C \norm{f}_{X}^2 \int_{0}^{t} (1+s)^{ m- \parentesis{2 + \gamma^*}} ds \leq  \ \overline{C}\norm{f}_{X}^2 (1+t)^{m-\parentesis{1+\gamma^*}}.
\end{align*}
Now, integrating directly the second term in the right hand side of \eqref{decay_u_eq_1} we get
\begin{align*}
   \int_0^t  \overline{C} \parentesis{\norm{\theta_{st}^{i}}_{L^{2}}^2 +1 }&\norm{f}_{X}^{3}(1+\norm{f}_{X}^2)  (1+s)^{m-\frac32} \hspace{0,1 cm} ds\\   
   &\leq  \overline{C} \parentesis{\norm{\theta_{st}^{i}}_{L^{2}}^2 +1 }\norm{f}_{X}^{3}(1+\norm{f}_{X}^2)(1+t)^{m-\frac12}.
\end{align*}
Then,

\begin{align*}
    \norm{u^{i+1}(t)}_{L^2}^2 &\leq  \overline{C} \norm{f}_{X}^2 (1+t)^{-\parentesis{1+\gamma^*}}
      +  \overline{C}  \parentesis{\norm{\theta_{st}^{i}}_{L^{2}}^2 +1 }\norm{f}_{X}^{3}(1+\norm{f}_{X}^2)(1+t)^{-\frac12}\\
    &\leq   \overline{C} \parentesis{\norm{\theta_{st}^{i}}_{L^{2}}^2 +1 } \parentesis{1 +\norm{f}_X^2}\norm{f}_{X}^3(1+t)^{- \min\llaves{1+\gamma^*,\frac12}}\\
    & =  \overline{C} \parentesis{\norm{\theta_{st}^{i}}_{L^{2}}^2 +1 } \parentesis{1 +\norm{f}_X^2}\norm{f}_{X}^3(1+t)^{- \frac12}.   
\end{align*}
From this

\begin{align*}
     \int_{0}^{t} \norm{u^{i+1}(s)}_{L^{2}} ds 
     \leq \corchetes{ \overline{C} \parentesis{\norm{\theta_{st}^{i}}_{L^{2}}^2 +1 } \parentesis{1 +\norm{f}_X^2}\norm{f}_{X}^3}^\frac12 (1+t)^{\frac34},
 \end{align*}
 then

 \begin{align*}
     &\frac{d}{dt}  \parentesis{\norm{u^{i+1}(t)}_{L^2}^{2} (1+t)^m }\\  
      &\leq C \norm{f}_{X}^2 \norm{\Phi (t)}_{L^3}^2 (1+t)^m  +   \overline{C} \norm{\theta_{st}^{i}}_{L^{2}}^2 \parentesis{ \int_{0}^{t} \norm{u^{i+1}(s)}_{L^{2}} ds +\mu_{\gamma^{*}}(1+\norm{f}_{X})}^2 (1+t)^{m-\frac72}\\     
      & \leq C \norm{f}_{X}^2 \norm{\Phi (t)}_{L^3}^2 (1+t)^m  +  \overline{C} \parentesis{\norm{\theta_{st}^{i}}_{L^{2}}^2+1}^2\parentesis{1+\norm{f}_{X}^2}^2\norm{f}_{X}^{3} (1+t)^{m-2}.\\      
\end{align*}
As $\gamma^{*} > 2$
    
\begin{align*}
  \norm{u^{i+1}(t)}_{L^2}^{2} &\leq  \overline{C}\norm{f}_{X}^2 (1+\theta_{st})^{-\parentesis{1+\gamma^*}} +\overline{C}\parentesis{\norm{\theta_{st}^{i}}_{L^{2}}^2+1}^2\parentesis{1+\norm{f}_{X}^2}^2\norm{f}_{X}^{3} (1+t)^{-1}\\ 
   &\leq \overline{C}\parentesis{\norm{\theta_{st}^{i}}_{L^{2}}^2+1}^2\parentesis{1+\norm{f}_{X}^2}^2\norm{f}_{X}^{3}(1+t)^{- \min{\llaves{1+\gamma^*,1}}}.
\end{align*}
This same process, repeated three times, produces the optimal decay rate
\begin{equation*}
     \norm{u^{i+1}(t)}_{L^2}^{2} \leq  \overline{C} \parentesis{\norm{\theta_{st}^{i}}_{L^{2}}^2+1}^5\parentesis{1+\norm{f}_{X}^2}^2\norm{f}_{X}^{3} (1+t)^{-\frac52}.
\end{equation*}
\end{proof}

\subsection{The relation between the solutions to the auxiliary equations  \eqref{auxiliar1} and \eqref{auxiliar2}}

Having proved the decay properties of the $L^2$ norm of the solution $u^{i+1}$ to \eqref{auxiliar1}, we are now in position to study the relation between this solution and the solution $\theta_{st}^{i+1}$ to \eqref{auxiliar2}. The main goal is prove that
\begin{equation*}
    \theta_{st}^{i+1}(x)= \int_0^{\infty} u^{i+1}(x,t) + \Phi (x,t) \hspace{0,1 cm} dt,
\end{equation*}
where $\Phi (t)= e^{\Delta t} f$. The path to accomplishing this end consists of the following. First, we define $z^{i+1}(x,t) = u^{i+1}(x,t) + \Phi (x,t)$ and prove that its integral in time is the limit of a Cauchy sequence in $L^{2}(\mathbb{R}^3)$. Next, we prove that this integral is, in fact, a weak solution to \eqref{auxiliar2}, thus, by the uniqueness arising from Theorem \ref{th1}, we arrive at the relation we are looking for.

\begin{lemma} \label{lema_3.6}
    Let $u^{i+1}$ be the solution to \eqref{auxiliar1} given by Theorem \ref{th2} and $\Phi (t)= e^{\Delta t}f$. Then, $z^{i+1}(t)= u^{i+1}(t) + \Phi (t)$ is such that 

    \begin{equation*}
        \int_{0}^{\infty} z^{i+1}(t) \hspace{0,1 cm} dt \in L^2(\RR^3)
    \end{equation*}
\end{lemma}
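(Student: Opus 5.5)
We show that the partial time integrals $Z_T = \int_0^T z^{i+1}(t)\,dt$ form a Cauchy sequence in $L^2(\RR^3)$ as $T \to \infty$. Their limit is then, by definition, $\int_0^\infty z^{i+1}(t)\,dt \in L^2(\RR^3)$. The argument splits $z^{i+1}=u^{i+1}+\Phi$ and treats the two pieces separately. Since $L^2(\RR^3)$ is complete, it suffices to show that $\int_0^\infty \norm{u^{i+1}(t)}_{L^2}\,dt<\infty$ and $\int_0^\infty \norm{\Phi(t)}_{L^2}\,dt<\infty$. Indeed, for $T_2>T_1$, Minkowski's integral inequality gives
\begin{equation*}
\norm{Z_{T_2}-Z_{T_1}}_{L^2} \leq \int_{T_1}^{T_2} \norm{u^{i+1}(t)}_{L^2}\,dt + \int_{T_1}^{T_2}\norm{\Phi(t)}_{L^2}\,dt ,
\end{equation*}
and both tails tend to zero once the integrals converge absolutely. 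This reduces the problem to absolute (Bochner) integrability of each piece on $(0,\infty)$.

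For the heat part, Theorem \ref{theorem-decay-heat} with $n=3$ gives $\norm{\Phi(t)}_{L^2}\leq C\norm{f}_{L^2}(1+t)^{-\frac12(\frac32+\gamma^*)}$. The hypothesis $\gamma^*>2$ makes the exponent $\frac34+\frac{\gamma^*}{2}>\frac74>1$, so this piece is integrable. The paper already records this bound in the proof of Lemma \ref{lema1}, as a consequence of Lemma \ref{lema0.1}.

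For $u^{i+1}$, the decay estimate \eqref{ec3.1} gives
\begin{equation*}
\norm{u^{i+1}(t)}_{L^2}\leq \overline{C}^{\frac12}\parentesis{\norm{\theta_{st}^{i}}_{L^{2}}^2+1}^{\frac52}\parentesis{1+\norm{f}_{X}^2}\norm{f}_{X}^{\frac32}(1+t)^{-\frac54}.
\end{equation*}
Since $\frac54>1$, this is integrable on $(0,\infty)$, with the bound depending only on $\norm{\theta_{st}^i}_{L^2}$ and $\norm{f}_X$. Near $t=0$ there is no issue, because $u^{i+1}\in L^\infty(\RR^+,L^2)$ by \eqref{EA2.1}, and $\Phi$ is bounded by $\norm{f}_{L^2}$.

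The main obstacle is the $u^{i+1}$ term: a plain $L^\infty_tL^2$ bound does not give time integrability, so the argument relies on the full Fourier splitting bootstrap rate $(1+t)^{-5/2}$ for $\norm{u^{i+1}}_{L^2}^2$. Any rate for $\norm{u^{i+1}}_{L^2}^2$ faster than $(1+t)^{-2}$ would suffice, so the three iterations behind \eqref{ec3.1} are needed only to get past that threshold. Finally, I would note that the resulting $L^2$ norm of the limit is bounded by the sum of these two integrals. This quantitative bound, of the form $\mathcal{C}(\norm{\theta^i_{st}}_{L^2},\norm{f}_X)$, is what will later keep $\norm{\theta_{st}^{i+1}}_{L^2}\leq M$ uniformly in $i$.
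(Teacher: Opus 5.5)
Your proposal is correct and follows essentially the same route as the paper: Minkowski's inequality reduces the Cauchy property of $\int_0^n z^{i+1}\,dt$ in $L^2(\RR^3)$ to time-integrability of $\norm{u^{i+1}(t)}_{L^2}$, which comes from the $(1+t)^{-5/2}$ rate in \eqref{ec3.1}, and of $\norm{\Phi(t)}_{L^2}$, which comes from heat decay with $\gamma^*>2$. Your tail formulation is the cleaner version of the paper's step: the paper defines $a_n=\int_0^n\norm{z^{i+1}(t)}_{L^2}\,dt$ and asserts $a_n\to 0$, whereas what it actually needs (and what its bounds compute) is that the tails $\int_n^\infty\norm{z^{i+1}(t)}_{L^2}\,dt$ vanish.
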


\begin{proof}
    Fix $i$ and define

\begin{equation*}
    Z^{i+1}_n= \int_{0}^{n} z^{i+1}(t) \hspace{0,1 cm} dt.
\end{equation*}
Let us prove that $Z^{i+1}_n \in L^{2}(\RR^3)$. We have that

\begin{equation*}
\begin{split}
    \norm{Z^{i+1}_n}_{L^2}  \leq \int_{0}^{n} \norm{z^{i+1}(t)}_{L^2} \hspace{0,1 cm} dt &\leq \int_{0}^{n} \norm{u^{i+1}(t)}_{L^2} \hspace{0,1 cm} dt + \int_{0}^{n} \norm{\Phi (t)}_{L^2} \hspace{0,1 cm} dt\\
    & \leq \int_{0}^{n} \norm{u^{i+1}(t)}_{L^2} \hspace{0,1 cm} dt + \mu_{\gamma^{*}}(1+\norm{f}_{L^2}^2)\\
    & \leq \overline{C}^\frac12 \parentesis{\norm{\theta_{st}^{i}}_{L^{2}}^2+1}^\frac52\parentesis{1+\norm{f}_{X}^2}\norm{f}_{X}^{\frac32} + \mu_{\gamma^{*}}(1+\norm{f}_{L^2}^2),
\end{split}    
\end{equation*}
where in the second line we used Lemma \ref{lema0.1} and in the third line we used \eqref{ec3.1}. So $Z^{i+1}_n \in L^2(\RR^3)$ for all $n \in \N$ and the sequence $\llaves{Z^{i+1}_n}_{n \in \N}$ is uniformly bounded.

Now, we will prove that the sequence is Cauchy. For all $n \in \N$ define

\begin{equation*}
    a_n = \int_0^n \norm{z^{i+1}(t)}_{L^2} \hspace{0,1 cm} dt.
\end{equation*}
As before we have that

\begin{align*}
    a_n &\leq  \int_{0}^{n} \norm{u^{i+1}(t)}_{L^2} \hspace{0,1 cm} dt + \int_{0}^{n} \norm{\Phi (t)}_{L^2} \hspace{0,1 cm} dt\\
    & \leq \overline{C}^\frac12 \parentesis{\norm{\theta_{st}^{i}}_{L^{2}}^2+1}^\frac52\parentesis{1+\norm{f}_{X}^2}\norm{f}_{X}^{\frac32} (1+n)^{-\frac14} + \frac{1}{\frac14 -\frac{\gamma^{*}}{2}} (1+n)^{-\frac14 -\frac{\gamma^{*}}{2}}\\
    &\leq \left( \overline{C}^\frac12 \parentesis{\norm{\theta_{st}^{i}}_{L^{2}}^2+1}^\frac52\parentesis{1+\norm{f}_{X}^2}\norm{f}_{X}^{\frac32} \right) (1+n)^{-\frac14}.
\end{align*}
Therefore $a_n \to 0$ as $n \to \infty$, so is a Cauchy sequence. Notice that, for all $\varepsilon >0$

\begin{equation*}
    \norm{Z^{i+1}_m - Z^{i+1}_n}_{L^2} \leq \int_0^m \norm{z^{i+1}(t)}_{L^2} \hspace{0,1 cm} dt - \int_0^n \norm{z^{i+1}(t)}_{L^2} \hspace{0,1 cm} dt \leq \abs{a_m - a_n} < \varepsilon, 
\end{equation*}
if $m > n > n_0$. Then $\llaves{Z^{i+1}_n}_{n \in \N}$ is a convergent sequence in $L^2(\RR^3)$.
\end{proof}

\begin{lemma}
     Let $u^{i+1}$ be the solution to \eqref{auxiliar1} given by Theorem \ref{th2}. Then the function $z^{i+1}(t)= u^{i+1}(t) + \Phi (t)$ satisfies

     \begin{equation*}
         \theta_{st}^{i+1}(x)= \int_0^{\infty} z^{i+1}(x,t) \hspace{0,1 cm} dt.
     \end{equation*}    
\end{lemma}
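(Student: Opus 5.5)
The plan is to show that $Z^{i+1} := \int_0^\infty z^{i+1}(t)\,dt$, which lies in $L^2(\RR^3)$ by Lemma \ref{lema_3.6}, is a weak solution of \eqref{auxiliar2} belonging to $\dot H^1(\RR^3)$. The uniqueness part of Theorem \ref{th1} then forces $Z^{i+1}=\theta_{st}^{i+1}$.

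The starting point is the equation satisfied by $z^{i+1}$. Since $\Phi_t=\Delta\Phi$, adding this to \eqref{auxiliar1} gives
\begin{equation*}
z^{i+1}_t + V^i\cdot\nabla z^{i+1} = \Delta z^{i+1}, \qquad z^{i+1}(0)=f .
\end{equation*}
In weak form, for a fixed test function $\varphi\in C^\infty_c(\RR^3)$,
\begin{equation*}
\frac{d}{dt}\escalar{z^{i+1}(t),\varphi} = \escalar{z^{i+1}(t), V^i\cdot\nabla\varphi} - \escalar{\nabla z^{i+1}(t),\nabla\varphi},
\end{equation*}
where we used $\nabla\cdot V^i=0$ to move the derivative in the transport term onto $\varphi$. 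We integrate this identity over $(0,n)$ and obtain
\begin{equation*}
\escalar{z^{i+1}(n),\varphi}-\escalar{f,\varphi} = \escalar{Z_n^{i+1}, V^i\cdot\nabla\varphi} - \escalar{\nabla Z_n^{i+1},\nabla\varphi}.
\end{equation*}
Then we let $n\to\infty$ term by term:
\begin{itemize}
\item The boundary term $\escalar{z^{i+1}(n),\varphi}$ tends to $0$, because $\norm{u^{i+1}(n)}_{L^2}\to0$ by \eqref{ec3.1} and $\norm{\Phi(n)}_{L^2}\to0$ by Theorem \ref{theorem-decay-heat}.
\item The transport term converges, since $Z_n^{i+1}\to Z^{i+1}$ in $L^2$ (Lemma \ref{lema_3.6}) and $V^i\cdot\nabla\varphi\in L^2$, as $V^i\in L^2$ by \eqref{remark2}.
\item For the diffusion term we write $\escalar{\nabla Z_n^{i+1},\nabla\varphi}=-\escalar{Z_n^{i+1},\Delta\varphi}$, which also converges by the $L^2$ convergence of $Z_n^{i+1}$.
\end{itemize}
In the limit this gives
\begin{equation*}
\escalar{V^i\cdot\nabla Z^{i+1},\varphi}+\escalar{\nabla Z^{i+1},\nabla\varphi}=\escalar{f,\varphi}
\end{equation*}
in the distributional sense, for every $\varphi\in C^\infty_c(\RR^3)$.

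The main obstacle is showing that $Z^{i+1}\in\dot H^1(\RR^3)$. This is needed both for the weak formulation to make sense for all $\varphi\in\dot H^1$ and for the uniqueness argument of Theorem \ref{th1}, which tests the difference of two solutions against itself.

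\begin{itemize}
\item For the $\Phi$ part, $\int_0^\infty\Phi\,dt=(-\Delta)^{-1}f$, whose $\dot H^1$ norm equals $\norm{f}_{\dot H^{-1}}$, which is finite.
\item For the $u^{i+1}$ part, we would first try Minkowski's inequality, $\int_0^\infty\norm{\nabla u^{i+1}}_{L^2}\,dt<\infty$. To get this, we combine a time-weighted energy estimate from \eqref{ec1} with the decay \eqref{ec3.1}, aiming at $\int_t^{2t}\norm{\nabla u^{i+1}}^2\,ds\lesssim(1+t)^{-5/2}$. A dyadic decomposition together with Cauchy--Schwarz then gives a summable bound of order $t^{-3/4}$ on each block.
\item As an alternative, we can bound $\norm{\nabla Z_n^{i+1}}_{L^2}$ uniformly in $n$ by testing the integrated identity above with $Z_n^{i+1}$ itself. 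The transport term vanishes by incompressibility, and the boundary term $\escalar{z^{i+1}(n),Z_n^{i+1}}$ is controlled by the $L^2$ bounds. Weak lower semicontinuity then places $Z^{i+1}$ in $\dot H^1$.
\end{itemize}

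Once $Z^{i+1}\in\dot H^1(\RR^3)$, density of $C^\infty_c(\RR^3)$ in $\dot H^1(\RR^3)$ extends the identity to all $\varphi\in\dot H^1$. Hence $Z^{i+1}$ is a weak solution of \eqref{auxiliar2}, and uniqueness in Theorem \ref{th1} yields $\theta_{st}^{i+1}=Z^{i+1}$.
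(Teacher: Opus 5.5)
Your proposal is correct and follows the paper's proof. The shared route is to integrate the weak form of the equation for $z^{i+1}=u^{i+1}+\Phi$ over $(0,n)$, then let $n\to\infty$ using the $L^2$ convergence of $Z^{i+1}_n$ from Lemma \ref{lema_3.6} and the decay of $u^{i+1}$ and $\Phi$, extend by density, and conclude by the uniqueness in Theorem \ref{th1}. The paper relies on $Z^{i+1}\in\dot H^1(\RR^3)$ for both the density extension and the uniqueness step but never proves it, and your explicit check supplies that missing step: you use $\int_0^\infty\Phi\,dt=(-\Delta)^{-1}f$ together with either the dyadic bound on $\int_0^\infty\norm{\nabla u^{i+1}}_{L^2}\,dt$ or the uniform bound on $\norm{\nabla Z^{i+1}_n}_{L^2}$.
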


\begin{proof}
    Let

    \begin{equation*}
        Z= \int_0^{\infty} z^{i+1}(x,t) \hspace{0,1 cm} dt.
    \end{equation*}
In order to prove that $Z$ is a weak solution to \eqref{auxiliar2}, let $\llaves{Z^{i+1}_n}_{n\in \N}$ be as in the previous Lemma. As $u^{i+1}$ is a weak solution to \eqref{auxiliar1} and  $\Phi (t)$ is a classical solution to the Heat Equation, we have that

  \begin{equation*}
     \escalar{ z_t^{i+1}, \varphi} + \escalar{ V^i \cdot \nabla  z^{i+1}, \varphi}= - \escalar{ \nabla  z^{i+1}, \nabla \varphi}, 
\end{equation*} 
for all $\varphi \in \dot{H}^{1}(\RR^3)$ and almost every $t \in [0, \tau]$ for $\tau >0$. Integrating over $(0,n)$ in time we get

\begin{equation} \label{3.2.1}
    \escalar{ u^{i+1}(n) +\Phi (n), \varphi}   + \escalar{V^i  \cdot \nabla Z^{i+1}_n, \varphi}_{L^2}=  -  \escalar{ \nabla  Z_n^{i+1}, \nabla \varphi}_{L^2} +\escalar{f, \varphi}. 
\end{equation}
As $\Phi (t) \to 0$ as $t \to \infty$ and  by \eqref{ec3.1}, the  first term on the left hand side in \eqref{3.2.1} goes to $0$ as $n \to \infty$.

Notice that for  $\varphi \in C^{\infty}_c (\RR^3)$, we have that $\Delta \varphi \in C^{\infty}_c (\RR^3) \subset L^2(\RR^3)$ and  $\nabla \varphi \in L^3(\RR^3)$. Then,  by Hölder inequality and  the Sobolev Embedding Theorem,  we get

\begin{align*}
    \escalar{V^i  \cdot \nabla (Z^{i+1}_n -Z)} 
     \leq C \norm{ \nabla V^i}_{L^2} \norm{Z^{i+1}_n -Z}_{L^2} \norm{ \nabla \varphi}_{L^3} \to 0,
\end{align*}
due to  strong convergence of $\llaves{Z^{i+1}_n}_{n\in \N}$ as in Lemma \ref{lema_3.6} and $\nabla V^{i} \in L^2(\RR^3)$.

Analogously, 

\begin{equation*}
    -  \escalar{ \nabla (Z^{i+1}_n -Z), \nabla \varphi}_{L^2} \to 0 \hspace{0,5 cm} \text{as} \hspace{0,5 cm} n \to \infty.
\end{equation*}
So far we have that

\begin{equation*}
    \escalar{V^i  \cdot \nabla Z, \varphi}_{L^2}=  -  \escalar{ \nabla  Z, \nabla \varphi}_{L^2} +\escalar{f, \varphi} 
\end{equation*}
for all $\varphi \in C^{\infty}_c (\RR^3)$. Using the fact that $\overline{ C^{\infty}_c (\RR^3)} = \dot{H}^{1}(\RR^3)$ we get 

\begin{equation*}
    \escalar{V^i  \cdot \nabla Z, \varphi}_{L^2}=  -  \escalar{ \nabla  Z, \nabla \varphi}_{L^2} +\escalar{f, \varphi} 
\end{equation*}
for all $\varphi \in \dot{H}^1(\RR^3)$. Then $Z$ is a weak solution to \eqref{auxiliar2}, and by  uniqueness from Theorem \ref{th2}, we obtain the desired result.  \end{proof}

 Finally, we prove that $\theta_{st}^{i+1}$ is bounded by a term that depends on the $L^2$ norm of $\theta_{st}^i$. This will be helpful in the next Section.
\begin{lemma}
     Let $\theta_{st}^{i+1}$ be the solution to \eqref{auxiliar2} given by Theorem \ref{th1}. Then the function $\theta_{st}^{i+1}$ satisfies

     \begin{equation} \label{3.3.0}
     \norm{\theta_{st}^{i+1}}_{L^2}^2 \leq \overline{C}\parentesis{\norm{\theta_{st}^{i}}_{L^{2}}^2+1}^5\parentesis{1+\norm{f}_{X}^2}^2\norm{f}_{X}^3 +  \mathcal{M}_1^2
 \end{equation} 
 where $ \mathcal{M}_1= \mu_{\gamma^{*}}(1+\norm{f}_{L^2}^2).$
\end{lemma}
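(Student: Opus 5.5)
We use the representation just established, $\theta_{st}^{i+1} = \int_0^\infty \bigl(u^{i+1}(t) + \Phi(t)\bigr)\,dt$, where the integral converges in $L^2(\RR^3)$ as in Lemma \ref{lema_3.6}. The idea is to pass the $L^2$ norm inside the time integral by Minkowski's integral inequality and then treat the two pieces separately. This gives
\begin{equation*}
\norm{\theta_{st}^{i+1}}_{L^2} \leq \int_0^\infty \norm{u^{i+1}(t)}_{L^2}\,dt + \int_0^\infty \norm{\Phi(t)}_{L^2}\,dt .
\end{equation*}
More precisely, we apply Minkowski to each truncation $Z_n^{i+1}$ and pass to the limit $n\to\infty$, using the strong $L^2$ convergence of $Z_n^{i+1}$ to $\theta_{st}^{i+1}$.

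\textbf{The $u^{i+1}$ term.} We insert the decay bound \eqref{ec3.1}, which gives
\begin{equation*}
\norm{u^{i+1}(t)}_{L^2} \leq \overline{C}^{1/2}\parentesis{\norm{\theta_{st}^{i}}_{L^{2}}^2+1}^{5/2}\parentesis{1+\norm{f}_{X}^2}\norm{f}_{X}^{3/2}(1+t)^{-5/4}.
\end{equation*}
Since $5/4>1$, the factor $(1+t)^{-5/4}$ is integrable on $(0,\infty)$. The resulting integral is therefore a constant depending only on $m$ and $\gamma^*$, which we absorb into $\overline{C}$.

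\textbf{The $\Phi$ term.} We use Lemma \ref{lema0.1} together with Theorem \ref{theorem-decay-heat}. Since $\gamma^*>2$, we have $\norm{\Phi(t)}_{L^2}\leq C\norm{f}_{L^2}(1+t)^{-(3/2+\gamma^*)/2}$, and this is integrable. As in the proof of Lemma \ref{lema1}, this yields $\int_0^\infty\norm{\Phi}_{L^2}\,dt\leq \mathcal{M}_1$, where the constant $\mu_{\gamma^*}$ is enlarged if necessary.

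\textbf{Conclusion.} Squaring and using $(a+b)^2\leq 2a^2+2b^2$ gives
\begin{equation*}
\norm{\theta_{st}^{i+1}}_{L^2}^2\leq 2\overline{C}\parentesis{\norm{\theta_{st}^{i}}_{L^{2}}^2+1}^5\parentesis{1+\norm{f}_{X}^2}^2\norm{f}_{X}^3+2\mathcal{M}_1^2 .
\end{equation*}
The factor $2$ in front of $\overline{C}$ is absorbed into $\overline{C}$. The factor $2$ in front of $\mathcal{M}_1^2$ is absorbed by redefining $\mu_{\gamma^*}$, which is legitimate since $\mu_{\gamma^*}$ is only a generic constant depending on $\gamma^*$ and $n$.

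The main point needing care is this bookkeeping of constants. In particular, Lemma \ref{lema0.1} bounds the time integral of $\norm{\Phi}_{L^2}^2$, not of $\norm{\Phi}_{L^2}$. The $L^1$-in-time bound must instead come from the explicit decay rate in Theorem \ref{theorem-decay-heat}, which requires $\gamma^*>-1/2$ and so holds here since $\gamma^*>2$. This is also what justifies the Minkowski step, since it shows $\norm{\Phi(t)}_{L^2}$ belongs to $L^1(0,\infty)$.
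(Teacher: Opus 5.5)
Your proposal follows the paper's route: the representation $\theta_{st}^{i+1}=\int_0^\infty (u^{i+1}+\Phi)\,dt$, Minkowski's inequality, integration of \eqref{ec3.1} against the integrable weight $(1+t)^{-5/4}$, an $L^1$-in-time bound on $\Phi$, and squaring. Your bookkeeping is more careful than the paper's. The paper cites \eqref{lema2.1.1}, which bounds $\int_0^\infty\norm{\Phi}_{L^2}^2\,dt$, to control $\int_0^\infty\norm{\Phi}_{L^2}\,dt$, and it stops at an unsquared inequality with $\mathcal{M}_1^2$ on the right.

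The step that fails is your repair of that point: the bound $\norm{\Phi(t)}_{L^2}\le C\norm{f}_{L^2}(1+t)^{-(3/2+\gamma^*)/2}$ with $C$ depending only on $\gamma^*$. Theorem \ref{theorem-decay-heat} only provides some constant $C_2$, and that constant must depend on the low-frequency profile of $f$. Take $f_\lambda(x)=\lambda^{5/2}f(\lambda x)$. Then $\gamma^*(f_\lambda)=\gamma^*(f)$, $\norm{f_\lambda}_{\dot{H}^{-1}}=\norm{f}_{\dot{H}^{-1}}$ and $\norm{f_\lambda}_{L^2}=\lambda\norm{f}_{L^2}$. Yet $\int_0^\infty\norm{e^{t\Delta}f_\lambda}_{L^2}\,dt=\lambda^{-1}\int_0^\infty\norm{e^{t\Delta}f}_{L^2}\,dt\to\infty$ as $\lambda\to0$; this integral is a $\dot{B}^{-2}_{2,1}$-type norm of $f$, which $\norm{f}_X$ does not control. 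So no $\mu_{\gamma^*}$ depending only on $\gamma^*$ and $n$ gives $\int_0^\infty\norm{\Phi}_{L^2}\,dt\le\mathcal{M}_1$.

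Worse, take $\theta_{st}^i=0$. Then $V^i=0$, $u^{i+1}=0$, and $\theta_{st}^{i+1}=(-\Delta)^{-1}f$, so $\norm{\theta_{st}^{i+1}}_{L^2}=\norm{f}_{\dot{H}^{-2}}$. Along the same family this grows like $\lambda^{-1}$, while the right-hand side of \eqref{3.3.0} stays bounded. Hence the lemma itself is false if $\overline{C}$ and $\mu_{\gamma^*}$ may depend only on $m,\gamma^*,n$.

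What your argument actually proves is \eqref{3.3.0} with $\mathcal{M}_1^2$ replaced by $2\parentesis{\int_0^\infty\norm{\Phi(t)}_{L^2}\,dt}^2$, which is finite but depends on $f$. The defect is inherited from the paper (Lemma \ref{lema0.1} and the corresponding step in Lemma \ref{lema1}), so your proof is no weaker than the paper's. You should, however, drop the claim that $\mu_{\gamma^*}$ depends only on $\gamma^*$ and $n$.

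A smaller slip: integrability of $(1+t)^{-(3/2+\gamma^*)/2}$ requires $\gamma^*>1/2$, not $\gamma^*>-1/2$. The latter is the threshold for the $L^2$-in-time integral in Lemma \ref{lema0.1}. This is harmless here since $\gamma^*>2$.
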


\begin{proof} [Proof.]
    Let $z^{i+1}(t) = u^{i+1}(t) + \Phi (t)$ where $\Phi (t) = e^{\Delta t}f$ and $u^{i+1}$ is the weak solution to \eqref{auxiliar1} given by Theorem \ref{th2}. We have that
 \begin{equation*}
     \norm{\theta_{st}^{i+1}}_{L^2} \leq \int_{0}^{\infty} \norm{u^{i+1}(t)}_{L^2}  \hspace{0,1 cm} dt + \int_{0}^{\infty} \norm{\Phi (t)}_{L^2}  \hspace{0,1 cm} dt,
 \end{equation*}
 and from \eqref{ec3.1} and  \eqref{lema2.1.1} it follows that

 \begin{equation*}
     \norm{\theta_{st}^{i+1}}_{L^2} \leq \overline{C}(m, \gamma^*)^{\frac12} \parentesis{\norm{\theta_{st}^{i}}_{L^{2}}^2+1}^\frac52\parentesis{1+\norm{f}_{X}^2}\norm{f}_{X}^{\frac32} +  \mathcal{M}_1^2.
 \end{equation*}
 \end{proof}

\subsection{Weak Solution to the stationary equation}

At this point, we have developed all the tools needed to prove the existence of weak solutions to equation \eqref{Estacionaria}. To this end, we first prove a Lemma establishing that, under certain conditions on $\norm{f}_X$, the $L^2$ norm of $\theta_{st}^i$ is bounded by a constant $M$. Following this, we prove the main Theorem of this section regarding the existence  of weak solutions, and the uniqueness under specific conditions.

\begin{lemma} \label{lema4.1}
     Let $\theta_{st}^{i+1}$ be the solution to \eqref{auxiliar2} given by Theorem \ref{th1}. Also, let $M$ a positive constant such that $M >\mu_{\gamma^{*}}$. Additionally suppose that $f \in L^2(\RR^3)$ and $\gamma^{*}(f)>2$. Then there is a constant $\mathcal{K}_M$ such that if $\norm{\theta_{st}^{i}}_{L^2} \leq M$  and 

\begin{equation*}
   \norm{f}_X \leq \max \llaves{\mathcal{K}_M, \parentesis{\frac{M- \mu_{\gamma^{*}}}{\mu_{\gamma^{*}}}}},
\end{equation*}
then

\begin{equation*}
    \norm{\theta_{st}^{i+1}}_{L^2} \leq M.
\end{equation*}
\end{lemma}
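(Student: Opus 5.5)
The plan is to start from the bound \eqref{3.3.0} in the preceding Lemma, or more precisely from its unsquared form obtained in that proof,
\begin{equation*}
\norm{\theta_{st}^{i+1}}_{L^2} \leq \overline{C}^{\frac12}\parentesis{\norm{\theta_{st}^{i}}_{L^{2}}^2+1}^{\frac52}\parentesis{1+\norm{f}_{X}^2}\norm{f}_{X}^{\frac32} + \int_0^\infty \norm{\Phi(t)}_{L^2}\,dt .
\end{equation*}
We then split the right-hand side into a nonlinear piece and a linear (heat) piece. The aim is to make each piece at most a fixed fraction of $M$, using the hypothesis $\norm{\theta_{st}^{i}}_{L^2}\leq M$ together with the smallness of $\norm{f}_X$. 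Note that the statement as written reads ``$\max$''. For the argument it has to be understood as $\norm{f}_X \leq \min\{\mathcal{K}_M, (M-\mu_{\gamma^*})/\mu_{\gamma^*}\}$, i.e.\ both smallness conditions hold, and this is how I would treat it.

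The first step is the heat term. By Lemma \ref{lema0.1} and $\gamma^*>2$, as in the proof of Lemma \ref{lema1}, we have $\int_0^\infty\norm{\Phi}_{L^2}\,dt \leq \mu_{\gamma^*}(1+\norm{f}_X)$. The condition $\norm{f}_X \leq (M-\mu_{\gamma^*})/\mu_{\gamma^*}$ is exactly what gives $\mu_{\gamma^*}(1+\norm{f}_X)\leq M$. This explains the hypothesis $M>\mu_{\gamma^*}$. To leave room for the nonlinear term, I would prove a slightly sharper version: either replace $M$ by something like $(M+\mu_{\gamma^*})/2$ in that condition, or note that the heat contribution is in fact bounded by $C\norm{f}_{L^2}$. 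The latter holds because $\int_0^\infty\norm{e^{t\Delta}f}_{L^2}\,dt$ is homogeneous of degree one in $f$ and is finite by the decay-character estimate of Theorem \ref{theorem-decay-heat}. So the constant $1$ in $\mu_{\gamma^*}(1+\cdot)$ is harmless slack, and the heat piece can be made $\leq M/2$ by shrinking $\norm{f}_X$.

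The second step is the nonlinear term. With $\norm{\theta_{st}^{i}}_{L^2}\leq M$, it is bounded by
\begin{equation*}
\overline{C}^{\frac12}(M^2+1)^{\frac52}(1+\norm{f}_X^2)\norm{f}_X^{\frac32}.
\end{equation*}
This is a continuous increasing function of $\norm{f}_X$ that vanishes at $0$. Hence there is $\mathcal{K}_M>0$, depending on $M$, $\gamma^*$ and $\overline{C}$, such that for $\norm{f}_X\leq\mathcal{K}_M$ (we may also take $\mathcal{K}_M\leq 1$) it is at most $M-\mu_{\gamma^*}(1+\norm{f}_X)$, or at most $M/2$ in the variant above. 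Adding the two pieces then gives $\norm{\theta_{st}^{i+1}}_{L^2}\leq M$.

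The main obstacle is the bookkeeping. The two pieces share the budget $M$, so the thresholds have to be chosen jointly: I would first fix the margin $\delta=M-\mu_{\gamma^*}>0$, then choose $\norm{f}_X$ so small that the heat term is at most $\mu_{\gamma^*}+\delta/2$ and the nonlinear term is at most $\delta/2$. In the proof I would also correct the squaring of $\mathcal{M}_1$ in \eqref{3.3.0} to the unsquared form displayed above.
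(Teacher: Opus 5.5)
Your argument is correct and is essentially the paper's. You write $\theta_{st}^{i+1}=\int_0^\infty (u^{i+1}+\Phi)\,dt$, fit the heat part under $M$ using $M>\mu_{\gamma^{*}}$, and make the $u^{i+1}$ part small via $\norm{f}_X$. Your bookkeeping is in fact cleaner than the paper's: you work with the unsquared bound, read $\max$ as $\min$, and choose $\mathcal{K}_M$ so that $\overline{C}^{1/2}(M^2+1)^{5/2}(1+Y^2)Y^{3/2}+\mu_{\gamma^{*}}Y\leq M-\mu_{\gamma^{*}}$ for $Y=\norm{f}_X\leq\mathcal{K}_M$. The paper's $\mathcal{K}_M$ is instead the root of an equation containing $\mathcal{M}_1$, so it depends on $f$ itself and shrinks to $0$ as $\mathcal{M}_1\to M$.

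Drop the homogeneity aside, though. Degree-one homogeneity only gives a constant depending on the profile $f/\norm{f}_{L^2}$, not on $\gamma^{*}$ alone. The dilations $\lambda^{-3/2}f(x/\lambda)$ keep $\norm{f}_{L^2}$ and $\gamma^{*}$ fixed while multiplying $\int_0^\infty\norm{e^{t\Delta}f}_{L^2}\,dt$ by $\lambda^{2}$. The same example shows that the bound $\mu_{\gamma^{*}}(1+\norm{f}_X)$ imported from Lemma \ref{lema1} cannot hold with $\mu_{\gamma^{*}}$ depending on $\gamma^{*}$ only, so your proof, like the paper's, is only as good as that input.
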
 

\begin{proof} 

 By hypothesis we have that

 \begin{equation*}
     \parentesis{\norm{\theta_{st}^{i}}_{L^{2}}^2+1}^5 \leq \parentesis{M^2+1}^5. 
 \end{equation*}
Then, from \eqref{3.3.0} we obtain

 \begin{equation} \label{4.1.0}
     \norm{\theta_{st}^{i+1}}_{L^2}^2 \leq \overline{C}\parentesis{M^2+1}^5\parentesis{1+\norm{f}_{X}^2}^2\norm{f}_{X}^3 +  \mathcal{M}_1^2.
 \end{equation} 
 Let $Y= \norm{f}_X$ and define the polynomial in $Y$,

 \begin{equation*}
    \parentesis{1+Y^2}^2 Y^3  = \frac{M^2 - \mathcal{M}_1^2}{ \overline{C}\parentesis{M^2+1}^5}. 
 \end{equation*}
 If this polynomial has a positive root $\mathcal{K}_M$ we will have that, when $\norm{f}_X \leq \mathcal{K}_M$,
 
 \begin{equation*}
     \norm{\theta_{st}^{i+1}}_{L^2}^2 < M^2.
 \end{equation*}
 In order to prove this, notice that the polynomial has a positive root when the term in the right hand side is positive. As $\overline{C}\parentesis{M^2+1}^5 \geq 0$ we only need that $M^2 - \mathcal{M}_1^2 \geq 0$. In fact,  by hypothesis

 \begin{equation*}
     \norm{f}_X^2 < \frac{M-\mu_{\gamma^{*}}}{\mu_{\gamma^{*}}}
 \end{equation*}
 then 

 \begin{equation*}
      \mathcal{M}_1 = \mu_{\gamma^{*}}(1+\norm{f}_X^2) < M.
 \end{equation*}
So $M^2 - \mathcal{M}_1^2 \geq 0$, and we have proved that there exists a root $\mathcal{K}_M>0$. 
\end{proof}

We are now able to prove Theorem \ref{existenciaestacionaria}. 
\begin{proof} [Proof of Theorem \ref{existenciaestacionaria}] Let $M$ and $f$ as in Lemma \ref{lema4.1}. Our goal is to find a unique $\theta_{st} \in \dot{H}^{1} (\RR^3)$ such that 

   \begin{equation} \label{existencia_estacionaria_eq1}
      \escalar{V\cdot \grad \theta_{st}, \varphi}_{L^2} = \escalar{\nabla \theta_{st}, \nabla \varphi}_{L^2} + \escalar{f,\varphi}_{L^2}, \hspace{0,5  cm} \forall \varphi \in \dot{H}^{1}(\RR^3).
   \end{equation}  
{\bf Existence:}
Our first objective is to find a convergent sequence  $\llaves{\theta_{st}^{i}}_{i\in \N}$ in $\dot{H}^1(\RR^3)$ such that its limit satisfies \eqref{existencia_estacionaria_eq1}. Let us start by choosing $\theta_{st}^0 \in \dot{H}^1(\RR^3)$ such that 

\begin{equation*}
    \norm{\theta_{st}^0}_{L^2} \leq M, \hspace{1 cm}     \norm{\nabla \theta_{st}^0}_{L^2}^2 \leq \norm{f}_{\dot{H}^{-1}}^2.
\end{equation*}
By Theorem \ref{th1} there exists a weak solution $\theta_{st}^1$ to 

\begin{equation*}
   \begin{cases} 
   &V^{0}\cdot\nabla{\theta_{st}^{1}} = \Delta \theta_{st}^{1} + f \\
    &V^{0}= (\nabla P + \theta_{st}^{0} \ e_3)\\
    &\nabla \cdot V^{0} = 0,
    \end{cases}
\end{equation*}
such that $\theta_{st}^1 \in \dot{H}^1(\RR^3)$ and $\norm{\nabla \theta_{st}^1}_{L^2}^2\leq \norm{f}_{\dot{H}^{-1}}^2$, such that by Lemma \ref{lema4.1}  $\norm{\theta_{st}^1}_{L^2} \leq M$.
Repeating this process recursively we obtain a sequence $\llaves{\theta_{st}^{i}}_{i \in \N}$ of functions in $\dot{H}^1(\RR^3)$ such that $\theta_{st}^{i+1}$ is a weak solution to \eqref{auxiliar2} and

\begin{equation*}
    \norm{\theta_{st}^{i+1}}_{L^2} \leq M, \hspace{1 cm}     \norm{\nabla \theta_{st}^{i+1}}_{L^2}^2 \leq \norm{f}_{\dot{H}^{-1}}^2, \hspace{1 cm} \forall i \in \N.
\end{equation*}
Now, defining $Z^{i+1}= \theta_{st}^{i+1} - \theta_{st}^{i}$, we notice that it satisfies 

\begin{equation} \label{4.2.0}
    V^{i} \cdot\nabla Z^{i+1} + (V^{i}- V^{i-1})\cdot \nabla \theta_{st}^{i} = \Delta Z^{i+1}.
\end{equation}
Multiplying \eqref{4.2.0} by $Z^{i+1}$,  integrating by parts over $\RR^3$ and using the skew-symmetry property of the convective term  we have that

\begin{align*}
    \norm{\nabla Z^{i+1}}_{L^2}^2  
    \leq C\norm{ \nabla Z^{i}}_{L^2} \norm{\nabla Z^{i+1}}_{L^2} \norm{\theta_{st}^{i}}_{L^3}
    \leq  \frac 12 C\norm{ \nabla Z^{i}}_{L^2}^2 \norm{\theta_{st}^{i}}_{L^3}^2 + \frac12 \norm{\nabla Z^{i+1}}_{L^2}^2,
\end{align*}
where we used the 2-3-6 Hölder Inequality, Sobolev Embedding Theorem  and Young inequality for products. Now, by Lebesgue Interpolation, Lemma \ref{lema4.1} and $\norm{ \nabla \theta_{st}^{i}}_{L^2} < \norm{f}_X$, we have

\begin{equation} \label{existencia_estacionaria_eq2}
    \norm{\nabla Z^{i+1}}_{L^2}^2 \leq C M\norm{f}_X \norm{ \nabla Z^{i}}_{L^2}^2 \hspace{1 cm} \forall i \in \N.
\end{equation}
For $Z^{i}$, we also have that

\begin{equation} \label{existencia_estacionaria_eq3}
    \norm{\nabla Z^{i}}_{L^2}^2 \leq C M\norm{f}_X \norm{ \nabla Z^{i-1}}_{L^2}^2. 
\end{equation}
Plugging \eqref{existencia_estacionaria_eq3} into \eqref{existencia_estacionaria_eq2} and using a recursive argument we obtain

\begin{equation} \label{existencia_estacionaria_eq4}
    \norm{\nabla Z^{i+1}}_{L^2}^2 \leq (C M\norm{f}_X)^{i+1} \norm{ \nabla Z^{1}}_{L^2}^2 \leq  2\norm{f}_X  (C M\norm{f}_X)^{i+1}.
\end{equation}
Now, we choose the constant $\mathcal{C}_M$ such that

\begin{equation*}
   \mathcal{C}_M= \min \llaves{\frac{1}{C M}, \mathcal{K}_M}
\end{equation*}
where $\mathcal{K}_M$ is given by the Lemma \ref{lema4.1}. By hypothesis we have that

\begin{equation*}
    \norm{f}_X \leq \mathcal{C}_M \leq \frac{1}{C M},
\end{equation*}
then

\begin{equation*}
    C M\norm{f}_X < 1.
\end{equation*}
Taking  $i \to \infty$ in equation \eqref{existencia_estacionaria_eq4}, we obtain

\begin{equation*}
   \norm{\nabla( \theta_{st}^{i+1} - \theta_{st}^{i})}_{L^2}^2  =\norm{\nabla Z^{i+1}}_{L^2}^2  \to 0.
\end{equation*}
Then $\llaves{\theta_{st}^{i}}_{i \in \N}$ is a Cauchy sequence in $\dot{H}^{1}(\RR^3)$, so $\theta_{st}^{i} \to \theta_{st}$ strongly in $\dot{H}^{1}(\RR^3)$ and $\norm{\nabla \theta_{st}}_{L^2} \leq M$.

It remains to be shown that $\theta_{st}$ is in fact a weak solution to \eqref{Estacionaria}. By Theorem \ref{th1} we have that

\begin{equation} \label{4.2.2}
    \escalar{V^{i}\cdot \grad \theta_{st}^{i+1}, \varphi}_{L^2} = \escalar{\nabla \theta_{st}^{i+1}, \nabla \varphi}_{L^2} + \escalar{f,\varphi}_{L^2} \hspace{0,5  cm} \forall \varphi \in \dot{H}^{1}(\RR^3).
\end{equation}
We will  prove the convergence of the non-linear terms, the process for the other terms  is analogous. Notice that

\begin{align*}
   \abs{ \escalar{V\cdot \grad \theta_{st}, \varphi}_{L^2} -\escalar{V^{i}\cdot \grad \theta_{st}^{i+1}, \varphi}_{L^2}} &\leq \abs{\escalar{V\cdot \grad (\theta_{st}- \theta_{st}^{i+1}), \varphi}_{L^2}}
   + \abs{\escalar{(V^{i} - V)\cdot \grad \theta_{st}^{i+1}, \varphi}_{L^2}}\\
   &= \Psi_1 + \Psi_2.
\end{align*}
Recall that $V= \nabla P - \theta_{st} \ e_3$ and $ V^{i}=\nabla P - \theta_{st}^{i} \ e_3$. By the strong convergence of $\theta_{st}^{i}$ to $\theta_{st}$ in $\dot{H}^1(\RR^3)$, we have 

\begin{align*}
    \norm{V^{i}-V}_{\dot{H}^{1}} \leq \norm{ \nabla( \theta_{st}^{i}- \theta_{st})  }_{L^{2}} \to 0, 
\end{align*}
as $i \to \infty$. Thus, $V^{i} \to V$ strongly in $\dot{H}^1(\RR^3)$ and $V \in \dot{H}^1(\RR^3)$.

By Hölder Inequality, Sobolev Embedding Theorem and Lebesgue Interpolation we have that  $ \norm{\theta_{st}}_{L^2}, \norm{ \nabla \theta_{st}}_{L^2}, \norm{\nabla \varphi}_{L^2} < \infty$. Then

\begin{equation} \label{4.2.3}
\begin{split}
   \Psi_1 \leq C\norm{\grad (\theta_{st}- \theta_{st}^{i+1})}_{L^2} \norm{\theta_{st}}_{L^2} \norm{ \nabla \theta_{st}}_{L^2} \norm{\nabla \varphi}_{L^2} \to 0,
\end{split}   
\end{equation}
because of the strong convergence of $\theta_{st}^{i}$ to $\theta_{st}$.

Analogously for $\Psi_2$, we have 

\begin{align*}
    \Psi_2 \leq \norm{V^{i} - V}_{L^{6}} \norm{\grad \theta_{st}^{i+1}}_{L^{2}} \norm{\varphi}_{L^{3}} & \leq C \norm{\nabla(V^{i} - V)}_{L^{2}} \norm{\grad \theta_{st}^{i+1}}_{L^{2}} \norm{\varphi}_{L^{3}} \\
    & \leq C \norm{\nabla(V^{i} - V)}_{L^{2}} \norm{\grad \theta_{st}^{i+1}}_{L^{2}} \norm{\nabla \varphi}_{L^{2}}^{\frac 12} \norm{\varphi}_{L^{2}}^{\frac 12}.
\end{align*}
Since  $V^{i} \to V$ strongly in $\dot{H}^1(\RR^3)$  it follows $ \Psi_2 \to 0$ as $i \to \infty$.

We can conclude that

\begin{equation*}
    \abs{ \escalar{V\cdot \grad \theta_{st}, \varphi}_{L^2} -\escalar{V^{i}\cdot \grad \theta_{st}^{i+1}, \varphi}_{L^2}} \to 0 \hspace{0,5 cm} \text{as} \hspace{0,5 cm} i \to \infty.
\end{equation*}
We have obtained the following

\begin{align*}
     \escalar{V\cdot \grad \theta_{st}, \varphi}_{L^2} - \escalar{\nabla \theta_{st}, \nabla \varphi}_{L^2}= \lim_{i \to \infty} \escalar{V^{i}\cdot \grad \theta_{st}^{i+1}, \varphi}_{L^2} - \escalar{\nabla \theta_{st}^{i+1}, \nabla \varphi}_{L^2} =  \escalar{f,\varphi}_{L^2}, 
\end{align*}
$\forall \varphi \in \dot{H}^{1}(\RR^3)$. Thus, $\theta_{st}$ is a weak solution to \eqref{Estacionaria}.

{\bf Uniqueness:} Let $\theta_{st,1}$ and $\theta_{st,2}$ two weak solutions to \eqref{Estacionaria} such that

\begin{equation*}
    \norm{\nabla \theta_{st,i}}_{L^2} \leq \norm{f}_X \hspace{0,5 cm} \text{and} \hspace{0,5 cm}\norm{(\theta_{st})_i}_{L^2} <\infty \hspace{0,5 cm} \text{for} \hspace{0,2 cm} i=1,2.
\end{equation*}
Define $Z= \theta_{st,1} - \theta_{st,2}$ and notice that it solves the following equation

\begin{align*}
    V_2\cdot\nabla Z  + (V_1 - V_2)\cdot \nabla \theta_{st,1} =\Delta Z,
\end{align*}
where $V_i = \grad P - \theta_{st,i} \ e_3$. Multiplying this equation by $Z$,  integrating by parts, and using the skew-symmetry property of the convective term, we obtain

\begin{align*}
    \norm{\nabla Z}_{L^2}^2  \leq  \frac C2 \norm{ \nabla Z}_{L^2}^2 \norm{\theta_{st,1}}_{L^3}^2 + \frac12 \norm{\nabla Z}_{L^2}^2, 
\end{align*}
where we used the Hölder Inequality, Sobolev Embedding and Young Inequality for products. By Lebesgue Interpolation for $\norm{(\theta_{st})_1}_{L^3}$, Lemma \ref{lema4.1} and 
 $\norm{\nabla \theta_{st,i}}_{L^2} \leq \norm{f}_X$ we obtain

\begin{equation*}
   \norm{\nabla Z}_{L^2}^2 \leq C\norm{ \nabla Z}_{L^2}^2 \norm{(\theta_{st})_1}_{L^3}^2 \leq  C M\norm{f}_X \norm{ \nabla Z}_{L^2}^2.
\end{equation*}
Given that $\norm{f}_X < \dfrac{1}{C M}$,  there exists  $\delta >0$ such that

\begin{equation*}
    \norm{\nabla Z}_{L^2}^2 = \norm{ \nabla Z}_{L^2}^2 + \delta,
\end{equation*}
which is a contradiction. Thus, the solution must be unique.
\end{proof}

\section{ Asymptotic Behavior of Solutions to \eqref{Parabolica}}

In this last section, we are interested in the asymptotic behavior of the solutions to \eqref{Parabolica} that are close to the stationary $\theta_{st}$. To this end, we consider equation \eqref{Parabolica} with initial data $\theta_0 = \theta_{st} + \omega_0$, that is, a perturbation to $\theta_{st}$ from \eqref{Estacionaria}. We want to establish  whether such initial datum leads to a solution $\theta$ which  tends to $\theta_{st}$ when time goes to infinity. This can be formally stated as follows: let $\omega= \theta- \theta_{st}$, where $\theta$ and $\theta_{st}$ are solutions to \eqref{Parabolica} and \eqref{Estacionaria} respectively. Then $\omega$ solves

\begin{equation}  \label{5.0.2}
            \begin{cases} 
         &\partial_{t}\omega(t)+ V_\theta\cdot\nabla \omega + V_\omega \cdot\nabla \theta_{st}  = \Delta \omega,\\
         &  V_\omega= V_\theta - V_{\theta_{st}},\\
         &\nabla \cdot V_\omega = 0,\\
         &\omega(x,0)=  \omega_0.
            \end{cases}
\end{equation}
If we prove that the solution to \eqref{5.0.2} goes to zero as $t \to \infty$ we will have shown that $\theta$ goes to $\theta_{st}$.

This section is organized as follows. We first recall, in Theorem \ref{existenciaDPM}, the existence result for solutions to \eqref{Parabolica} by Castro, Córdoba, Gancedo and Orive \cite{Castro_2009}. Regarding the existence of a weak solution to \eqref{5.0.2}, we will follow the retarded mollifiers method developed by Caffarelli, Kohn, and Nirenberg \cite{caffarelli1982partial}. We introduce a mollifier in time, $\psi_\delta$,  to regularize  the terms $V_{\theta}\cdot \nabla \omega$ and $V_{\omega}\cdot \nabla \theta_{st}$. This ensures that $\psi_{\delta}[\omega]$ and  $\psi_{\delta}[V_{\omega}]$ will depend on $\omega$ and $V_{\omega}$, respectively, only on the  past time interval $t- 2\delta <t-s < t- \delta$. Therefore, we will obtain a linear equation over each strip $I_{k}=[k\delta, (k+1)\delta]\times \RR^3$, where $\delta = \frac{\tau}{m}$ and $k=0,\dots, m-1$. By solving these equations iteratively and gluing the results, we obtain a solution $\omega_m$ to the regularized equation over $[0,\tau]\times \RR^3$. Finally, these solutions form a sequence $\llaves{\omega_{m}}_{m \in \N}$ that, via  energy inequalities, converge to a weak solution to \eqref{5.0.2}. To this end, in Lemma \ref{lm: existencia} we prove existence of solutions to these linear equations and we then, in Theorem \ref{teorema_soluciones_debiles_decay} prove convergence of $\llaves{\omega_m}_{m \in \N}$ to a solution. With respect to the decay of $\omega$, in Definition \ref{def_asymptotically} we explicitly define the concept of asymptotically stable solution. Then we demonstrate the decay of the $L^2$ norm of $\omega$ using the Fourier splitting method.
  
\subsection{Existence of weak solutions} 

In \cite{Castro_2009}, Castro, Córdoba, Gancedo, and Orive established the existence of a weak solution to \eqref{Parabolica}. More precisely, their result is stated as follows.

\begin{theorem} \label{existenciaDPM}
  Let $\theta_0 \in L^2 (\RRn)$. Then, for any $\tau >0$, there exist at least one weak solution $\theta \in C([0, \tau]; L^2(\RRn)) \cap L^2([0, \tau];H^{1}(\RRn))$ to \eqref{Parabolica}. 
\end{theorem}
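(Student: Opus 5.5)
We would prove Theorem \ref{existenciaDPM} by a Galerkin (or mollification) scheme combined with the energy inequality and an Aubin--Lions compactness argument. The velocity will be treated as a zeroth-order linear operator of $\theta$. Indeed, taking the divergence of $V_\theta=-(\nabla P_\theta+\theta e_3)$ and using $\nabla\cdot V_\theta=0$ gives $\Delta P_\theta=-\partial_3\theta$. Hence $V_\theta=\mathcal{R}[\theta]$, where $\mathcal{R}$ is a matrix of products of Riesz transforms (explicitly $V_\theta = -e_3\theta + \nabla(-\Delta)^{-1}(-\partial_3\theta)$). By \eqref{remark2}, $\mathcal{R}$ is bounded on $L^p$ for $1<p<\infty$, and it commutes with Fourier multipliers.

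\textbf{Step 1 (regularization).} Fix $\tau>0$ and consider the truncated problem
\begin{equation*}
\partial_t\theta_N+\mathcal{P}_N\big(V_{\theta_N}\cdot\nabla\theta_N\big)=\Delta\theta_N+\mathcal{P}_N f,\qquad \theta_N(0)=\mathcal{P}_N\theta_0,
\end{equation*}
where $\mathcal{P}_N$ is the Fourier cutoff to $\{|\xi|\le N\}$. This is an ODE on the Hilbert space $\mathcal{P}_N L^2$ with locally Lipschitz right-hand side, because Bernstein's inequality controls all derivatives on this space. We therefore get a local solution.

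\textbf{Step 2 (energy estimate).} Testing with $\theta_N$ and using that $\mathcal{P}_N$ is self-adjoint and fixes $\theta_N$, the convective term becomes $\int V_{\theta_N}\cdot\nabla\theta_N\,\theta_N=0$ by $\nabla\cdot V_{\theta_N}=0$. For the forcing we take $f\in L^2$ or $\dot H^{-1}$ (as in the paper), and Young's inequality yields
\begin{equation*}
\sup_{[0,\tau]}\|\theta_N\|_{L^2}^2+\int_0^\tau\|\nabla\theta_N\|_{L^2}^2\le C(\tau)\big(\|\theta_0\|_{L^2}^2+\|f\|_X^2\big).
\end{equation*}
This uniform bound makes the solution global on $[0,\tau]$ and bounds $\theta_N$ in $L^\infty L^2\cap L^2H^1$.

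\textbf{Step 3 (time derivative).} Since $V_{\theta_N}\cdot\nabla\theta_N=\nabla\cdot(V_{\theta_N}\theta_N)$, we estimate $\|V_{\theta_N}\theta_N\|_{L^2}\le C\|\theta_N\|_{L^4}^2\le C\|\theta_N\|_{L^2}^{1/2}\|\nabla\theta_N\|_{L^2}^{3/2}$ in 3D. This puts $\partial_t\theta_N$ in $L^{4/3}(0,\tau;H^{-1})$ uniformly in $N$.

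\textbf{Step 4 (passage to the limit).} We pass to the limit using Aubin--Lions on balls $B_R$ together with a diagonal argument, as in the proof of Theorem \ref{th1}. This gives $\theta_N\to\theta$ strongly in $L^2(0,\tau;L^2_{loc})$, weakly in $L^2H^1$, and weak-* in $L^\infty L^2$. The nonlinear term is the main obstacle, because $V_\theta$ is nonlocal and local strong convergence of $\theta_N$ does not immediately give local convergence of $V_{\theta_N}$. We would write it as $\int\!\!\int V_{\theta_N}\theta_N\cdot\nabla\varphi$ with $\varphi$ compactly supported. Since $\mathcal{R}$ is linear and bounded on $L^2$, $V_{\theta_N}\rightharpoonup V_\theta$ weakly in $L^2L^2$, while $\theta_N\nabla\varphi\to\theta\nabla\varphi$ strongly in $L^2L^2$ because $\varphi$ has compact support. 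Weak-times-strong convergence then gives convergence of the product, so no locality of $\mathcal{R}$ is needed.

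\textbf{Step 5 (continuity in time).} Finally, we recover $\theta\in C([0,\tau];L^2)$. Weak continuity follows from the bound on $\partial_t\theta$, and strong continuity follows from the energy inequality together with lower semicontinuity; the alternative is the Lions--Magenes lemma applied to $\theta\in L^2H^1$ with $\partial_t\theta\in L^{4/3}H^{-1}$, after an additional interpolation. If strong continuity proves delicate, we would settle for weak continuity plus right-continuity at $t=0$, which suffices for the later use in Section 4.
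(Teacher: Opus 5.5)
The paper does not prove this statement; it quotes it from Castro, C\'ordoba, Gancedo and Orive, so your argument has to stand on its own.

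\textbf{Steps 1--4 are sound.} The Fourier truncation gives a global solution of the regularized problem, and the convective term drops out of the energy identity. In three dimensions $\partial_t\theta_N$ is bounded in $L^{4/3}(0,\tau;H^{-1})$; for general $n$ you would instead bound it in $L^1(0,\tau;H^{-s})$ with $s>n/2+1$. Your weak-times-strong treatment of $\int\!\!\int V_{\theta_N}\theta_N\cdot\nabla\varphi$ correctly avoids any locality of $\mathcal{R}$. Strictly, the truncated equation tested against $\varphi$ produces $\nabla\mathcal{P}_N\varphi$, which is not compactly supported. This is harmless: the discrepancy is bounded by $\sup_t\|\theta_N(t)\|_{L^2}^2\int_0^\tau\|\nabla(\mathcal{P}_N\varphi-\varphi)\|_{L^\infty}\,dt\to0$.

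\textbf{The gap is Step 5}, which is exactly the assertion $\theta\in C([0,\tau];L^2)$ in the statement.
The energy inequality plus weak lower semicontinuity only gives $\limsup_{t\to0^+}\|\theta(t)\|_{L^2}\le\|\theta_0\|_{L^2}$. If you also prove a strong energy inequality, you get right-continuity at almost every time. Neither ever controls $\limsup_{s\to t^-}\|\theta(s)\|_{L^2}$, and that needs an energy \emph{equality}.
Lions--Magenes does not apply either, since $L^2(0,\tau;H^1)$ and $L^{4/3}(0,\tau;H^{-1})$ are not in duality. No interpolation of these bounds alone can fix this: they are precisely the bounds of a Leray--Hopf solution of the 3D Navier--Stokes equations, where the energy equality is a well-known open problem.
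Your fallback, weak continuity plus continuity at $t=0$, proves a strictly weaker statement than the theorem.

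\textbf{The missing idea is to use that the equation is scalar.} Since $\|V_\theta\|_{L^2}\le C\|\theta\|_{L^2}$, the limit satisfies $V_\theta\cdot\nabla\theta\in L^2(0,\tau;L^1)$. Hence $\partial_t\theta\in L^2(0,\tau;H^{-1})+L^2(0,\tau;L^1)$, and bounded truncations of $\theta$ are admissible test functions. Set $T_k(s)=\max\{-k,\min\{k,s\}\}$ and $G_k(s)=\int_0^sT_k(r)\,dr$. Then $G_k(\theta(t))\in H^1$, and since $V_\theta(t)$ is a divergence-free $L^2$ field,
\[
\int_{\RR^3}(V_\theta\cdot\nabla\theta)\,T_k(\theta)\,dx=\int_{\RR^3}V_\theta\cdot\nabla G_k(\theta)\,dx=0
\]
for a.e.\ $t$. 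The standard chain rule for such $\partial_t\theta$, tested against $T_k(\theta)$ as in the theory of renormalized solutions, then gives for every $t\in[0,\tau]$
\[
\int_{\RR^3}G_k(\theta(t))\,dx+\int_0^t\!\!\int_{\{|\theta|<k\}}|\nabla\theta|^2\,dx\,ds=\int_{\RR^3}G_k(\theta_0)\,dx+\int_0^t\langle f,T_k(\theta)\rangle\,ds.
\]
Letting $k\to\infty$ by monotone convergence yields the energy equality for the limit itself. So $t\mapsto\|\theta(t)\|_{L^2}$ is continuous, and together with the weak continuity you already have, this gives $\theta\in C([0,\tau];L^2)$.

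Without a step of this kind, your proposal only produces a weak solution in $C_w([0,\tau];L^2)\cap L^2(0,\tau;H^1)$.
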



Let us introduce the definition of weak solution to \eqref{5.0.2}.

\begin{definition}
We say that a function 

    \begin{equation*}
        \omega \in L^{\infty}(0, \tau; L^{2}(\RR^3)) \cap L^{2}(0, \tau; \dot{H}^{1}(\RR^3)), \hspace{0,2 cm} \partial_{t}\omega(t) \in L^2(0, \tau; \dot{H}^{-1}(\RR^3)),
    \end{equation*}
    is a weak solution to \eqref{5.0.2} if for any $\varphi \in C^{\infty}_{c}(\RR^{3} \times \RR)$

\begin{equation*}
    \begin{split}
      \int_0^{\tau} \int_{\RR^3} &\escalar{ \partial_t \omega(t), \varphi(t)}_{\dot{H}^{-1}, \dot{H}^{1}}  \ dt + \int_0^{\tau} \int_{\RR^3} (V_\theta \cdot \nabla  \omega(t))\varphi(t) \ dx \ dt\\
       &+  \int_0^{\tau}\int_{\RR^3}(V_{\omega} \cdot \nabla \theta_{st}) \varphi(t) \ dx \ dt        
        = - \int_{0}^{\tau} \escalar{ \nabla  \omega(t), \nabla \varphi(t)}_{L^2} \ dt.   
    \end{split}
\end{equation*}
\end{definition}

Following  Niche and Orive \cite{Niche-Orive} closely, we find an expression for $V_{\omega}$ in terms of $\omega$. Since $\grad \cdot V_{\omega}=0$, we have that

\begin{equation*}
- \Delta P_{\omega} = \partial_{x_3} \omega.
\end{equation*}
where $P_\omega = P_\theta - P_{\theta_{st}}$ and $P_\theta$, $P_{\theta_{st}}$ are the pressures associated to the parabolic and stationary problems respectively. Notice that this last equation correspond to a Poisson equation with external force $\partial_{x_3} \omega$ and whose solution in $\RR^3$ is  given by

\begin{equation} \label{eq 1: presion}
P_{\omega}(x) = - \frac{1}{3S_{2}}\int_{\RR^3}\frac{1}{\abs{y}}  \ \partial_{y_3} \omega(x-y) \ dy,
\end{equation}
where $S_{2}$  is the surface area of the unit ball in $\RR^3$. Integrating by parts over $\RR^3$ we get 

\begin{equation} \label{eq 2: presion}
\begin{split}
P_{\omega}(x) & = \frac{1}{3S_{2}}\text{P.V.} \int_{\RR^3}\frac{x_3 - y_3}{\abs{x-y}^3} \ \omega(y) \ dy.\\
\end{split}
\end{equation}
Applying gradient in \eqref{eq 2: presion} it follows

\begin{equation*}
\begin{split}
 \nabla P_{\omega}(x) = \frac{1}{3S_{2}} \text{P.V.} \int_{\RR^3} \mathcal{K}(x-y) \omega(y) \ dy,
\end{split} 
\end{equation*}
where

\begin{equation*}
 \mathcal{K}(x) = \parentesis{\frac{-3x_1x_3}{\abs{x}^5}, \frac{-3x_2x_3}{\abs{x}^5},\frac{\abs{x}^2 - 3x_3^2}{\abs{x}^5}}.
\end{equation*}
Thus,

\begin{equation} \label{eq 4: presion}
 V_{\omega} (x) = H_{\varepsilon}[\omega](x) = - \frac{1}{3S_{2}} \text{P.V.} \int_{\RR^3} \mathcal{K}(x-y) \omega(y) \ dy - \omega(x)e_3 .
\end{equation}
By Calderón-Zygmund Theorem (see Theorem 5.1, Chapter 5 in Duoandikoetxea \cite{javier})

\begin{equation*}
\norm{ \grad P_{\omega}}_{L^p} \leq C \norm{ \omega}_{L^p} \hspace{0,5 cm} 1 < p < \infty,
\end{equation*}
where $C$ is a constant that depends only on $p$. Therefore,

\begin{equation} \label{eq 5: presion}
\norm{V_{\omega}}_{L^p} \leq \norm{ \grad P_{\omega}}_{L^p} + \norm{\omega}_{L^{p}} \leq (C +1)\norm{\omega}_{L^{p}} \leq C \norm{\omega}_{L^{p}}, \hspace{0,5 cm} 1 < p < \infty. 
\end{equation}
This also implies that

\begin{equation} \label{eq 6: presion}
\norm{\nabla V_{\omega}}_{L^2} \leq C \norm{\nabla \omega}_{L^2}.
\end{equation}

Having established the initial properties of the $L^p$ norm of $V_{\omega}$, we now continue on the path to prove the existence of weak solutions to \eqref{5.0.2}. The following lemma establishes that there is a weak solution to the associated linear problem.

\begin{lemma} \label{lm: existencia}
Let $\tau >0$  and $v_1, v_2 \in L^{\infty}(0, \tau; L^2(\RR^3)) \cap L^2(0,\tau; \dot{H}^1(\RR^3))$ with $\nabla \cdot v_2 = 0$. Furthermore, let $\omega_0, \theta_0 \in L^2(\RR^3)$, $\theta$ a weak solution to \eqref{Parabolica} with initial datum $\theta_{0}$ given by Theorem \ref{existenciaDPM}, and $\theta_{st}$ as in Theorem \ref{existenciaestacionaria}. Then, there exists a unique 
\begin{equation*}
\omega \in L^{\infty}(0, \tau; L^{2}(\RR^3)) \cap L^{2}(0, \tau; \dot{H}^{1}(\RR^3)), \hspace{0,2 cm} \partial_{t}\omega \in L^2(0, \tau; \dot{H}^{-1}(\RR^3)),
\end{equation*}
which is a weak solution to 
 \begin{equation}   \label{eq_lineal}
            \begin{cases} 
         &\partial_{t}\omega(t)+ V_\theta\cdot\nabla v_1 + v_2 \cdot\nabla \theta_{st}  = \Delta \omega,\\        
         &\omega(x,0)=  \omega_0.
            \end{cases}
\end{equation} 
\end{lemma}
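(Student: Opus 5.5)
Since $v_1$, $v_2$ are given, \eqref{eq_lineal} is the heat equation with prescribed source
\begin{equation*}
F := -V_\theta\cdot\nabla v_1 - v_2\cdot\nabla\theta_{st},
\end{equation*}
so the plan is to show $F \in L^2(0,\tau;\dot H^{-1}(\RR^3))$ and then invoke the standard Galerkin/Lions theory for linear parabolic equations. This is the same framework already used for Theorem \ref{th2} (Salsa and Verzini \cite{salsa_libro}). Once $F$ is controlled in $L^2(0,\tau;\dot H^{-1})$, we pair with $\varphi\in\dot H^1$ and obtain the energy estimate
\begin{equation*}
\sup_{t\le\tau}\norm{\omega(t)}_{L^2}^2+\int_0^\tau\norm{\nabla\omega}_{L^2}^2\,dt\le \norm{\omega_0}_{L^2}^2+\int_0^\tau\norm{F}_{\dot H^{-1}}^2\,dt.
\end{equation*}
The equation itself then gives $\partial_t\omega=\Delta\omega+F\in L^2(0,\tau;\dot H^{-1})$. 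Existence follows by passing to the limit in the Galerkin approximations; since the problem is linear, weak convergence suffices. Uniqueness follows because the difference of two solutions solves the heat equation with zero data, and testing with the difference yields $\frac{d}{dt}\norm{\cdot}_{L^2}^2\le 0$. This test is legitimate by the Lions–Magenes lemma, given $\omega\in L^2\dot H^1$ and $\partial_t\omega\in L^2\dot H^{-1}$.

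The main step is estimating $F$ in $\dot H^{-1}$, uniformly enough to be square integrable in time.

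\textbf{The term $v_2\cdot\nabla\theta_{st}$.} Since $\nabla\cdot v_2=0$, we write it as $\nabla\cdot(v_2\theta_{st})$, so $\norm{v_2\cdot\nabla\theta_{st}}_{\dot H^{-1}}\le\norm{v_2\theta_{st}}_{L^2}$. By H\"older, Sobolev embedding and Theorem \ref{existenciaestacionaria},
\begin{equation*}
\norm{v_2\theta_{st}}_{L^2}\le \norm{v_2}_{L^6}\norm{\theta_{st}}_{L^3}\le C\norm{\nabla v_2}_{L^2}\norm{\theta_{st}}_{L^2}^{1/2}\norm{\nabla\theta_{st}}_{L^2}^{1/2}\le C(M\norm{f}_X)^{1/2}\norm{\nabla v_2}_{L^2},
\end{equation*}
which lies in $L^2(0,\tau)$.

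\textbf{The term $V_\theta\cdot\nabla v_1$.} Here $\nabla\cdot V_\theta=0$, so it equals $\nabla\cdot(V_\theta v_1)$. Using \eqref{eq 5: presion} and Sobolev embedding,
\begin{equation*}
\norm{V_\theta v_1}_{L^2}\le \norm{V_\theta}_{L^3}\norm{v_1}_{L^6}\le C\norm{\theta}_{L^2}^{1/2}\norm{\theta}_{H^1}^{1/2}\norm{\nabla v_1}_{L^2}.
\end{equation*}
This is where I expect the main obstacle: the time integrability. With $\theta\in L^\infty L^2\cap L^2H^1$ (Theorem \ref{existenciaDPM}), the factor $\norm{\theta}_{H^1}^{1/2}$ lies only in $L^4_t$, while $\norm{\nabla v_1}_{L^2}$ lies only in $L^2_t$. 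So the product lands in $L^{4/3}_t$, not $L^2_t$.

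To handle this, I would first try the alternative split
\begin{equation*}
\norm{V_\theta v_1}_{L^2}\le\norm{V_\theta}_{L^6}\norm{v_1}_{L^3}\le C\norm{\nabla\theta}_{L^2}\norm{v_1}_{L^2}^{1/2}\norm{\nabla v_1}_{L^2}^{1/2},
\end{equation*}
using \eqref{eq 6: presion}. If neither split closes in $L^2_t$, I would settle for $F\in L^{4/3}(0,\tau;\dot H^{-1})$. In that case I would run the Galerkin scheme with the energy identity tested against $\omega$, bounding $\langle F,\omega\rangle$ via $\norm{V_\theta}_{L^3}\norm{v_1}_{L^6}\norm{\nabla\omega}_{L^2}$ together with Young's inequality. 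This gives $\omega\in L^\infty L^2\cap L^2\dot H^1$ directly, and $\partial_t\omega$ is then controlled in the dual norm sufficient for the weak formulation. In the intended application (retarded mollification), $v_1$ and $v_2$ are time mollifications of the previous iterate and are smooth in time. This restores the $L^2_t$ integrability, since the mollified $v_1$ lies in $L^\infty(0,\tau;\dot H^1)$.
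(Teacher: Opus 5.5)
\textbf{There is a genuine gap, at the term $V_\theta\cdot\nabla v_1$.} You correctly identified the obstruction, but nothing in your proposal overcomes it.

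Both of your splits give the same integrability. The bound $\norm{V_\theta v_1}_{L^2}\le C\norm{\nabla\theta}_{L^2}\norm{v_1}_{L^2}^{1/2}\norm{\nabla v_1}_{L^2}^{1/2}$ is $L^2_t\cdot L^4_t\subset L^{4/3}_t$, and so is the $L^3$--$L^6$ split. This is a scaling obstruction: a product of two energy-class functions in 3D behaves like $L^{4/3}_tL^2_x$, not $L^2_tL^2_x$. No choice of H\"older and Sobolev exponents will fix it.

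Your fallback fails for the same reason. Young's inequality gives $\norm{V_\theta}_{L^3}\norm{v_1}_{L^6}\norm{\nabla\omega}_{L^2}\le\frac12\norm{\nabla\omega}_{L^2}^2+C\norm{V_\theta}_{L^3}^2\norm{v_1}_{L^6}^2$. The last term is only $L^2_t\cdot L^1_t$, which is exactly the $L^2_t$-integrability you could not establish, so the energy bound does not follow ``directly''.

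More fundamentally, the lemma asserts $\omega\in L^2(0,\tau;\dot H^1)$ and $\partial_t\omega\in L^2(0,\tau;\dot H^{-1})$. These force $F=\partial_t\omega-\Delta\omega\in L^2(0,\tau;\dot H^{-1})$. Settling for $F\in L^{4/3}(0,\tau;\dot H^{-1})$ therefore cannot yield the statement as posed, and there is no reason to expect it for arbitrary energy-class $v_1$.

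\textbf{The paper's proof has the same hole.} It follows your route: treat $h$ as a source in $L^2(0,\tau;H^{-1})$, apply Salsa--Verzini linear theory on balls, then exhaust. Your $v_2$-estimate is equivalent to the paper's. The paper gets past the $V_\theta$ term only by asserting $\norm{v_i(t)}_{L^2}\norm{\nabla v_i(t)}_{L^2}<C$ for a.e.\ $t$. That bound does not follow from $v_i\in L^\infty(0,\tau;L^2)\cap L^2(0,\tau;\dot H^1)$, since $\norm{\nabla v_i(t)}_{L^2}$ is only $L^2$ in time.

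\textbf{The repair is your last remark, but it must be made a hypothesis.} Assume, for instance, $v_1\in L^4(0,\tau;\dot H^1(\RR^3))$, which includes $L^\infty(0,\tau;\dot H^1)$. Then $\norm{V_\theta v_1}_{L^2}\le C\norm{\theta}_{L^2}^{1/2}\norm{\nabla\theta}_{L^2}^{1/2}\norm{\nabla v_1}_{L^2}$, which is $L^4_t\cdot L^4_t\subset L^2(0,\tau)$.

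This hypothesis holds wherever the lemma is used. By Minkowski and Cauchy--Schwarz in time, $\norm{\nabla\psi_\delta[v](t)}_{L^2}\le C\delta^{-1/2}\norm{\nabla v}_{L^2(0,\tau;L^2)}$. The $\delta$-dependence is harmless, because the lemma is applied only at fixed $m$ and the uniform bounds come later from the energy inequality for $\omega_m$.

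With that hypothesis added, the rest of your argument goes through: Lions theory, $\partial_t\omega=\Delta\omega+F$, and uniqueness via Lions--Magenes.
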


\begin{proof}

We will first prove the existence of a weak solution to \eqref{eq_lineal} on a bounded smooth domain $\Omega$ with Dirichlet boundary conditions. Then, as in Theorem \ref{th1}, we will use this to obtain a weak solution in the whole space  from a sequence of solutions defined on balls with increasing radii.

Notice that the terms $V_\theta\cdot\nabla v_1$ and $v_2 \cdot\nabla \theta_{st} $ do not depend on $\omega$, therefore, we can consider them as an external force $f$ in the equation and rewrite it as

\begin{equation} \label{eq_lineal_limitado}  
            \begin{cases} 
         &\partial_{t}\omega(t)  - \Delta \omega(t) = h \hspace{0,5 cm} \text{in}  \hspace{0,2 cm}  \Omega \times(0, \tau],\\   
         & \omega(x,t) = 0 \hspace{0,8 cm} \text{on}  \hspace{0,2 cm}  \partial\Omega,\\
         &\omega(x,0)=  \omega_0, \hspace{0,5 cm} \text{in}  \hspace{0,2 cm}  \Omega,
            \end{cases}
\end{equation}
where $h=- V_\theta\cdot\nabla v_1 - v_2 \cdot\nabla \theta_{st}$.

Let us prove that $h \in L^2(0, \tau; H^{-1}(\Omega))$. For $\varphi \in H^{1}_{0}(\Omega)$, we take

\begin{equation*}
    \escalar{h, \varphi}_{\dot{H}^{-1}, \dot{H}^{1}} = - \int_{\Omega} (V_\theta\cdot\nabla v_1 + v_2 \cdot\nabla \theta_{st}) \varphi \ dx.
\end{equation*}
Since $\nabla \cdot V_{\theta}=0$ we can use the skew-symmetry property of the convective term  to obtain

\begin{equation*}
     \escalar{h, \varphi}_{\dot{H}^{-1}, \dot{H}^{1}} = \int_{\Omega} (V_\theta\cdot\nabla \varphi) v_1 - (v_2 \cdot\nabla \theta_{st}) \varphi \ dx.
\end{equation*}
By Hölder's inequality, Sobolev Embedding, Lebesgue interpolation  and  \eqref{remark2} we have

\begin{equation*}
\begin{split}
       \norm{h(t)}_{H^{-1}(\Omega)} &\leq C \parentesis{\norm{V_{\theta}(t)}_{L^6(\Omega)}\norm{ v_1(t)}_{L^3(\Omega)}+ \norm{v_2(t)}_{L^3(\Omega)}\norm{\nabla \theta_{st}}_{L^2(\Omega)}}\\
       &\leq C \parentesis{\norm{V_{\theta}(t)}_{L^6(\RR^3)}\norm{ v_1(t)}_{L^3(\RR^3)}+ \norm{v_2(t)}_{L^3(\RR^3)}\norm{\nabla \theta_{st}}_{L^2(\RR^3)}}\\
        &\leq C\norm{\theta(t)}_{L^6(\RR^3)}\norm{v_1(t)}_{L^2(\RR^3)}^{\frac12}\norm{\nabla v_1(t)}_{L^2(\RR^3)}^{\frac12}\\        & \hspace{3 cm}+ C\norm{v_2(t)}_{L^2(\RR^3)}^{\frac12}\norm{\nabla v_2(t)}_{L^2(\RR^3)}^{\frac12}\norm{\nabla \theta_{st}}_{L^2(\RR^3)} \\  
        &\leq C \norm{\nabla \theta(t)}_{L^2(\RR^3)}\norm{v_1(t)}_{L^2(\RR^3)}^{\frac12}\norm{\nabla v_1(t)}_{L^2(\RR^3)}^{\frac12}\\        &\hspace{3 cm}+ C\norm{v_2(t)}_{L^2(\RR^3)}^{\frac12}\norm{\nabla v_2(t)}_{L^2(\RR^3)}^{\frac12}\norm{\nabla \theta_{st}}_{L^2(\RR^3)}.\\       
\end{split} 
\end{equation*}
Squaring on both sides, using Young inequality  and  integrating over $(0, \tau)$ we obtain

\begin{equation*} 
    \begin{split}
    \int_{0}^{\tau}  \norm{h(t)}_{H^{-1}(\Omega)}^2 \ dt &\leq C \int_{0}^{\tau}\norm{\nabla \theta(t)}_{L^2(\RR^3)}^{2}\norm{v_1(t)}_{L^2(\RR^3)}\norm{\nabla v_1(t)}_{L^2(\RR^3)} \ dt\\ 
    &\hspace{2,5 cm}+ C \int_{0}^{\tau}\norm{v_2(t)}_{L^2(\RR^3)}\norm{\nabla v_2(t)}_{L^2(\RR^3)}\norm{\nabla \theta_{st}}_{L^2(\RR^3)}^2\ dt.\\   
    \end{split}
\end{equation*}
Since $\theta \in L^2(0, \tau; \dot{H}^1(\RR^3))$ and $v_1, v_2 \in L^{\infty}(0, \tau; L^2(\RR^3)) \cap L^2(0,\tau; \dot{H}^1(\RR^3))$ it follows that

\begin{equation*}
    \norm{v_i(t)}_{L^2(\RR^3)}\norm{\nabla v_i(t)}_{L^2(\RR^3)} < C, \hspace{0,5 cm} \text{for a.e $t\in (0, \tau]$, with $i=1,2$}   
\end{equation*}
and

\begin{equation*}
    \int_{0}^{\tau} \norm{\nabla \theta(t)}_{L^2(\RR^3)}^{2} \ dt < \infty.
\end{equation*}
Since Theorem \ref{existenciaestacionaria} implies that $\|\nabla \theta_{st}\|_{L^2} < M$, it follows that

\begin{equation} \label{eq_lineal_limitada_2}
    \begin{split}
    \int_{0}^{\tau}  \norm{h(t)}_{H^{-1}(\Omega)}^2 \ dt \leq C \int_{0}^{\tau}\norm{\nabla \theta(t)}_{L^2(\RR^3)}^{2} \ dt 
   + C M^2 \tau.  
    \end{split}
\end{equation}
Therefore $h \in L^2(0,\tau; H^{-1}(\Omega))$.

By the arguments in Theorem 9.6 in Section 9.3 and in Section 9.4 in Salsa and Verzini \cite{salsa_libro}, there is a unique weak solution $\omega$ to \eqref{eq_lineal_limitado}, and the following energy estimates hold, for every $t \in [0, \tau]$

\begin{equation} \label{des_energia_lineal_1}
        \norm{\omega(t)}_{L^2(\Omega)}^{2} + \int_{0}^{t} \norm{\nabla \omega(s)}_{L^2(\Omega)}^2 ds\leq \norm{\omega_0}_{L^2(\Omega)}^{2} + \int_{0}^{t} \norm{h(s)}_{H^{-1}(\Omega)} ds,
\end{equation}
and
\begin{equation} \label{des_energia_lineal_2}
        \ \int_{0}^{t} \norm{\partial_s \omega(s)}_{H^{-1}(\Omega)}^2 ds\leq 2\norm{\omega_0}_{L^2(\Omega)}^{2} + 4\int_{0}^{t} \norm{h(s)}_{H^{-1}(\Omega)} ds.
\end{equation}
We proved existence of a solution in a smooth bounded domain $\Omega$. To extend the result to the whole space we proceed as in Theorem \ref{th1}, by taking a sequence  of balls with radii $R_n$ going to infinity and using a diagonal argument to show that a global solution exists. We omit the details. 

 Finally by the lower  semi-continuity of the norm, together with \eqref{des_energia_lineal_1} and \eqref{des_energia_lineal_2}  it follows that

 \begin{equation*}         \norm{\omega(t)}_{L^2(\RR^3)}^{2} + \int_{0}^{t} \norm{\nabla \omega(s)}_{L^2(\RR^3)}^2 ds\leq \norm{\omega_0}_{L^2(\RR^3)}^{2} + \int_{0}^{t} \norm{h(s)}_{H^{-1}(\Omega)} ds
\end{equation*}
and

\begin{equation*} 
        \ \int_{0}^{t} \norm{\partial_s \omega(s)}_{\dot{H}^{-1}(\RR^3)}^2 ds\leq 2\norm{\omega_0}_{L^2(\RR^3)}^{2} + 4\int_{0}^{t} \norm{h(s)}_{H^{-1}(\Omega)} ds.
\end{equation*}
\end{proof}

Having established the existence of a weak solution to the associated linear problem, we now introduce the mollifier $\psi_\delta$ that we will use to turn \eqref{5.0.2} into a suitable form to apply Lemma \ref{lm: existencia}. This is important because the regularity  of solutions obtained in Theorem \ref{teorema_soluciones_debiles_decay} will allow us to apply the Fourier Splitting Method rigorously to prove decay rates. We choose $\psi \in C^{\infty} (\RR)$ such that

\begin{equation*}
\psi \geq 0, \hspace{0,5 cm} \text{supp $ \psi$} \subset [1,2], \hspace{0,5 cm} \text{and} \hspace{0,5 cm} \int_{ \RR} \psi(y,s)ds=1.
\end{equation*}
For a given function $g=g(x,t)$ with $x \in \RR^3$ and $t \in [0, \tau]$, we define

\begin{equation*}
\tilde{g}(x,t) = 
\begin{cases}
g(x,t), \hspace{1 cm} &x \in \RR^3, t \in [0, \tau],\\
0, \hspace{1 cm} &x \in \RR^3, t \in (\tau, \infty).
\end{cases}
\end{equation*} 
Now, we set

\begin{equation} \label{eq: 1 Mollifier}
\psi_{\delta}[g](x,t)= \frac{1}{\delta}\int_{\RR} \psi\parentesis{\frac{s}{\delta}} \tilde{g}(x,t- s) \ ds, \hspace{0,5 cm} t>0.
\end{equation}
Using the change of variable $s'= \dfrac{s}{\delta}$ in \eqref{eq: 1 Mollifier} we obtain

\begin{equation*} 
\psi_{\delta}[g](x,t)= \int_{\RR} \psi\parentesis{s'} \tilde{g}(x,t- \delta s') \ dy \ ds'.
\end{equation*}
Since  $ \text{supp $ \psi$}\subset [1,2]$ , it follows that if $s' \in (0,1)\cup (2, \infty)$,  then $\psi_{\delta}(g)(x,t)= 0$. On the other hand, when $t- 2\delta < t-s' < t - \delta$, given that $\tilde{g}$  vanishes when $t-s < 0$, it follows that $\psi_{\delta}(g)(x,t)= 0$ for $\psi \neq 0$. Therefore, the values of $\psi_{\delta}[g](x,t)$ are determined by the past times $t-s$ of $\tilde{g}(x,s)$, that satisfy $t- 2\delta < t-s < t - \delta$.

The following Lemma, from Caffarelli, Kohn and Nirenberg \cite{caffarelli1982partial}, provides some properties of $\psi_{\delta}$.

\begin{lemma} \label{lm: existencia 2}

Let $v \in L^{\infty}(0, \tau; L^2(\RR^3))\cap  L^{2}(0, \tau; \dot{H}^1(\RR^3)) $ such that $\nabla \cdot v = 0$, then

\begin{equation*}
    \nabla \cdot \psi_{\delta}[v]= 0,
\end{equation*}
\begin{equation} \label{eq_mollifier_1}
   \sup_{0 < t < \tau} \norm{\psi_{\delta}[v](t)}_{L^2}  \leq C \sup_{0 < t < \tau} \norm{v(t)}_{L^2},
\end{equation}
\begin{equation} \label{eq_mollifier_2}
    \int_{0}^{\tau} \norm{ \nabla \psi_{\delta}[v](t)}_{L^2}^{2} \ dt \leq C  \int_{0}^{\tau} \norm{ \nabla v(t)}_{L^2}^{2} \ dt.
\end{equation}
Furthermore, $\psi_{\delta}[v] \in L^{\infty}(0, \tau; L^2(\RR^3))\cap  L^{2}(0, \tau; \dot{H}^1(\RR^3))$.

\end{lemma}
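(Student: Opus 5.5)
The mollifier is a convolution in time only, so every spatial operation commutes with it and each property can be reduced to a pointwise-in-time statement followed by an averaging argument. By the change of variables already recorded,
\begin{equation*}
\psi_\delta[v](x,t)=\int_{\RR}\psi(s')\,\tilde v(x,t-\delta s')\,ds',
\end{equation*}
so $\psi_\delta[v](\cdot,t)$ is a weighted average, with probability weight $\psi(s')\,ds'$ supported in $[1,2]$, of the functions $\tilde v(\cdot,t-\delta s')$. We extend $\tilde v$ by zero also for negative times, which is consistent with the causality remark made after \eqref{eq: 1 Mollifier}.

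For the divergence, we test against $\phi\in C^\infty_c(\RR^3)$ and apply Fubini:
\begin{equation*}
\int_{\RR^3}\psi_\delta[v](t)\cdot\nabla\phi\,dx=\int_{\RR}\psi(s')\int_{\RR^3}\tilde v(t-\delta s')\cdot\nabla\phi\,dx\,ds'.
\end{equation*}
The inner integral vanishes for a.e.\ $s'$. When $t-\delta s'\in[0,\tau]$ this is because $\nabla\cdot v=0$, and otherwise $\tilde v=0$. Hence $\nabla\cdot\psi_\delta[v]=0$ in the distributional sense. The same Fubini argument shows that $\nabla\psi_\delta[v]=\psi_\delta[\nabla v]$, so the spatial gradient commutes with the mollifier.

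For \eqref{eq_mollifier_1}, we apply Minkowski's integral inequality in $L^2(\RR^3)$:
\begin{equation*}
\norm{\psi_\delta[v](t)}_{L^2}\le\int_{\RR}\psi(s')\norm{\tilde v(t-\delta s')}_{L^2}\,ds'\le\sup_{0<s<\tau}\norm{v(s)}_{L^2}\int_{\RR}\psi=\sup_{0<s<\tau}\norm{v(s)}_{L^2},
\end{equation*}
so in fact $C=1$. For \eqref{eq_mollifier_2}, we use the commutation $\nabla\psi_\delta[v]=\psi_\delta[\nabla v]$, which reduces the claim to a Young inequality for convolutions in time. We set $F(t)=\norm{\widetilde{\nabla v}(t)}_{L^2}$ and again use Minkowski, giving $\norm{\nabla\psi_\delta[v](t)}_{L^2}\le(\psi_\delta*F)(t)$, where $\psi_\delta(s)=\delta^{-1}\psi(s/\delta)$ has $L^1(\RR)$ norm equal to $1$. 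Young's inequality $\norm{\psi_\delta*F}_{L^2(\RR)}\le\norm{\psi_\delta}_{L^1}\norm{F}_{L^2(\RR)}$ then yields
\begin{equation*}
\int_0^\tau\norm{\nabla\psi_\delta[v](t)}_{L^2}^2\,dt\le\int_0^\tau\norm{\nabla v(t)}_{L^2}^2\,dt,
\end{equation*}
with the constant independent of $\delta$. The final membership statement, $\psi_\delta[v]\in L^\infty(0,\tau;L^2(\RR^3))\cap L^2(0,\tau;\dot H^1(\RR^3))$, follows directly from the two bounds.

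The only delicate point is measure-theoretic: we must justify Fubini and Minkowski for a Bochner-measurable $v$ and make sense of $\nabla\psi_\delta[v]=\psi_\delta[\nabla v]$ in $\dot H^1$. I would handle this by first proving the estimates for $v$ smooth in $(x,t)$ and then passing to the limit by density in $L^2(0,\tau;\dot H^1(\RR^3))$. The limit passage works because both estimates are linear and uniform in $\delta$.
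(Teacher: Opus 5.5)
Your proof is correct. The paper gives no argument of its own for this lemma; it simply cites it from Caffarelli--Kohn--Nirenberg, where the retarded mollifier acts in space and time, not in time only, and these bounds are treated as standard properties of convolution. Your proof supplies exactly that standard argument:
\begin{itemize}
\item Fubini for the divergence.
\item Minkowski's integral inequality for the $L^\infty_tL^2_x$ bound.
\item The commutation $\nabla\psi_\delta[v]=\psi_\delta[\nabla v]$, followed by Young's inequality $\norm{\psi_\delta * F}_{L^2(\RR)}\le\norm{\psi_\delta}_{L^1}\norm{F}_{L^2(\RR)}$ in time, for the gradient bound.
\end{itemize}
It also makes explicit that $C=1$, uniformly in $\delta$. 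Your convention that $\tilde v$ vanishes for negative times is needed for the statement to be well posed, since the paper only specifies $\tilde g$ for $t>\tau$.

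The closing density step is unnecessary. For $v\in L^\infty(0,\tau;L^2(\RR^3))\cap L^2(0,\tau;\dot H^1(\RR^3))$, the Bochner-integral versions of Fubini and Minkowski apply directly. The commutation of $\nabla$ with the mollifier holds for every fixed $t$, because
\[
\int_{\RR}\psi(s')\norm{\nabla\tilde v(t-\delta s')}_{L^2}\,ds'\le\delta^{-1}\sup\psi\int_0^\tau\norm{\nabla v(s)}_{L^2}\,ds<\infty .
\]
Moreover, approximation by functions smooth in $(x,t)$ is not available in the norm of $L^\infty(0,\tau;L^2(\RR^3))$. So for the first estimate it is cleaner to use Minkowski directly, as you already do, than to argue by density.
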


Using \eqref{eq 4: presion} we now define

\begin{equation*}
\psi_{\delta}[V_{\omega_m}]= \psi_{\delta}[H_{\varepsilon}(\omega_m)],
\end{equation*}
which leads to the linear version of equation \eqref{5.0.2} in an interval $I_k=\RR^3\times [ k\delta, (k+1) \delta]$

\begin{equation} \label{eq 7: existencia}
\partial_{t} \omega_m + V_{\theta} \cdot \nabla \psi_{\delta}[\omega_m] + \psi_{\delta}[V_{\omega_{m}}] \cdot \nabla \theta_{st} = \Delta \omega_m, \hspace{0,5 cm} \omega_m(x,0)= \omega_0 (x). 
\end{equation}
By Lemma \ref{lm: existencia 2},  
\begin{displaymath}
    \psi_{\delta}[V_{\omega}], \psi_{\delta}[\omega] \in  L^{\infty}(0, \tau; L^2(\RR^3))\cap  L^{2}(0, \tau; \dot{H}^1(\RR^3))
    \end{displaymath}
provided that  $\omega \in L^{\infty}(0, \tau; L^2(\RR^3))\cap L^{2}(0, \tau; \dot{H}^1(\RR^3))$. Therefore, we can use  Lemma \ref{lm: existencia} to obtain a weak solution $\omega_{m,0}=\omega_{m}\big\rvert_{I_0}$ to
 
\begin{equation*} 
\begin{cases}
&\partial_{t} \omega_{m,0} + V_{\theta} \cdot \nabla \psi_{\delta}[\omega_{m,0}] + \psi_{\delta}[V_{\omega_{m,0}}] \cdot \nabla \theta_{st} = \Delta \omega_{m,0}, \hspace{0,5 cm} \text{in} \hspace{0,5 cm}  I_0 =\RR^3\times  [0,  \delta],\\
& \omega_{m,0}(x,0)= \omega_0 (x). 
\end{cases}
\end{equation*}
Applying  Lemma \ref{lm: existencia} again, but now with $\omega_{m,0}$ as initial data, we obtain a weak solution $\omega_{m,1}=\omega_{m}\big\rvert_{I_1}$ to
 
\begin{equation*} 
\begin{cases}
&\partial_{t} \omega_{m,1} + V_{\theta} \cdot \nabla \psi_{\delta}[\omega_{m,1}] + \psi_{\delta}[V_{\omega_{m,1}}] \cdot \nabla \theta_{st} = \Delta \omega_{m,1}, \hspace{0,5 cm} \text{in} \hspace{0,5 cm}  I_1=\RR^3\times [\delta, 2 \delta],\\
& \omega_{m,1}(x,0)= \omega_{m,0} (x, \delta). 
\end{cases}
\end{equation*}
Proceeding inductively, we obtain a unique weak solution to \eqref{eq 7: existencia} given by
\begin{equation} \label{eq 8: existencia}
\begin{split}
       &\omega_m \in L^{\infty}(0, \tau; L^{2}(\RR^3)) \cap L^{2}(0, \tau; \dot{H}^{1}(\RR^3)),\\
       &\omega_{m}(x,t) = \omega_{m,k}(x,t) \hspace{0,2 cm} \text{if} \hspace{0,2 cm} (x,t) \in \RR^3\times [ k\delta, (k+1) \delta],
\end{split}
\end{equation}
over $\RR^3 \times [0, \tau]$, with $\delta = \frac{\tau}{m}$,  $k = 0, \dots, m-1$, and for $\tau >0$.

In the next Lemma we establish the existence of a weak solution to \eqref{5.0.2}, based on the previously described construction.

\begin{theorem} \label{teorema_soluciones_debiles_decay}
Let $\theta_0, \omega_0 \in L^2(\RR^3)$, and let $f$ and $\theta_{st}$ be as in Theorem \ref{existenciaestacionaria}.  Then there is a weak solution to \eqref{5.0.2}.
\end{theorem}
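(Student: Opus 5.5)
The plan is to pass to the limit $m\to\infty$ in the retarded-mollifier approximations $\omega_m$ from \eqref{eq 8: existencia}. We first derive energy bounds for $\omega_m$ that are uniform in $m$, then extract convergent subsequences by compactness, and finally identify the limit as a weak solution of \eqref{5.0.2} on $[0,\tau]$ for every fixed $\tau>0$.

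\textbf{Step 1 (uniform energy bounds).} We test \eqref{eq 7: existencia} with $\omega_m$. For the transport term $V_\theta\cdot\nabla\psi_\delta[\omega_m]$ the skew-symmetry trick is not available, since the transported function $\psi_\delta[\omega_m]$ differs from $\omega_m$. We therefore move the derivative onto $\omega_m$ via $\nabla\cdot V_\theta=0$, giving $\abs{\int V_\theta\cdot\nabla\omega_m\,\psi_\delta[\omega_m]}$. Hölder (2-3-6), Sobolev, \eqref{remark2} and interpolation bound this by
\[
\norm{\theta}_{L^6}\norm{\nabla\omega_m}_{L^2}\norm{\psi_\delta[\omega_m]}_{L^3}\le C\norm{\nabla\theta}_{L^2}\norm{\nabla\omega_m}_{L^2}\norm{\psi_\delta[\omega_m]}_{L^2}^{1/2}\norm{\nabla\psi_\delta[\omega_m]}_{L^2}^{1/2}.
\]
The term $\psi_\delta[V_{\omega_m}]\cdot\nabla\theta_{st}$ is handled as in the uniqueness part of Theorem \ref{existenciaestacionaria}: integrate by parts onto $\omega_m$ and use $\norm{\theta_{st}}_{L^3}^2\le CM\norm{f}_X$ together with \eqref{eq 5: presion}. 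This gives a contribution of at most $CM\norm{f}_X(\norm{\nabla\omega_m}^2+\norm{\nabla\psi_\delta[\omega_m]}^2)$.

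We then integrate in time over $[0,t]$ and use Lemma \ref{lm: existencia 2}, applied on $[0,t]$, to control $\sup\norm{\psi_\delta[\omega_m]}_{L^2}$ and $\int\norm{\nabla\psi_\delta[\omega_m]}^2$ by the corresponding quantities for $\omega_m$. With Young's inequality and Gronwall's inequality, using $\nabla\theta\in L^2(0,\tau;L^2)$ from Theorem \ref{existenciaDPM}, we aim for
\[
\sup_{[0,\tau]}\norm{\omega_m}_{L^2}^2+\int_0^\tau\norm{\nabla\omega_m}_{L^2}^2\le C(\tau,\norm{\omega_0}_{L^2},\theta)
\]
uniformly in $m$. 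The smallness of $\norm{f}_X$ is what allows the $\theta_{st}$-term to be absorbed.

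\textbf{Main obstacle.} Step 1 is the hardest part. The $V_\theta$ term is cubic-type, and absorbing it requires the structure $\norm{\nabla\theta}_{L^2}^2\in L^1_t$ to enter as the Gronwall weight. After Young's inequality this weight comes out as roughly $\norm{\nabla\theta}^4$, which is not integrable. If that happens, I would instead use $\norm{\theta}_{L^3}\norm{\nabla\omega_m}\norm{\psi_\delta\omega_m}_{L^6}$, with $\theta\in L^4_tL^3$ by interpolation. If that also fails, I would exploit that $\psi_\delta[\omega_m]$ depends only on earlier strips, so the bound can be propagated strip by strip.

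\textbf{Step 2 (time derivative and compactness).} Duality as in \eqref{eq_lineal_limitada_2} gives a uniform bound on $\partial_t\omega_m$ in $L^{4/3}(0,\tau;\dot H^{-1})$, or in $L^2$ if Step 1 allows it. We then extract a subsequence with $\omega_m\rightharpoonup\omega$ weak-$*$ in $L^\infty L^2$ and weakly in $L^2\dot H^1$. By Aubin–Lions on balls $B_R$ together with a diagonal argument, as in Theorem \ref{th1}, we also get $\omega_m\to\omega$ strongly in $L^2(0,\tau;L^2_{loc})$.

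\textbf{Step 3 (passage to the limit).} We show $\psi_\delta[\omega_m]-\omega_m\to0$ in $L^2(0,\tau;L^2_{loc})$. This holds because the mollifier shifts time by at most $2\delta=2\tau/m$, and the $L^2_t$ modulus of continuity of $\omega_m$ is controlled uniformly through the bound on $\partial_t\omega_m$. The same argument applies to $\psi_\delta[V_{\omega_m}]$ via Calderón–Zygmund \eqref{eq 5: presion}.

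For a test function $\varphi\in C_c^\infty$, the terms $\int V_\theta\psi_\delta[\omega_m]\cdot\nabla\varphi$ and $\int\psi_\delta[V_{\omega_m}]\cdot\nabla\theta_{st}\,\varphi$ then converge, using weak convergence in $L^2\dot H^1$ against fixed $L^2$ functions plus the strong local convergence. The linear terms pass to the limit by weak convergence. This yields the weak formulation of \eqref{5.0.2}, and the regularity of $\omega$ follows from lower semicontinuity.
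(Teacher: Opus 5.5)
Your overall architecture (uniform energy bounds, a bound on $\partial_t\omega_m$, Aubin--Lions on balls, passage to the limit, lower semicontinuity) matches the paper's. However, Step 1 is a genuine gap, and neither of your fallbacks closes it.

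In $\int_{\RR^3}(V_\theta\cdot\nabla\omega_m)\,\psi_\delta[\omega_m]\,dx$ the advected function and the tested function differ, so there is no cancellation. With $\theta,\omega_m$ only in $L^\infty_tL^2_x\cap L^2_t\dot{H}^1_x$, every H\"older--Sobolev--interpolation bound gives a time integrand whose reciprocal time exponents add up to $\tfrac54$. For example, $\norm{\theta}_{L^3}\norm{\nabla\omega_m}_{L^2}\norm{\nabla\psi_\delta[\omega_m]}_{L^2}$ gives $\tfrac14+\tfrac12+\tfrac12$. So the integrand is only in $L^{4/5}_t$, and your non-integrable weight $\norm{\nabla\theta}_{L^2}^4$ is one instance of this.

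Your first fallback yields exactly $C\norm{\theta}_{L^3}\norm{\nabla\omega_m}_{L^2}\norm{\nabla\psi_\delta[\omega_m]}_{L^2}$. This can be absorbed only if $\sup_t\norm{\theta(t)}_{L^3}$ is small, and nothing makes it small: $\theta_0\in L^2$ is arbitrary, and only $f$ is assumed small.

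The strip-by-strip fallback uses $\norm{\nabla\psi_\delta[\omega_m](t)}_{L^2}^2\le C\delta^{-1}\int_{t-2\delta}^{t-\delta}\norm{\nabla\omega_m}_{L^2}^2\,ds$. Each strip therefore costs a factor of order $\delta^{-1}\int_{I_k}\norm{\theta}_{L^3}^2\,dt$, an average of $\norm{\theta}_{L^3}^2$ that is not small. These factors compound over the $m=\tau/\delta$ strips, so the bound is not uniform in $m$.

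The paper takes essentially your first fallback. It asserts $\norm{\theta(s)}_{L^3}\le C$, which does not follow from $\theta\in L^2(0,\tau;\dot{H}^1)$. It then absorbs by requiring $1-2(C_1M^2\norm{f}_{\dot{H}^{-1}}^2+C_2)>0$. But the $C_2$-part of this coefficient comes from $\norm{\theta}_{L^3}$ and is independent of $f$, so shrinking $f$ cannot arrange it. The paper therefore does not supply the missing estimate either.

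The missing idea is to regularize so that the cancellation survives. Let $V_\theta$ transport $\omega_m$ itself and mollify at most $V_{\omega_m}$, as in Caffarelli--Kohn--Nirenberg, where the mollifier sits on the advecting field. That is, use $\partial_t\omega_m+V_\theta\cdot\nabla\omega_m+\psi_\delta[V_{\omega_m}]\cdot\nabla\theta_{st}=\Delta\omega_m$. Then $\int(V_\theta\cdot\nabla\omega_m)\,\omega_m\,dx=0$. After moving the gradient onto $\omega_m$, the remaining term is bounded by $C\norm{\theta_{st}}_{L^3}\norm{\nabla\psi_\delta[V_{\omega_m}]}_{L^2}\norm{\nabla\omega_m}_{L^2}\le C(M\norm{f}_X)^{1/2}\norm{\nabla\psi_\delta[V_{\omega_m}]}_{L^2}\norm{\nabla\omega_m}_{L^2}$. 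Lemma \ref{lm: existencia 2} and \eqref{eq 6: presion} let you absorb this for $f$ small; this is the legitimate use of the smallness.

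In fact, since $\theta$ is given, \eqref{5.0.2} is linear in $\omega$, so no retarded mollifier is needed at all. A Galerkin scheme on balls, as in Theorem \ref{th1}, has the same cancellation. If $\omega_0=\theta_0-\theta_{st}$, then $\omega=\theta-\theta_{st}$ satisfies the weak formulation directly.

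With uniform bounds, your Steps 2 and 3 go through. Note, however, that duality then gives only $\partial_t\omega_m\in L^{4/3}(0,\tau;\dot{H}^{-1})$, since $\norm{V_\theta\omega_m}_{L^2}\le C\norm{\theta}_{L^3}\norm{\nabla\omega_m}_{L^2}\in L^{4/3}_t$. This falls short of the $L^2(0,\tau;\dot{H}^{-1})$ regularity required in the paper's definition of weak solution; the paper's $L^2$ bound again rests on the pointwise bound for $\norm{\theta(t)}_{L^3}$. You should either weaken that requirement or state the weaker regularity explicitly.
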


\begin{proof}
    Let $\llaves{\omega_m}_{m \in \N}$ be the sequence of solutions defined in \eqref{eq 8: existencia}. Multiplying \eqref{eq 7: existencia} by $\omega_m$, integrating by parts over space, and integrating in time we obtain

\begin{equation} \label{eq 12: existencia}
\begin{split}
    \frac12 \norm{\omega_m(t)}_{L^2} + \int_{0}^{t} \norm{\nabla \omega_m (s)}_{L^2}^{2} ds &= \norm{\omega_0}_{L^2} - \int_{0}^{t} \int_{\RR^3}(\psi_{\delta}[V_{\omega_{m}}] \cdot \nabla \theta_{st}) \omega_m \ dx\ ds\\
    & \hspace{1,5 cm}- \int_{0}^{t} \int_{\RR^3}(V_{\theta}\cdot \nabla \psi_{\delta}[\omega_{m}]) \omega_m \ dx\ ds.
\end{split}
\end{equation}
By Hölder Inequality, Sobolev Embedding  and  Lebesgue Interpolation we have

\begin{equation} \label{eq 13: existencia}
\begin{split}
 - \int_{0}^{t} \int_{\RR^3}(\psi_{\delta}[V_{\omega_{m}}] \cdot \nabla \theta_{st}) \omega_m \ dx\ ds & \leq 
  \int_{0}^{t} \norm{\psi_{\delta}[V_{\omega_{m}}]}_{L^6}  \norm{\theta_{st}}_{L^3}  \norm{\nabla \omega_m}_{L^2}\\
 &\leq \int_{0}^{t} C \norm{\nabla \psi_{\delta}[V_{\omega_{m}}]}_{L^2}  \norm{\theta_{st}}_{L^3}  \norm{\nabla \omega_m}_{L^2} ds\\ 
 &\leq \int_{0}^{t} C \norm{\nabla \psi_{\delta}[V_{\omega_{m}}]}_{L^2} \norm{\theta_{st}}_{L^2}^{\frac 12} \norm{\nabla \theta_{st}}_{L^2}^{\frac 12}  \norm{\nabla \omega_m}_{L^2} ds\\ 
 &\leq \int_{0}^{t} C M \norm{f}_{\dot{H}^{-1}} \norm{\nabla \psi_{\delta}[V_{\omega_{m}}]}_{L^2}  \norm{\nabla \omega_m}_{L^2} ds,\\ 
\end{split}
\end{equation}
where we have used Theorem \ref{existenciaestacionaria}. Analogously, by  \eqref{remark2} we have that $         \norm{V_{\theta}}_{L^p} \leq C  \norm{\theta(t)}_{L^p}$ for $1< p < \infty$, then

\begin{equation*} 
\begin{split}
 - \int_{0}^{t} \int_{\RR^3}(V_{\theta} \cdot \nabla\psi_{\delta}[\omega_{m}]) \omega_m &\ dx\ ds \leq  \int_{0}^{t} \norm{V_{\theta}}_{L^3}  \norm{\nabla\psi_{\delta}[\omega_{m}]}_{L^2}\norm{\omega_m(s)}_{L^6} \ ds\\
   &\leq  \int_{0}^{t} C \norm{ \theta(s)}_{L^3} \norm{\nabla \psi_{\delta}[\omega_{m}]}_{L^2} \norm{\nabla \omega_m(s)}_{L^2} \ ds\\
   & \leq \int_{0}^{t} C \norm{ \theta(s)}_{L^2}^{\frac 12} \norm{\nabla \theta(s)}_{L^2}^{\frac 12}  \norm{\nabla \psi_{\delta}[\omega_{m}]}_{L^2}\norm{\nabla \omega_m(s)}_{L^2} \ ds.
\end{split}
\end{equation*}
Given that $\theta \in C([0, \tau]; L^2(\RR^3)) \cap L^2([0, \tau]; H^{1}(\RR^3))$ for $\tau >0$, there exists $t_0 \in (0, t)$ that depends on $\theta_0$ such that 

\begin{equation} \label{estimativa_theta}
  \norm{\theta(s)}_{L^3}\leq C \norm{\nabla \theta(t)}_{L^2}^{\frac 12} \norm{\theta(t)}_{L^2}^{\frac 12} < C \hspace{0,5 cm} \forall \ s \in (t_0 , t),
\end{equation}
which implies that

\begin{equation} \label{eq 14: existencia}
     - \int_{0}^{t} \int_{\RR^3}(V_{\theta} \cdot \nabla\psi_{\delta}[\omega_{m}]) \omega_m \ dx\ ds \leq C \int_{0}^{t} \norm{\nabla \psi_{\delta}[\omega_{m}]}_{L^2}\norm{\nabla \omega_m(s)}_{L^2} \ ds.
\end{equation}
From  \eqref{eq 13: existencia}, \eqref{eq 14: existencia} and by Young inequality for products it follows that we estimate the right hand of \eqref{eq 12: existencia} through

\begin{equation} \label{eq 14.1: existencia}
    \begin{split}
          &-  \int_{0}^{t}  \int_{\RR^3}(\psi_{\delta}[V_{\omega_{m}}] \cdot \nabla \theta_{st}) \omega_m \ dx\ ds - \int_{0}^{t} \int_{\RR^3}(V_{\theta} \cdot \nabla\psi_{\delta}[\omega_{m}]) \omega_m \ dx\ ds\\
         &\leq C\int_{0}^{t}  M \norm{f}_{\dot{H}^{-1}} \norm{\nabla \psi_{\delta}[V_{\omega_{m}}]}_{L^2}  \norm{\nabla \omega_m(s)}_{L^2} ds + C\int_{0}^{t} \norm{\nabla \psi_{\delta}[\omega_{m}]}_{L^2}\norm{\nabla \omega_m(s)}_{L^2} \ ds\\
         & = C\int_{0}^{t} \parentesis{ M \norm{f}_{\dot{H}^{-1}} \norm{\nabla \psi_{\delta}[V_{\omega_{m}}]}_{L^2} +   \norm{\nabla \psi_{\delta}[\omega_{m}]}_{L^2}}  \norm{\nabla \omega_m(s)}_{L^2} \ ds\\
         & \leq \frac C2 \int_{0}^{t} \parentesis{M \norm{f}_{\dot{H}^{-1}} \norm{\nabla \psi_{\delta}[V_{\omega_{m}}]}_{L^2} + C  \norm{\nabla \psi_{\delta}[\omega_{m}]}_{L^2}}^2 ds + \frac12 \int_{0}^{t} \norm{\nabla \omega_m(s)}_{L^2}^{2} \ ds\\
           & \leq  C M^2 \norm{f}_{\dot{H}^{-1}}^2 \int_{0}^{t}  \norm{\nabla \psi_{\delta}[V_{\omega_{m}}]}_{L^2}^2 \ ds + C \int_{0}^{t}  \norm{\nabla \psi_{\delta}[\omega_{m}]}_{L^2}^2\ ds + \frac12 \int_{0}^{t} \norm{\nabla \omega_m(s)}_{L^2}^{2} \ ds.
    \end{split}
\end{equation}
Plugging \eqref{eq 14.1: existencia} into \eqref{eq 12: existencia}  we obtain

\begin{equation}  \label{eq 14.2: existencia}
\begin{split}
     \frac12 \norm{\omega_m(t)}_{L^2} + \frac12 \int_{0}^{t} \norm{\nabla \omega_m (s)}_{L^2}^{2} ds
 \leq \norm{\omega_0}_{L^2}^2 &+ C M^2 \norm{f}_{\dot{H}^{-1}}^2 \int_{0}^{t}  \norm{\nabla \psi_{\delta}[V_{\omega_{m}}]}_{L^2}^2 ds\\
 & \hspace{1 cm}+ C\int_{0}^{t} \norm{\nabla \psi_{\delta}[\omega_{m}]}_{L^2}^2 \ ds.\\
\end{split}
\end{equation}
By Lemma \ref{lm: existencia 2} and \eqref{eq 6: presion} we have that

\begin{equation*}
    \int_{0}^{t} \norm{\nabla \psi_{\delta}[V_{\omega_{m}}](s)}_{L^2}^2 \ ds \leq C_1 \int_{0}^{t} \norm{\nabla \omega_m (s)}_{L^2}^2 ds,
\end{equation*}
and

\begin{equation*}
    \int_{0}^{t}  \norm{\nabla \psi_{\delta}[\omega_{m}](s)}_{L^2}^2 \ ds \leq C_2  \int_{0}^{t} \norm{\nabla \omega_m(s)}_{L^2}^2 ds.
\end{equation*}
Therefore, 

\begin{equation*} 
      \norm{\omega_m(t)}_{L^2} +  \int_{0}^{t} \norm{\nabla \omega_m (s)}_{L^2}^{2} ds
 \leq 2\norm{\omega_0}_{L^2}^2 + 2(C M^2 \norm{f}_{\dot{H}^{-1}}^2 + C)\int_{0}^{t} \norm{\nabla \omega_m(s)}_{L^2}^2 ds.
\end{equation*}
Let us choose $\norm{f}_{\dot{H}^{-1}}^2$ small enough such that $1- 2(C_1 M^2 \norm{f}_{\dot{H}^{-1}}^2 + C_2) > 0 $. This implies that 

 \begin{equation*}  
\begin{split}
      \norm{\omega_m(t)}_{L^2} +  \int_{0}^{t} \norm{\nabla \omega_m (s)}_{L^2}^{2} ds
 &\leq \frac{2}{C}\norm{\omega_0}_{L^2}^2, 
\end{split}
\end{equation*}
where $C = 1- (2C M^2 \norm{f}_{\dot{H}^{-1}}^2 + C)$. It follows that

 \begin{equation} \label{eq 15: existencia} 
      \sup_{0 \leq t \leq  \tau}  \norm{\omega_m(t)}_{L^2} +  \int_{0}^{\tau} \norm{\nabla \omega_m (s)}_{L^2}^{2} ds
 \leq \frac{2}{C}\norm{\omega_0}_{L^2}^2, 
 \end{equation}
 for all $m \in \N$ and any $\tau >0$. Therefore,   $\llaves{\omega_m}_{m \in \N}$ is uniformly bounded in $L^{\infty}((0,\tau);L^2(\RR^3))\cap L^2((0,\tau); \dot{H}^{1}(\RR^3))$, for any $\tau >0$.

From the weak formulation of \eqref{eq 7: existencia}, the skew-symmetry property of the convective term, and given that $\nabla \cdot V_{\theta} = 0$ and $\nabla \cdot \psi_{\delta}[V_{\omega_{m}}]=0$, it follows that

\begin{equation} \label{eq 16: existencia}
\begin{split}
    \abs{\escalar{\partial_{t} \omega_m(t), \varphi}_{\dot{H}^{-1}, {\dot{H}^{1}}}} \leq \int_{\RR^3}\abs{(V_{\theta} \cdot \nabla \varphi) \psi_{\delta}[\omega_m](t)} \ dx + \int_{\RR^3}\abs{(\psi_{\delta}[V_{\omega_{m}}(t)] \cdot \nabla \varphi) \theta_{st}} \  dx\\ + \abs{\escalar{ \nabla \omega_m(t), \nabla \varphi}_{L^2}}, 
\end{split}   
\end{equation}
for $\varphi \in \dot{H}^{1}(\RR^3)$. By Hölder Inequality, Sobolev Embedding, Lebesgue Interpolation,  and \eqref{estimativa_theta} it follows that

\begin{equation}\label{eq 17: existencia}
\begin{split}
\int_{\RR^3}&\abs{(V_{\theta} \cdot \nabla \varphi) \psi_{\delta}[\omega_m(t)]} \ dx \leq \norm{V_{\theta}}_{L^3} \norm{\nabla \varphi}_{L^2} \norm{\psi_{\delta}d[\omega_m(t)]}_{L^6}\\
&\leq C \norm{V_{\theta}}_{L^3} \norm{\nabla \varphi}_{L^2} \norm{\nabla\psi_{\delta}[\omega_m(t)]}_{L^2}
\leq C \norm{\theta(t)}_{L^3} \norm{\nabla \varphi}_{L^2} \norm{\nabla \psi_{\delta}[\omega_m(t)]}_{L^2}\\
&\leq C \norm{\nabla \varphi}_{L^2} \norm{\nabla \psi_{\delta}[\omega_m](s)}_{L^2},
\end{split}
\end{equation}
for all $ s\in (t_0, t)$. Analogously,

\begin{equation*}
\int_{\RR^3}\abs{(\psi_{\delta}[V_{\omega_{m}}] \cdot \nabla \varphi) \theta_{st}} \  dx \leq C \norm{\nabla \psi_{\delta}[V_{\omega_{m}}]}_{L^2} \norm{\theta_{st}}_{L^3} \norm{\nabla \varphi}_{L^2}.
\end{equation*}
By the Lebesgue Interpolation Theorem  and Theorem \ref{existenciaestacionaria} it follows that

\begin{equation} \label{eq 18: existencia}
\int_{\RR^3}\abs{(\psi_{\delta}[V_{\omega_{m}}] \cdot \nabla \varphi) \theta_{st}} \  dx \leq C M \norm{f}_{\dot{H}^{-1}} \norm{\nabla \psi_{\delta}[V_{\omega_{m}}]}_{L^2}\norm{\nabla \varphi}_{L^2}.
\end{equation}
From Cauchy-Schwarz Inequality we also have

\begin{equation}\label{eq 19: existencia}
\abs{\escalar{ \nabla \omega_m, \nabla \varphi}_{L^2}} \leq \norm{\nabla \omega_m(t)}_{L^2} \norm{\nabla \varphi}_{L^2}.
\end{equation}
Substituting \eqref{eq 17: existencia}, \eqref{eq 18: existencia} and \eqref{eq 19: existencia} into \eqref{eq 16: existencia} yields 

\begin{equation} \label{eq 21: existencia} 
   \norm{\partial_t\omega_m(t)}_{\dot{H}^{-1}} \leq C \norm{\nabla \psi_{\delta}[\omega_m](t)}_{L^2} + CM \norm{f}_{\dot{H}^{-1}} \norm{\nabla \psi_{\delta}[V_{\omega_{m}}](t)}_{L^2}+ \norm{\nabla \omega_m(t)}_{L^2}, 
\end{equation}
and then

\begin{equation} \label{eq 22: existencia} 
\begin{split}
     \int_{0}^{\tau} \norm{\partial_s\omega_m(s)}_{\dot{H}^{-1}}^2 ds &\leq  \frac{C}{2} \int_{0}^{\tau}\norm{\nabla \psi_{\delta}[\omega_m](s)}_{L^2}^{2} \ ds\\
     &   +  C M^2 \norm{f}_{\dot{H}^{-1}}^2\int_{0}^{\tau}  \norm{\nabla \psi_{\delta}[V_{\omega_{m}}](s)}_{L^2}^2 ds
     + \int_{0}^{\tau} \norm{\nabla \omega_m(s)}_{L^2}^{2} \ ds. 
\end{split}
\end{equation}
Substituting \eqref{eq 15: existencia} into \eqref{eq 22: existencia} and applying Lemma \ref{lm: existencia 2}, we obtain

\begin{equation}\label{des_energia_derivada}
 \begin{split}
     \int_{0}^{\tau} \norm{\partial_s\omega_m(s)}_{\dot{H}^{-1}}^2 ds \leq   C M^2 \norm{f}_{\dot{H}^{-1}}^2 \int_{0}^{\tau}\norm{\nabla \omega_m(s)}_{L^2}^{2} \ ds 
      +  \frac{2}{C}\norm{\omega_0}_{L^2}^2
      \leq C\norm{\omega_0}_{L^2}^2,
\end{split}
\end{equation}
for any $\tau >0$. Inequality \eqref{des_energia_derivada} implies that $\llaves{\partial_s\omega_m}$ is uniformly bounded in $L^2([0,\tau]; \dot{H}^{-1}(\RR^3))$. It follows that there is a subsequence of $\llaves{\omega_m}_{m \in \N}$ such that

\begin{align*}
    \omega_m \rightharpoonup \omega \ \text{weakly in }  \ L^{2}(0, \tau; \dot{H}^1(\RR^3)), \hspace{1 cm} \partial_s \omega_m \rightharpoonup \partial_t \omega\ \text{weakly in } \ L^2(0,\tau; \dot{H}^{-1}(\RR^3)), 
\end{align*}
and
\begin{equation*}
    \omega_m \rightharpoonup \omega \ \text{weakly-$\ast$ in }  \ L^{\infty}(0,\tau; \dot{H}^1(\RR^3)),
\end{equation*}
for all $\tau > 0$. Notice that for any $r >0$ the sequence $\llaves{\omega_{m}\mid_{B_r}}_{m \in \N}$, with $B_r= B(0,r)$, satisfies the hypothesis of  the Aubin-Lions Lemma (see Theorem 4.3, Chapter IV in  Robinson, Rodrigo, and Sadowski \cite{Robinson_Rodrigo_Sadowski_2016}), then 

\begin{equation*}
    \omega_m \to \omega \hspace{0,5 cm} \text{strongly in}  \ L^{2}(0, \tau; L^2(B_r)), \hspace{0,5 cm} \forall r>0.
\end{equation*}
So, 

\begin{equation*}
    \omega_m \to \omega \hspace{0,5 cm} \text{strongly in }  \ L^{2}(0,  \tau; L_{loc}^2(\RR^3)).
\end{equation*}
Particularly, given $\varphi \in C^{\infty}_c (\RR^3)$ we have 

\begin{equation*}
    \omega_m \to \omega \hspace{0,5 cm} \text{strongly in }  \ L^{2}(0,  \tau; L^2(\hat{\Omega})),
\end{equation*}
where $\hat{\Omega}= \text{supp}(\varphi)$. Since strong convergence implies weak convergence, we have that

\begin{equation*}
    \omega_m  \rightharpoonup \omega \ \text{weakly in }  \ L^{2}(0,  \tau; L^2(\hat{\Omega})),
\end{equation*}
for all $\tau > 0$.

We now pass to  the limit in

\begin{equation*}
    \begin{split}
       \int_0^{\tau} \escalar{ \partial_t \omega_m(t), \varphi(t)}_{\dot{H}^{-1}, \dot{H}^{1}} &\ dt +  \int_0^{\tau}\int_{\RR^3} (V_\theta \cdot \nabla  \psi_{\delta}[\omega_m](t))\varphi(t) \ dx \ dt\\
       &+ \int_0^{\tau} \int_{\RR^3}(\psi_{\delta}[V_{\omega_m}](t) \cdot \nabla \theta_{st}) \varphi(t)dx \ dt
       =  - \int_{0}^{\tau} \escalar{ \nabla  \omega_m(t), \nabla \varphi(t)}_{L^2} \ dt, \\        
    \end{split}
\end{equation*}
for $\varphi \in C^{\infty}_{c} ([0, \tau)\times \RR^3)$,  to prove that the limit function $\omega$ is in fact a weak solution to \eqref{5.0.2}. We focus on the terms containing the mollified functions, as the convergence of the other terms follows from standard arguments. Let $\varphi \in C^{\infty}_{c} ([0, \tau)\times \RR^3)$, by Cauchy-Schwarz inequality we have

\begin{equation} \label{eq 28: existencia}
    \begin{split}        \int_{0}^{\tau}\int_{\RR^3}\abs{((\psi_{\delta}[V_{\omega_m}](t)-V_{\omega}(t)) \cdot \nabla \theta_{st}) \varphi(t)}dx \ d t\leq C_{\varphi}  \norm{\nabla \theta_{st}}_{L^2}\int_{0}^{\tau}\norm{(\psi_{\delta}[V_{\omega_m}]-V_{\omega})(t)}_{L^2}  \ dt,
    \end{split}
\end{equation}
where $ \norm{\nabla \varphi}_{L^{\infty}(0, \tau; L^{\infty}(\RR^3))} < C_{\varphi}$. Notice that

\begin{equation*}
\begin{split}
    \psi_{\delta}[V_{\omega_m}](t)- V_{\omega}(t) &= \int_{\RR} \psi(s) H_{\varepsilon}[\omega_m](x, t- \delta s) \ ds -  H_{\varepsilon}[\omega](x, t)\\
    &= \int_{\RR} \psi(s) H_{\varepsilon}[\omega_m](x, t- \delta s)  \ ds -  H_{\varepsilon}[\omega](x, t)\int_{\RR} \psi(s)\ ds\\
    &= \int_{\RR} \psi(s) \parentesis{H_{\varepsilon}[\omega_m](x, t- \delta s) -  H_{\varepsilon}[\omega](x, t)} \ ds.
   \end{split}
\end{equation*}
Then, by the Minkowski inequality for integrals and the linearity of the operator $H_{\varepsilon}$ follows that

\begin{equation*}
    \norm{ \psi_{\delta}[V_{\omega_m}](t)- V_{\omega}(t)}_{L^2} \leq \int_{\RR} \psi(s) \norm{ H_{\varepsilon}[\omega_m(x, t- \delta s)-  \omega(x, t)]}_{L^2} \ ds.
\end{equation*}
By \eqref{eq 5: presion} we get

\begin{equation} \label{eq 28.1: existencia}
\norm{\psi_{\delta}[V_{\omega_m}](t)-  V_{\omega}(t)}_{L^2} \leq C \int_{\RR} \psi(s) \norm{\omega_m(t- \delta s) - \omega(t)}_{L^2} ds.
\end{equation}
Plugging \eqref{eq 28.1: existencia} into  \eqref{eq 28: existencia}, Fubini's Theorem  implies that

\begin{align*}
\int_{0}^{\tau}\int_{\RR^3}\abs{((\psi_{\delta}[V_{\omega_m}]-V_{\omega}) \cdot \nabla \theta_{st}) \varphi(t)} dx \ &ds \leq  C_{\varphi} \norm{\nabla \theta_{st}}_{L^2} \int_{0}^{\tau} \int_{\RR}\psi(s) \norm{\omega_m(t- \delta s) - \omega(t)}_{L^2} ds \ dt\\
&\leq  C_{\varphi} \norm{\nabla \theta_{st}}_{L^2} \int_{0}^{\tau} \int_{\RR} \psi(s) \norm{\omega_m(t- \delta s) - \omega(t)}_{L^2} ds \ dt\\
&\leq  C_{\varphi} \norm{\nabla \theta_{st}}_{L^2} \int_{\RR} \psi(s) \int_{0}^{\tau}  \norm{\omega_m(t- \delta s) - \omega(t)}_{L^2} dt \ ds,\\
\end{align*}
where $C_{\varphi}$ is a constant depending on $\varphi$ and on the constant given by  \eqref{eq 5: presion}. Then 

\begin{equation} \label{des_triangular}
\begin{split}
\int_{0}^{\tau}\int_{\RR^3}&\abs{((\psi_{\delta}[V_{\omega_m}](t)-V_{\omega}(t)) \cdot \nabla \theta_{st}) \varphi(t)}dx \ ds\\
&\leq C  \norm{\nabla \theta_{st}}_{L^2} \int_{\RR} \psi(s) \int_{0}^{\tau}  \norm{\omega_m(t- \delta s) - \omega(t- \delta s)}_{L^2} + \norm{\omega (t- \delta s) - \omega(t)}_{L^2} dt \ ds\\
& =  C  \norm{\nabla \theta_{st}}_{L^2} \int_{\RR} \psi(s) \int_{0}^{\tau}  \norm{\omega_m(t- \delta s) - \omega(t- \delta s)}_{L^2} dt \ ds\\
& \hspace{5 cm}+ C  \norm{\nabla \theta_{st}}_{L^2}\int_{\RR} \psi(s) \int_{0}^{\tau} \norm{\omega (t- \delta s) - \omega(t)}_{L^2} dt \ ds.
\end{split}
\end{equation}
Recalling that $\delta= \frac{\tau}{m}$, let us define the following sequences

\begin{equation*}
\begin{split}
     \Gamma_{m,1} &=  \psi(s)\int_{0}^{\tau}  \norm{\omega_m(t- \delta s) - \omega(t- \delta s)}_{L^2} dt,\\
     \Gamma_{m,2} &=  \psi(s) \int_{0}^{\tau} \norm{\omega \parentesis{t- \frac{\tau}{m} s} - \omega(t)}_{L^2} dt.
\end{split}   
\end{equation*}
Since $ \omega_m \to \omega$ strongly in   $L^{2}(0,  \tau; L^{2}_{loc}(\RR^3))$, the Hölder Inequality implies that

\begin{equation*}
    \int_{0}^{\tau}  \norm{\omega_m(t- \delta s) - \omega(t- \delta s)}_{L^2(\hat{\Omega})} dt \leq \tau \psi(s)\parentesis{\int_{0}^{\tau}  \norm{\omega_m(t- \delta s) - \omega(t- \delta s)}_{L^2(\hat{\Omega})}^{2} dt}^{\frac12}.
\end{equation*}
where $\hat{\Omega} = \text{supp} \ \varphi$. Thus $\Gamma_{m,1} \to 0$ as $m \to \infty$ pointwise. Given that  $\Gamma_{m,1}$ is bounded in $L^{\infty}(0, \tau; L^2(\RR^3))$,  it follows that

\begin{equation} \label{des_triangular_est_1}
   \lim_{m\to \infty}\int_{\RR} \psi(s)\int_{0}^{\tau}  \norm{\omega_m(t- \delta s) - \omega(t- \delta s)}_{L^2(\hat{\Omega})} dt = 0.
\end{equation}
On the other hand, since  the norm and the translations are continuous in $L^2(0, \tau; L^{2}_{loc}(\RR^3))$, the limit $\dfrac{\tau}{m} \to 0$, as $m \to \infty$ implies that 

\begin{equation*}
    \lim_{m\to \infty} \psi(s) \parentesis{\int_{0}^{\tau} \norm{\omega \parentesis{t- \frac{\tau}{m} s} - \omega(t)}_{L^2(\hat{\omega})}^{2} dt}^{\frac12} = 0.
\end{equation*}
By Hölder Inequality it follows that

\begin{equation*}
    \int_{0}^{\tau}  \norm{\omega \parentesis{t- \frac{\tau}{m} s} - \omega(t)}_{L^2} dt \leq \tau \psi(s)\parentesis{\int_{0}^{\tau}  \norm{\omega \parentesis{t- \frac{\tau}{m} s} - \omega(t)}_{L^2(\hat{\Omega})}^{2} dt}^{\frac12}.
\end{equation*}
Thus, $\Gamma_{m,2} \to 0$ as $m \to \infty$ pointwise.

Given that  $\Gamma_{m,2}$ is bounded in $L^{\infty}(0, \tau; L^2(\RR^3))$, by the  Dominated Convergence Theorem we have that

\begin{equation} \label{des_triangular_est_2}
   \lim_{m\to \infty}\int_{\RR} \psi(s)\int_{0}^{\tau}  \norm{\omega \parentesis{t- \frac{\tau}{m} s} - \omega(t)}_{L^2(\hat{\Omega})} dt = 0.
\end{equation}
By \eqref{des_triangular_est_1} and \eqref{des_triangular_est_2}, passing to the limit in \eqref{des_triangular} implies that

\begin{equation*}
    \lim_{m\to\infty} \int_{0}^{\tau}\int_{\RR^3}(\psi_{\delta}[V_{\omega_m}](t) \cdot \nabla \theta_{st}) \varphi(t)dx \ ds = \int_{0}^{\tau}\int_{\RR^3}(\psi_{\delta}[V_{\omega}] \cdot \nabla \theta_{st}) \varphi(t)dx \ ds.
\end{equation*}
Proceeding analogously, we have that

\begin{equation*}
    \lim_{m\to\infty} \int_{0}^{\tau}\int_{\RR^3}(V_{\theta}\cdot \psi_{\delta}[\omega_m]) \varphi(t)dx \ ds = \int_{0}^{\tau}\int_{\RR^3}(V_{\theta}\cdot \omega(t)) \varphi(t)dx \ ds.
\end{equation*}
Therefore,

\begin{equation*}
    \begin{split}
      \int_0^{\tau} &\escalar{ \partial_t \omega(t), \varphi(t)}_{\dot{H}^{-1}, \dot{H}^{1}} dt +  \int_0^{\tau} \int_{\RR^3} (V_\theta \cdot \nabla  \omega(t))\varphi(t) dx \ dt
       +  \int_0^{\tau} \int_{\RR^3}(V_{\omega} \cdot \nabla \theta_{st}) \varphi(t)dx \ dt\\
       &=\lim_{m\to\infty} \int_0^{\tau} \escalar{ \partial_t \omega_m(t), \varphi(t)}_{\dot{H}^{-1}, \dot{H}^{1}} \ dt + \int_0^{\tau}\int_{\RR^3} (V_\theta \cdot \nabla  \psi_{\delta}[\omega_m](t))\varphi(t) dx \ dt\\
       & \hspace{7 cm}+ \int_0^{\tau}\int_{\RR^3}(\psi_{\delta}[V_{\omega_m}] \cdot \nabla \theta_{st}) \varphi(t)dx \ dt\\
       & =  - \lim_{m\to\infty}\int_{0}^{\tau} \escalar{ \nabla  \omega_m(t), \nabla \varphi(t)}_{L^2} \ dt
       = - \int_{0}^{\tau} \escalar{ \nabla  \omega(t), \nabla \varphi(t)}_{L^2} \ dt,  
    \end{split}
\end{equation*}
for all $\varphi \in C^{\infty}_{c}(\RR^3 \times \RR)$. We conclude that there exists a weak solution $\omega$ to \eqref{5.0.2}.

By the weak lower semi-continuity of the norm in Banach spaces, together with the weak convergence of the sequence $\llaves{\omega_m}_{m \in \N}$, it follows from \eqref{eq 15: existencia} and \eqref{des_energia_derivada}, that

\begin{equation*}
     \sup_{0 \leq t \leq  \tau}  \norm{\omega(t)}_{L^2} +  \int_{0}^{\tau} \norm{\nabla \omega (s)}_{L^2}^{2} ds,
 \leq C\norm{\omega_0}_{L^2}^2
\end{equation*}
and

\begin{equation*}
      \int_{0}^{\tau} \norm{\partial_s\omega(s)}_{\dot{H}^{-1}}^2 ds \leq C \norm{\omega_0}_{L^2}^2.
\end{equation*}
Consequently, 

 \begin{equation*}
\omega \in L^{\infty}(0, \tau; L^{2}(\RR^3)) \cap L^{2}(0, \tau; \dot{H}^{1}(\RR^3)), \hspace{0,2 cm} \partial_t\omega \in L^2(0, \tau; \dot{H}^{-1}(\RR^3)),
\end{equation*}
for $\tau > 0$.
\end{proof}

\subsection{Decay properties and Asymptotic behavior}

In this section, we analyze the asymptotic behavior of the weak solution $\omega$ to \eqref{5.0.2}. Our approach employs the  Fourier Splitting Method developed by  Schonbek in \cite{ME1980, ME1985}. 

\begin{definition} \label{def_asymptotically}

A solution $\theta_{st}\in \dot{H}^{1}(\RR^3)$ to \eqref{Estacionaria} with $f \in \dot{H}^{-1}(\RR^3)$, as established in Theorem \ref{existenciaestacionaria}, is called  {\it asymptotically stable}, if 
     \begin{equation*}
         \lim_{t \to \infty}  \norm{\theta(t)- \theta_{st}}_{L^2} =0,
     \end{equation*} 
for any solution $\theta (t)$ to \eqref{Parabolica}.      
\end{definition}

We will prove that, in fact, the $L^2$ norm of the solution  $\omega$  to \eqref{5.0.2} is bounded by $(1+t)^{- \alpha}$, where $\alpha$ depends on the decay characters of $\theta_0$ and $\theta_{st}$. From this will follows that $\theta_{st}$ is asymptotically stable. This is the content of Theorem \ref{Teorema_Decay_Asintotica}, which we now prove.

\begin{proof} [Proof of Theorem \ref{Teorema_Decay_Asintotica}]

Theorem \ref{teorema_soluciones_debiles_decay} implies that 

 \begin{equation} \label{eq regularidad}
\omega \in L^{\infty}(0, \tau; L^{2}(\RR^3)) \cap L^{2}(0, \tau; \dot{H}^{1}(\RR^3)), \hspace{0,2 cm} \partial_t\omega \in L^2(0, \tau; \dot{H}^{-1}(\RR^3)).
\end{equation}
Thus,  $\partial_t \omega(t), \nabla \omega(t) \in \dot{H}^{-1}(\RR^3)$ for a.e. $t \in [0, \tau]$ and, since $\dot{H}^{-1}(\RR^3) \subset \mathcal{D'}(\RR^3)$, we can interpret these linear functionals as distributions. In particular, we observe that

\begin{equation*}
   \escalar{\Delta \omega(t), \varphi}_{\dot{H}^{-1}, \dot{H}^{1}}= \escalar{\nabla \cdot \nabla \omega(t), \varphi}_{\dot{H}^{-1}, \dot{H}^{1}} = -\escalar{ \nabla \omega(t), \nabla \varphi}_{\dot{H}^{-1}, \dot{H}^{1}}.  
\end{equation*}
It follows that $\Delta \omega(t) \in \dot{H}^{-1}(\RR^3)$ for a.e. $t \in[0, \tau]$. Moreover, since $\dot{H}^{1}(\RR^3)$ is a Hilbert space, by the Riesz-Fréchet Theorem we can make the following identification

\begin{equation*}
    -\escalar{ \nabla \omega(t), \nabla \varphi}_{\dot{H}^{-1}, \dot{H}^{1}} = -\escalar{ \nabla \omega(t), \nabla \varphi}_{L^2},
\end{equation*}
for a.e $t \in [0, \tau]$ and for all $\varphi \in \dot{H}^{1}(\RR^3)$. 
 
As 

 \begin{equation*}
     V_\theta(t) \cdot\nabla \omega + V_{\omega}(t) \cdot \nabla \theta_{st} = \Delta \omega(t)- \partial_{t} \omega(t)
 \end{equation*}
 we have that  $ V_\theta(t) \cdot\nabla \omega + V_{\omega}(t) \cdot \nabla \theta_{st} \in \dot{H}^{-1}(\RR^3)$ for a.e $t \in [0, \tau]$, where

 \begin{equation*}
     \escalar{ V_\theta(t) \cdot\nabla \omega + V_{\omega}(t) \cdot \nabla \theta_{st}, \varphi}_{\dot{H}^{-1}, \dot{H}^{1}} = \int_{\RR^3} (V_\theta(t) \cdot\nabla \omega(t)) \varphi \ dx +\int_{\RR^3}  (V_{\omega}(t) \cdot \nabla \theta_{st}) \varphi \ dx. 
 \end{equation*}
 Now, as $\omega \in L^{2}(0,\tau; \dot{H}^{1}(\RR^3))$, we have

 \begin{equation} \label{formulaicion_funcional} 
 \begin{split}
   \escalar{\partial_t \omega(t), \omega(t)}_{\dot{H}^{-1}, \dot{H}^{1}} = -\escalar{ \nabla \omega(t), \nabla \omega(t)}_{L^2} &- \int_{\RR^3} (V_{\theta}(t)\cdot \nabla \omega(t) ) \omega(t) \  dx\\
   & \hspace{2 cm}- \int_{\RR^3}  (V_{\omega}(t)\cdot \nabla \theta_{st}) \omega(t) \ dx,  
 \end{split}
\end{equation}
for a.e $t \in( 0, \tau]$ and any $\tau >0$.

Let us recall that the spaces $L^2(\RR^3)$, $\dot{H}^{1}(\RR^3)$ and $\dot{H}^{-1}(\RR^3)$ form a Hilbert triplet. From \eqref{eq regularidad} and Theorem 100 in section 7.12 of Salsa and Verzini \cite{salsa_libro} we obtain

\begin{equation} \label{integracion_partes_eq}
    \int_{s}^{t} \escalar{\partial_r\omega(r), v(r)}_{\dot{H}^{-1}, \dot{H}^{1}} + \escalar{\partial_r v(r), \omega(r)}_{\dot{H}^{-1}, \dot{H}^{1}}  \ dr = \escalar{\omega(t),v(t)}_{L^2} - \escalar{\omega(s),v(s)}_{L^2},
\end{equation}
for any $v \in L^{2}(0, \tau; \dot{H}^{1}(\RR^3)), \hspace{0,2 cm} \partial_tv\in L^2(0, \tau; \dot{H}^{-1}(\RR^3))$.
Notice that, by  Hölder Inequality it follows 

\begin{equation*}
\begin{split}
    \int_{0}^{\tau} &\abs{\escalar{\partial_r\omega(r), v(r)}_{\dot{H}^{-1}, \dot{H}^{1}} + \escalar{\partial_r v(r), \omega(r)}_{\dot{H}^{-1}, \dot{H}^{1}}}  \ dr\\
&\leq \int_{0}^{\tau} \norm{\partial_r\omega(r)}_{\dot{H}^{-1}} \norm{v(r)}_{\dot{H}^{1}} + \norm{\partial_rv(r)}_{\dot{H}^{-1}} \norm{\omega(r)}_{\dot{H}^{1}} \ dr \\
& \leq \parentesis{\int_{0}^{\tau} \ \norm{\partial_r\omega(r)}_{\dot{H}^{-1}}^{2}dr}^{\frac 12}\parentesis{\int_{0}^{\tau} \ \norm{v(r)}_{\dot{H}^{1}}^{2}dr}^{\frac 12}\\
& \hspace{0,5 cm}+ \parentesis{\int_{0}^{\tau} \ \norm{\partial_rv(r)}_{\dot{H}^{-1}}^{2}dr}^{\frac 12}\parentesis{\int_{0}^{\tau} \ \norm{\omega(r)}_{\dot{H}^{1}}^{2}dr}^{\frac 12}.\\
\end{split}
\end{equation*}
Then, $\escalar{\partial_r\omega(r), v(r)}_{\dot{H}^{-1}, \dot{H}^{1}} + \escalar{\partial_r v(r), \omega(r)}_{\dot{H}^{-1}, \dot{H}^{1}} \in L^1(0, \tau)$. Therefore, we can apply the Lebesgue Differentiation Theorem to \eqref{integracion_partes_eq} to obtain 

\begin{equation} \label{integracion_partes_eq_1}
 \frac{d}{dt} \escalar{\omega(t), v(t)}_{L^2} = \escalar{\partial_t\omega(t), v(t)}_{\dot{H}^{-1}, \dot{H}^{1}} + \escalar{\partial_t v(t), \omega(t)}_{\dot{H}^{-1}, \dot{H}^{1}},    
\end{equation}
for a.e $t \in [0, \tau]$ and any $\tau >0$. Taking $v(t)= \omega(t)$ leads to

\begin{equation*}
    \frac12 \frac{d}{dt} \norm{\omega(t)}_{L^2}^{2} = \escalar{\partial_t\omega(t), \omega(t)}_{\dot{H}^{-1}, \dot{H}^{1}},
\end{equation*}
for a.e $t \in [0, \tau]$ and any $\tau >0$. Consequently, we can write \eqref{formulaicion_funcional} as

\begin{equation} \label{FS1}
\frac12 \frac{d}{dt} \norm{\omega(t)}_{L^2}^2 = -  \norm{\nabla \omega(t)}_{L^2}^2 - \int_{\RR^3}  (V_{\theta}(t)\cdot \nabla \omega(t)) \, \omega(t) dx - \int_{\RR^3}  (V_{\omega}(t)\cdot \nabla \theta_{st}) \, \omega(t) dx.
\end{equation}
By Hölder Inequality, Sobolev Embedding Theorem, and \eqref{remark2} we have

\begin{equation} \label{FS2}
- \int_{\RR^3} (V_{\theta}(t)\cdot \nabla \omega(t)) \,  \omega(t) dx \leq C \norm{\theta(t)}_{L^3} \norm{\nabla \omega(t)}_{L^2}^2.
\end{equation}
To estimate the third term in the right hand side, we use  the skew-symmetry property of the convective term,  to obtain

\begin{equation}
\label{FS3}
\begin{split}
- \int_{\RR^3}  (V_{\omega}(t)\cdot \nabla \theta_{st}) \, \omega(t) \, dx &= \int_{\RR^3}  (V_{\omega}(t)\cdot \nabla \omega(t)) \, \theta_{st} \, dx \leq C \norm{\theta_{st}}_{L^{3}} \norm{\omega(t)}_{L^6} \norm{ \nabla \omega(t)}_{L^2}\\
&\leq C \norm{\nabla \theta_{st}}_{L^{2}}^{\frac12} \norm{\theta_{st}}_{L^2}^{\frac12} \norm{\nabla \omega(t)}_{L^2}^2 
\leq CM^{\frac12} \norm{f}_{\dot{H}^{-1}}^{\frac12} \norm{\nabla \omega(t)}_{L^2}^2,
\end{split}
\end{equation}
where we used Hölder Inequality, \eqref{eq 5: presion}, Sobolev Embedding Theorem, Lebesgue Interpolation and Theorem \ref{existenciaestacionaria}.
Replacing \eqref{FS2} and \eqref{FS3} into \eqref{FS1} we have that

\begin{equation} \label{FS4}
\frac12 \frac{d}{dt} \norm{\omega(t)}_{L^2}^2 \leq - ( 1 - C \norm{\theta(t)}_{L^3} - C  M^{\frac12} \norm{f}_{\dot{H}^{-1}}^{\frac12}) \norm{\nabla \omega}_{L^2}^2.
\end{equation}
As $\theta \in C([0, \tau]; L^2(\RR^3)) \cap L^2([0, \tau]; \dot{H}^{1}(\RR^3))$, there is $t_0 \in (0, \tau)$, that depends on $\theta_0$, such that 

\begin{equation*}
   \norm{\theta(t)}_{L^3}\leq C\norm{\nabla \theta(t)}_{L^2}^{\frac 12} \norm{\theta(t)}_{L^2}^{\frac 12} < \infty \hspace{0,5 cm} \forall \hspace{0,2 cm} t \in (t_0 , \tau).
\end{equation*}
Then we can choose $\norm{f}_{\dot{H}^{-1}}^{\frac12}$ small enough such that

\begin{equation*} 
A = 1 - C \norm{\theta(t)}_{L^3} - C  M^{\frac12} \norm{f}_{\dot{H}^{-1}}^{\frac12} >0.
\end{equation*}
We now apply the Fourier Splitting Method. Let $B(t)$ a ball centered at the origin and  with a time-dependent radius $R(t)$. By  Plancherel Theorem we have that 

\begin{align*}
- A &\norm{\fourier{\nabla \omega(t)}}_{L^2}^2 = -A \int_{B(t)\cup B(t)^c} \abs{\xi}^2 \abs{ \fourier{\omega}(\xi,t)}^2 \ d\xi\\
& = - A \int_{B(t)}  \abs{\xi}^2 \abs{ \fourier{\omega}(\xi,t)}^2 \ d\xi - A \int_{ B(t)^c} \abs{\xi}^2 \abs{ \fourier{\omega}(\xi,t)}^2 \ d\xi
\leq - A \int_{ B(t)^c} \abs{\xi}^2 \abs{ \fourier{\omega}(\xi,t)}^2 \ d\xi\\
&\leq - A R^2 \int_{ B(t)^c} \abs{ \fourier{\omega}(\xi,t)}^2 \ d\xi\
 \leq - A R^2 \int_{ \RR^3} \abs{ \fourier{\omega}(\xi,t)}^2 \ d\xi + A R^2 \int_{ B(t)} \abs{ \fourier{\omega}(\xi,t)}^2 \ d\xi.
\end{align*}
So

\begin{equation*}
 \frac12 \frac{d}{dt} \norm{\omega(t)}_{L^2}^{2} + A R^2 \norm{\omega(t)}_{L^2}^2 \leq A R^2 \int_{B(t)} \abs{ \fourier{\omega}(\xi,t)}^2 \ d\xi.
\end{equation*}
We now define the following time-dependent radius

\begin{equation*}
R(t) = \parentesis{\frac{1}{A(1+t)}}^{\frac12},
\end{equation*}
and obtain

\begin{equation} \label{FS5}
\frac12 \frac{d}{dt} \norm{\omega(t)}_{L^2}^{2} +  \frac{1}{(1+t)} \norm{\omega(t)}_{L^2}^2 \leq  \frac{1}{(1+t)}  \int_{B(t)} \abs{ \fourier{\omega}(\xi,t)}^2 \ d\xi.
\end{equation}
From

\begin{equation*}
\frac{d}{dt}\fourier{\omega}(\xi,t)+ i \xi \cdot \fourier{V_\theta(t)\omega(t)} + i \xi \cdot \fourier{V_\omega(t)  \theta_{st}}  = \abs{\xi}^2\fourier{\omega(t)},
\end{equation*}
We have

\begin{equation}\label{FS6}
\fourier{\omega}(\xi,t)= e^{-\abs{\xi}^2t}\fourier{\omega_0}(\xi) + \int_{0}^t i \xi \cdot \fourier{V_\theta(s)\omega(s)} + i \xi \cdot \fourier{V_\omega(s)  \theta_{st}} \ ds,
\end{equation}
and then

\begin{equation} \label{FS7}
\fourier{\omega}(\xi,t)^2 \leq 2 \parentesis{e^{-\abs{\xi}^2t}\fourier{\omega_0}(\xi)}^2 + 2\parentesis{\int_{0}^t i \xi \cdot \fourier{V_\theta(s)\omega(s)} + i \xi \cdot \fourier{V_\omega(s)  \theta_{st}} \ ds}^2.
\end{equation}
Integrating \eqref{FS7}  over $B(t)$ yields

\begin{equation}\label{FS8}
\begin{split}
   \int_{B(t)}\abs{\fourier{\omega}(\xi,t)}^2 \ d\xi &\leq 2 \int_{B(t)} \abs{e^{-\abs{\xi}^2t}\fourier{\omega_0}(\xi)}^2 \ d\xi\\
   & \ \ \ \ + 2 \int_{B(t)}\abs{\int_{0}^t i \xi \cdot \fourier{V_\theta(s)\omega(s)} + i \xi \cdot \fourier{V_\omega(s)  \theta_{st}}\ ds}^2 d\xi. 
\end{split}
\end{equation}
To estimate the second term on the right hand side of \eqref{FS8}, we use Cauchy-Schwarz inequality and \eqref{remark2} to obtain

\begin{align} \label{FS9}
\abs{i \xi \cdot \fourier{V_\theta(s)\omega(s)}}  & \leq \abs{\xi} \abs{\fourier{V_\theta(s)\omega(s)}} \leq \abs{\xi}\norm{\fourier{V_\theta(s)\omega(s)}}_{L^{\infty}} \leq \abs{\xi}\norm{V_\theta(s)\omega(s)}_{L^{1}} \notag \\
& \leq \abs{\xi} \norm{V_{\theta}(s)}_{L^2}\norm{\omega(s)}_{L^2} \leq C \abs{\xi} \norm{\theta(s)}_{L^2}\norm{\omega(s)}_{L^2}\leq C \abs{\xi} \norm{\omega(s)}_{L^2}.
\end{align}
Analogously, by Theorem \ref{existenciaestacionaria} it follows that

\begin{equation}\label{FS10}
\abs{i\xi \cdot \fourier{V_\omega(s)  \theta_{st}}}  \leq C \abs{\xi} \norm{\omega(s)}_{L^2}\norm{\theta_{st}}_{L^2} \leq  C M \abs{\xi}\norm{\omega(s)}_{L^2}.
\end{equation}
By \eqref{FS9} and \eqref{FS10}, we obtain

\begin{equation} \label{FS11}
\abs{ i \xi \cdot \fourier{V_\theta(s)\omega(s)} + i \xi \cdot \fourier{V_\omega(s)  \theta_{st}}}^2  \leq C \abs{\xi}^2 \norm{\omega(s)}_{L^2}^2.
\end{equation}
Applying Jensen's inequality together with \eqref{FS11} to the second term on the right hand side of \eqref{FS8} we obtain

\begin{equation*}
\begin{split}
\abs{\int_{0}^{t} i \xi \cdot \widehat{V_\theta(s)\omega(s)} + i \xi \cdot \widehat{V_\omega(s) \theta_{st}}   \ ds}^2  &\leq t \int_0^t \abs{ i \xi \cdot \widehat{V_\theta(s)\omega(s)} + i \xi \cdot \widehat{V_\omega(s)\theta_{st}}}^2 \ ds \\ & 
\leq C  \abs{\xi}^2 t \int_{0}^t \norm{\omega(s)}_{L^2}^2 \ ds.
\end{split}
\end{equation*}
This yields

\begin{equation} \label{FS12}
 \int_{B(t)}\abs{\int_{0}^t i \xi \cdot \fourier{V_\theta(s)\omega(s)} + i \xi \cdot \fourier{V_\omega(s)  \theta_{st}} \ ds}^2 d\xi\leq C\parentesis{\int_{B(t)}\abs{\xi}^2 \ d\xi}  t \int_{0}^t \norm{\omega(s)}_{L^2}^2 \ ds.
\end{equation}
From  Theorem \ref{th2.5} it follows that

\begin{equation} \label{FS13}
    \int_{B(t)} e^{- 2 \abs{\xi}^2t}\abs{\fourier{\omega_0}(\xi)}^2 d\xi \leq \norm{e^{- \abs{\xi}^2t}\fourier{\omega_0}(\xi)}_{L^2}^2 \leq C (1+t)^{-\frac{3}{2} - r^{*}}.
\end{equation}
Using \eqref{FS12} and \eqref{FS13} in \eqref{FS8} we get

\begin{equation}\label{FS14}
\int_{B(t)}\abs{\fourier{\omega}(\xi,t)}^2 \ d\xi \leq C (1+t)^{-\frac 32 - r^{*}} +   C\parentesis{\int_{B(t)}\abs{\xi}^2 \ d\xi}  t \int_{0}^t \norm{\omega(s)}_{L^2}^2 \ ds.
\end{equation}
Now we choose $\beta > \max \llaves{ \frac{5}{2} + r^{*}, \frac{5}{2}}$  and use the integrating factor $h(t)=(1+t)^{\beta}$ to turn \eqref{FS5} into

\begin{equation*}
\frac{d}{dt}\parentesis{\norm{\omega(t)}_{L^2}^{2}(1+t)^{\beta}} \leq C(1+t)^{\beta-1}\parentesis{(1+t)^{-\frac32-r^{*}}+(1+t)^{-\frac32}\int_{0}^{t}\norm{\omega(s)}_{L^2}^{2} \ ds}.    
\end{equation*}
From this we obtain

\begin{equation} \label{FS21}
\norm{\omega(t)}_{L^2}^{2} (1+t)^{\frac 32} \leq  C (1+t)^{- r^{*}} +  C  \int_{0}^t  (1+s)^{-\frac 32} \norm{\omega(s)}_{L^2}^2 (1+s)^{\frac 32} \  ds.
\end{equation}
We can re-write \eqref{FS21} as

\begin{equation*}
X(t) \leq a(t) + \int_0^t b(s) X(s) \ ds,
\end{equation*}
where

\begin{equation*}
\begin{split}
&X(t) = \norm{\omega(t)}_{L^2}^{2} (1+t)^{\frac 32}, \hspace{0,5 cm} a(t)= C (1+t)^{- r^{*}}, \hspace{0,5 cm}
 b(t) = C (1+t)^{-\frac 32}.
\end{split}
\end{equation*}
We consider two cases:  first, we focus on the case  where $r^{*} \leq 0$, which implies that $a(t)$ is a non-decreasing function of $t$. Given that  $a(t), b(t), X(t)$  are continuous on $[0, t]$, it follows from Corollary 1.2. on page 4 of Bainov and Simeonov \cite{bainov1992} that

\begin{equation*}
X(t) \leq a(t) \exp{\int_0^{t}b(s)ds}.
\end{equation*}
Notice that

\begin{equation*}
\int_0^t b(s) \ ds= \int_0^t C (1+s)^{-\frac 32} \ ds  < \infty,
\end{equation*}
 Therefore,

\begin{equation*} 
\norm{\omega(t)}_{L^2}^{2}  \leq C (1+t)^{- \frac 32 - r^{*}}. 
\end{equation*}
Now suppose $r^{*} > 0$. From \eqref{FS21} we obtain 

\begin{equation} \label{FS22}
\norm{\omega(t)}_{L^2}^{2}\leq  C (1+t)^{- \parentesis{ \frac32 + r^{*}}} +  C (1+t)^{-\frac 32}  \int_{0}^t   \norm{\omega(s)}_{L^2}^2 \  ds.
\end{equation}
Then let

\begin{equation*}
\begin{split}
X(t) = \norm{\omega(t)}_{L^2}^{2}, \hspace{0,2 cm} a(t)= C (1+t)^{- \parentesis{\frac32 + r^{*}}}, \hspace{0,2 cm}
 b(t) = (1+t)^{-\frac 32}, \hspace{0,2 cm} k(t)= 1.
\end{split}
\end{equation*}
From Theorem 1, page 356 in Mitrinovi\'c, Pe\v{c}ari\'c, and Fink \cite{fink2013} we obtain

\begin{equation*}
    X(t) \le a(t) + b(t)\int_{0}^{t}a(s)k(s)\exp\parentesis{\int_{s}^{t}b(r)k(r) \ dr} \ ds,
\end{equation*}
which leads to
\begin{equation*}
    \begin{split}
        \norm{\omega(t)}_{L^2}^{2} \leq C(1+t)^{-\parentesis{\frac32 + r^{*}}} + C(1+t)^{- \frac32}\int_{0}^{t} (1+s)^{-\parentesis{\frac 32 + r^{*}}} \ ds \leq C(1+t)^{- \frac 32}. 
    \end{split}
\end{equation*}
Then
\begin{equation*}
    \norm{\omega(t)}_{L^2}^{2} \leq C(1+t)^{- \min\llaves{\frac32, \frac32 + r^{*}}}.
\end{equation*}
As, from Lemma \ref{suma_decay_character} we have that  $r^{*}(\omega_0) \leq \min\llaves{r^{*}(\theta_0), r^{*}(\theta_{st})}$, we proved our result.

\end{proof}

\bibliographystyle{plain}
\bibliography{bibliografia}

\end{document}